\providecommand\@dotsep{5}
\def \a{\alpha}
\def \ga{\gamma}
\def\dga{{\dot{\gamma}}}
\def \Ga{\Gamma}
\def \de{\delta}
\def \e{\varepsilon}
\def\deta{{\dot{\eta}}}
\def \la{\lambda}
\def \La{\Lambda}
\def\dxi{{\dot{\xi}}}
\def \vr{\varphi}
\def\vrt{\vartheta}
\def\th{\theta}
\def \om{\omega}
\def \Om{\Omega}
\def\Si{\Sigma}
\def \re{{\mathbb R}}
\def \na{{\mathbb N}}
\def \Z {{\mathbb Z}}
\def \lv{\left\vert}
\def \rv{\right\vert}
\def \lV{\left\Vert}
\def \rV{\right\Vert}
\def\bB{{\mathbf B}}
\def\B{{\mathbb B}}
\def\E{{\mathbb E}}
\def\bE{{\mathbf E}}
\def\cE{{\mathcal E}} 
\def\F{{\mathbb F}}
\def\cF{{\mathcal F}} 
\def\H{{\mathbb H}}
\def\cH{{\mathcal H}} 
\def\cK{{\mathcal K}}
\def\K{{\mathbb K}}
\def\cL{{\mathcal L}} 
\def\N{{\mathbb N}}
\def\cN{{\mathcal N}} 
\def\P{{\mathbb P}}
\def\bbS{{\mathbb S}}
\def\V{{\mathbb V}}
\def\dw{{\dot{w}}}
\def\dx{{\dot{x}}}
\def\X{{\mathbb X}}
\def\HT{{\cH^1_{[0,T]}}}
\def\0{\{0\}}
\def\b0{{\mathbf 0}}
\DeclareMathOperator{\graph}{Graph}
\DeclareMathOperator{\image}{Image}
\DeclareMathOperator{\isom}{Isom}
\DeclareMathOperator{\rank}{rank}
\theoremstyle{plain}
 \newtheorem{MainThm}{Theorem}
 \newtheorem{MainCor}[MainThm]{Corollary}
 \newtheorem{mainthm}{Theorem}
 \newtheorem{maincor}[mainthm]{Corollary}
\newtheorem{Thm}{\bf Theorem}[section]
\newtheorem{Lemma}[Thm]{\bf Lemma}
\newtheorem{Corollary}[Thm]{\bf Corollary}
\newtheorem{Proposition}[Thm]{\bf Proposition}
\theoremstyle{remark}
\newtheorem{Definition}[Thm]{\bf Definition}
\title{Minimization and hyperbolicity}
\author{Gonzalo Contreras}
\thanks{Gonzalo Contreras was partially supported by {\sc conacyt}, Mexico, grant A1-S-10145.}
\address{CIMAT \\ P.O. Box 402 \\36.000 Guanajuato GTO \\M\'exico.}
\email{gonzalo@cimat.mx}
\author{Daniel Offin}
\address{Department of Mathematics and Statistics\\ Queen's
University\\ Kingston, Canada\\ K7L 4V1}
\email{offind@mast.queensu.ca}
\begin{document}

\maketitle


\section{Introduction}

In this paper we study the relationship between strict locally
minimizing orbits for time dependent lagrangians and hyperbolicity
properties of the corresponding lagrangian flow.

The phase space of a time dependent lagrangian is naturally equipped
with a codimension 1  foliation given by fixing the value of the time
parameter. This foliation is preserved by the lagrangian flow and is
transversal to its vector field. The existence of such foliation
simplifies the arguments in this work. For this reason and for
simplicity of notation we choose to write the proofs in the autonomous
case, giving only the corresponding statements for time dependent
lagrangians.

\subsection{Autonomous Lagrangians.}
\quad

Let $M$ be a closed riemannian manifold and $\pi:TM\to M$ its tangent
bundle. An autonomous {\it lagrangian} on $M$ is a $C^2$ function
$L:TM\to\re$ satisfying the following {\it strict convexity} condition:
\renewcommand{\theenumi}{\alph{enumi}}
\begin{equation}\label{sconvexity}
\exists b>0\qquad
\forall (x,v)\in TM\qquad \forall h\in T_xM\qquad
\tfrac{\partial^2L}{\partial^2
v}(x,v)\cdot(h,h) \ge b\, |h|_x^2.
\end{equation}
Observe that condition~\eqref{sconvexity} implies the following {\it superlinearity} property:
\begin{equation}\label{superlinearity}
\lim_{|v|\to+\infty}\tfrac{L(x,v)}{|v|}=+\infty
\qquad \text{ uniformly on $(x,v)\in TM$.}
\end{equation}

 The {\it Euler-Lagrange equation} which  in local coordinates is 
 \begin{equation}\label{EL}
 \frac{d}{dt}\frac{\partial L}{\partial v}(x,\dx)=
 \frac{\partial L}{\partial x}(x,\dx),
 \end{equation}
 defines a complete flow $\vr$ on $TM$ by $\vr_t(x,v)=(x(t),\dx(t))$,
 where $x(t)$ is a solution of~\eqref{EL} with initial conditions
 $x(0)=x$, $\dx(0)=v$.
 The {\it energy function}
 \begin{equation*}\label{E}
 E(x,v)=\frac{\partial L}{\partial v}\cdot v-L
 \end{equation*}
 is preserved by the Euler-Lagrange flow. By the superlinearity
 property the {\it energy levels} $E^{-1}\{k\}$ are compact
 $\vr_t$-invariant subsets of $TM$.
 
 Given an absolutely continuous curve $\ga:[0,T]\to M$ define the {\it
 action} of $\ga$ by
 $$
 A_L(\ga)=\int_0^TL(\ga(t),\dga(t))\,dt.
 $$
 
 Set 
 $$
 \HT:=\Big\{\,\ga:[0,T]\to M\;\Big|\;\text{$\ga$ is absolutely
 continuous},\,\int_0^T|\dga(s)|^2\,ds<\infty\,\Big\}.
 $$
 Then $\HT$ is a Hilbert manifold whose tangent space at $\ga$ is
 $$
 T_\ga\HT=\Big\{\,\xi:[0,T]\to TM\;\Big|\;
 \pi\circ\xi=\ga,\,\int_0^T|\dxi(s)|^2\,ds<\infty\,\Big\}
 $$
 with the riemannian metric
 $\lV\xi\rV_{\cH^1}:=\lV\xi\rV_{\cL^2}+\Vert\dxi\Vert_{\cL^2}$.
 
 Consider a smooth path $\Ga:[-\e,\e]\to\HT$. Suppose that
 $A_L(\Ga(s))$ is twice differentiable at $s=0$. Differentiating at
 $s=0$ we get the formula for the first variation of the action
 functional:
   \begin{align}
 d_\ga \,A_L\cdot\xi :&= \tfrac{d}{ds}A_L(\Ga(s))\Big\vert_{s=0}
 =\int_0^TL_x\,\xi+L_v\,\dxi\; dt
 \notag\\
 &=\int_0^T\big[L_x-\tfrac{d}{dt}L_v\big]\,\xi\;dt
 + L_v\cdot\xi\,\Big\vert_0^T,
 \label{1var}
 \end{align}
 where $\ga=\Ga(0)$, $\xi$ is the variational vector field
  $\xi(t):=\tfrac{\partial \Ga}{\partial s}(0)(t)$ over $\ga$ and
  $\dxi$ is the covariant derivative of $\xi$ along $\ga$.
  The second variation of the action functional is called the
  {\it index form} and is given by
  \begin{align}\label{index}
  d^2_\ga \,A_L(\xi,\eta):&= I(\xi,\eta)
  :=\int_0^T \big[\dxi\, L_{vv}\,\deta +\dxi\, L_{vx}\,\eta 
  +\xi\, L_{xv}\,\deta  +\xi L_{xx}\eta\big] \;dt,
  \end{align}
  where $L_{xx}$, $L_{xv}$, $L_{vx}$, $L_{vv}$ are bilinear
  operators on $T_{\ga(t)}M$. In local coordinates, these operators
  coincide with the partial derivatives 
  $\frac{\partial L}{\partial v_i \partial v_j}$,
  $\frac{\partial L}{\partial v_i \partial x_j}$,
  $\frac{\partial L}{\partial x_i \partial v_j}$,
  $\frac{\partial L}{\partial v_i \partial v_j}$
  when we use the derivatives
  $\dxi, \deta\in\re^m$ in the local coordinates instead of
  covariant derivatives of the riemannian metric.

  Let $\Si=E^{-1}\{k\}\subset TM$ be an energy level for $L$.
    Let $\La\subset \Si$ be a compact $\vr_t$-invariant set. We say that
  $\La$ is {\it hyperbolic} if there is a splitting
  $T\Si|_\La=E^s\oplus\langle X\rangle\oplus E^u$, where
  $X=\frac{d\;}{dt}\vr_t$ is the vector field of $\vr_t$, such that
  \renewcommand{\theenumi}{\roman{enumi}}
  \begin{enumerate}
  \item\label{Hi} $d\vr_t(E^s(\th))=E^s(\vr_t(\th))$ and 
         $d\vr_t(E^u(\th))=E^u(\vr_t(\th))$ 
	 for all $\th\in \La$, $t\in\re$.
  \item\label{Hii} There exist constants $C,\la>0$, such that
  \begin{align*}
  \lV d\vr_t(\vrt)\rV &\le C\,e^{-\la t} \lV\vrt\rV
  \quad\text{ for all }t\ge 0\text{ and  }\vrt\in E^s,
  \\
  \lV d\vr_{-t}(\vrt)\rV &\le C\,e^{-\la t} \lV\vrt\rV
  \quad\text{ for all }t\ge 0\text{ and  }\vrt\in E^u.
  \end{align*}	 
  
  \end{enumerate} 
 
  Given $\th\in TM$, let $\ga_\th(t):=\pi\circ\vr_t(\th)$ and 
  \begin{equation*}\label{Om}
  \Om(\th,T):=\Big\{\,\xi\in T_{\ga_\th}\HT\;\Big|\;
  \xi(0)=0,\;\xi(T)=0,\;\langle \xi(\tfrac T2),\vr_{\frac
  T2}(\th)\rangle=0 \Big\}.
  \end{equation*}
 
 \begin{MainThm}\label{TA}\qquad
 
 Let $k\in\re$ and $\La\subset E^{-1}\{k\}$ be a compact $\vr_t$-invariant set. If there is
 $a>0$ such that for all $(\th,T)\in\La\times\re^+$ and
 $\xi\in\Om(\th,T)$:
 $$
 d^2_{\ga_\th} A_L(\xi,\xi)\ge a\;\lV\xi\rV^2_{\cL^2}.
 $$
 then $\La$ is hyperbolic.
 \end{MainThm}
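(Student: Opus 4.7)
The plan is to construct an invariant hyperbolic splitting on the symplectic reduction $\cN := T\Si/\langle X\rangle$, by exhibiting stable and unstable Lagrangian subbundles as limits of ``Green'' subspaces and extracting uniform exponential decay from the coercivity of the index form.

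\emph{Setup and absence of conjugate points.} The linearized flow $d\vr_t$ preserves the symplectic form on $TM$ pulled back from $T^*M$ by the Legendre transform; the Euler--Lagrange vector field $X$ spans its kernel on $T\Si$, so there is an induced symplectic flow $\bar\vr_t$ on $\cN$, of rank $2(n{-}1)$. A Jacobi field $J$ along $\ga_\th$ is determined by $(J(0),\dot J(0))\in T_\th\Si$. Let $V(\th)\subset\cN_\th$ be the Lagrangian subspace of classes with $J(0)=0$, and for $T>0$ set
\[
G^+_T(\th) := d\bar\vr_{-T}\,V(\bar\vr_T\th),
\qquad
G^-_T(\th) := d\bar\vr_{T}\,V(\bar\vr_{-T}\th).
\]
If a nonzero Jacobi field $J$ satisfied $J(0)=J(T)=0$, integration by parts in \eqref{index} would give $I(J,J)=0$; subtracting a scalar multiple of a fixed test variation $Y$ with $Y(0)=Y(T)=0$ and $\langle Y(T/2),X\rangle=1$ produces $\tilde J\in\Om(\th,T)$ with $I(\tilde J,\tilde J)$ controlled by $\langle J(T/2),X\rangle^2$. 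Coercivity together with the Jacobi equation then force $[J]=0$ in $\cN$. Hence $G^\pm_T(\th)\cap V(\th)=\{0\}$ for every $T>0$.

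\emph{Limits and transversality.} Sturm/Maslov-type monotonicity of Green subspaces in the Lagrangian Grassmannian, combined with the transversality to $V$ just established, yields convergence of $G^+_T(\th)$ as $T\to+\infty$ to a Lagrangian, $d\bar\vr_t$-invariant subspace $E^s(\th)$; symmetrically one obtains $E^u(\th)$. A common nonzero vector in $E^s(\th)\cap E^u(\th)$ would produce a Jacobi field bounded on all of $\re$, whose suitable truncations in $\Om(\th,T)$ would have $\cL^2$-norms diverging with $T$ while their $I$-values stay $O(1)$, contradicting the coercivity.

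\emph{Exponential decay and main obstacle.} For $v\in E^s(\th)$ with Jacobi field $J$ on $[0,\infty)$, construct $\xi_T\in\Om(\th,T)$ by cutting off $J$ linearly on $[0,1]$ and $[T{-}1,T]$ and adding a multiple of $X$ to enforce the midpoint orthogonality. Integration by parts reduces $I(\xi_T,\xi_T)$ to boundary contributions of order $|J(0)|^2+|J(T{-}1)|^2$; the coercivity then yields $\int_0^\infty|J(t)|^2\,dt\le C|J(0)|^2$, uniformly in $\th\in\La$. Applying this at each time-shift $\bar\vr_s(\th)$ gives $\int_s^\infty|J|^2\le C|J(s)|^2$, which a Gronwall-type bootstrap along the semigroup upgrades to $|J(t)|\le C'e^{-\la t}|J(0)|$ with $\la>0$ depending only on $a$ and $\La$. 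The unstable estimate is symmetric, so $T\Si|_\La=E^s\oplus\langle X\rangle\oplus E^u$ is hyperbolic. The main obstacle is precisely this last bootstrap: converting the integral coercivity into pointwise exponential decay requires the flow-shift invariance of the hypothesis together with explicit accounting of the $X$-corrections needed to keep the comparison variations in $\Om(\th,T)$, whose cost is controllable because $I$ applied to multiples of $X$ reduces to first-order expressions.
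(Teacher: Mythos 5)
Your overall architecture (reduce to the transversal bundle $\cN$, build Green subspaces as monotone limits, show transversality, upgrade to hyperbolicity) matches the paper's in spirit, but two of your key steps are not actually proved and one is circular.

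\emph{Transversality step.} You claim that a nonzero vector in $E^s(\th)\cap E^u(\th)$ (your Green bundle intersection) ``would produce a Jacobi field bounded on all of $\re$.'' That is the inclusion $\E\cap\F\subset\B$, and it is a nontrivial theorem, not a freebie; the easy inclusion, and the one the paper actually proves (Proposition~\ref{growth}.\eqref{g2}), is the \emph{reverse} one, $\B\subset\E\cap\F$. The paper never needs $\E\cap\F\subset\B$ because it takes a more direct route: Proposition~\ref{Iform} gives the exact identity $I(\xi_T,\xi_T)=h^*(U_T-S_T)\,h$ for the broken Jacobi-field variation $\xi_T$ glued at time $0$ from $d\psi_{-T}(\V(\psi_T\th))$ and $d\psi_T(\V(\psi_{-T}\th))$ (the integral term vanishes and only the boundary Riccati term survives). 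Coercivity plus the lower bound $\lV\xi_T\rV^2_{\cL^2}\ge b|h|^2$ then yields $(U_T-S_T)\succ ab\,I$ on $\cN$ uniformly in $T$ and $\th$, and in the limit $(U-S)|_\cN\succ ab\,I$, hence $\E\cap\F\cap\N=\0$. This is both more direct and gives a quantitative, uniform gap, whereas your argument, even if the missing inclusion were supplied, would only give pointwise transversality.

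\emph{Exponential decay step.} Your cut-off variation $\xi_T$ has a boundary contribution $O(|J(T-1)|^2)$ at the right end, so coercivity only gives
$\int_1^{T-1}|J|^2\lesssim |J(0)|^2+|J(T-1)|^2$. You cannot conclude $\int_0^\infty|J|^2\le C|J(0)|^2$ without already knowing $|J(T-1)|$ is controlled, which is precisely what the decay estimate is supposed to prove. This is the core difficulty, and the paper does not attack it head on: instead it first proves quasi-hyperbolicity of the reduced cocycle $\Psi_t$ (Proposition~\ref{hyp}.\eqref{h1}, using $\B^\top\subset\E^\top\cap\F^\top=\0$), then shows $\E^\top\subset E^s$, $\F^\top\subset E^u$ by a compactness/contradiction argument that leans on the uniform lower growth bound of Proposition~\ref{Ybd} (itself a consequence of the Riccati bound, Proposition~\ref{BdRic}, and the index-form identity). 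Hyperbolicity is then obtained from the abstract Lemma~\ref{qh}/Corollary~\ref{qhcor} (uniform contraction from quasi-hyperbolicity on a compact base plus $\bE=E^s+E^u$), and finally lifted from $\cN$ to $T\Si$ by the graph-transform argument in Proposition~\ref{Xhyp}. Your Gronwall bootstrap replaces all of this by an unproved assertion.

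In short: the route through Green bundles, a transversality criterion and an abstract hyperbolicity lemma is the same as the paper's; but the two analytic inputs you need, namely $\E\cap\F\subset\B$ (or, alternatively, the uniform separation $(U-S)\succ bI$) and the passage from integral coercivity to exponential decay, are exactly what Propositions~\ref{Iform}, \ref{BdRic}, \ref{Ybd} and \ref{hyp} are there for, and your sketch neither supplies them nor points to a genuinely different substitute.
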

 
 Observe that no recurrency assumption is made on the orbits of $\La$.
 Since the lagrangian is $C^2$ and the flow $\vr_t$ is $C^1$, the index form
 $d^2_{\ga_\th} A_L$ is continuous on $(\th,T)$.
 Therefore we get:
 
 \begin{MainCor}\label{CB}\quad
 
 Let $k\in\re$ and $\La\subset E^{-1}\{k\}$ be a compact $\vr_t$-invariant set. If there is a
 sequence of periodic orbits $P_n$ with period $T_n$ such that
 $\lim_n T_n=+\infty$,
 $$
 \La\subset\limsup_n P_n=\{\;\lim_k\th_{n_k}\;\,|\;\,
 \th_n\in P_n,\;\lim_k n_k=\infty,\;\exists\lim_k\th_{n_k}\;\} 
 $$
 and there is $a>0$ such that if $\th_n\in P_n$,
 $\xi\in\Om(\th_n,T_n)$
 $$
 d^2_{\ga_{\th_n}}A_L(\xi,\xi)\ge a\;\lV\xi\rV^2_{\cL^2},
 $$
 then $\La$ is hyperbolic.
 \end{MainCor}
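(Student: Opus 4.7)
The plan is to verify the hypothesis of Theorem~\ref{TA} on $\La$ by an approximation argument, reducing the required index-form bound on $\Om(\th,T)$ to the assumed bound on $\Om(\th_n,T_n)$ via an extension-by-zero trick. The key observation that makes this clean: once $T_n$ is much larger than $T$, the midpoint orthogonality condition defining $\Om(\th_n,T_n)$ is automatically satisfied by variations supported in $[0,T]$, so extension by zero embeds such variations into $\Om(\th_n,T_n)$.

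Fix $\th\in\La$, $T>0$ and $\xi\in\Om(\th,T)$. By the $\limsup$ hypothesis, some subsequence $n_k\to\infty$ and $\th_{n_k}\in P_{n_k}$ satisfy $\th_{n_k}\to\th$; reindexing I write $\th_n\to\th$, and since $T_n\to+\infty$ I may assume $T_n>2T$ throughout. On a tubular neighborhood of the compact arc $\ga_\th([0,T])$ I trivialize $TM$; for $n$ large, the $C^1$ dependence of the flow on initial data guarantees that $\ga_{\th_n}\vert_{[0,T]}$ lies in the same neighborhood and converges to $\ga_\th\vert_{[0,T]}$ in $C^1$. Using this trivialization I view $\xi$ and a transported copy $\xi_n$ along $\ga_{\th_n}\vert_{[0,T]}$ as the same $\re^m$-valued $\cH^1$ function on $[0,T]$, so that $\xi_n(0)=\xi_n(T)=0$ and $\xi_n\to\xi$ in the appropriate sense.

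Next I extend each $\xi_n$ by zero to $[0,T_n]$, obtaining $\tilde\xi_n$. The endpoint conditions $\tilde\xi_n(0)=\tilde\xi_n(T_n)=0$ are immediate, and $T_n/2>T$ forces $\tilde\xi_n(T_n/2)=0$, so $\tilde\xi_n\in\Om(\th_n,T_n)$. The corollary's hypothesis then gives
\[
d^2_{\ga_{\th_n}}A_L(\tilde\xi_n,\tilde\xi_n)\ge a\,\lV\tilde\xi_n\rV^2_{\cL^2},
\]
and since $\tilde\xi_n$ vanishes on $[T,T_n]$, both sides reduce to the corresponding integrals on $[0,T]$ applied to $\xi_n$. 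Invoking the continuity of the index form in $(\th,T)$ stated just before the corollary, together with $\xi_n\to\xi$, I let $n\to\infty$ to obtain $d^2_{\ga_\th}A_L(\xi,\xi)\ge a\,\lV\xi\rV^2_{\cL^2}$. Since $\th\in\La$, $T>0$ and $\xi\in\Om(\th,T)$ were arbitrary, Theorem~\ref{TA} delivers the hyperbolicity of $\La$.

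The main technical obstacle is making the trivialization and convergence step precise: the spaces $T_{\ga_\th}\HT$ depend on $\th$, so some identification is needed both to interpret ``$\xi_n\to\xi$'' and to apply the stated continuity of the index form uniformly along a varying family of curves. Everything else is routine once that identification is fixed; the structural point is that the gap $T_n\gg T$ is exactly what allows the midpoint constraint to evaporate after extension by zero.
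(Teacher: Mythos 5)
Your argument is correct and is precisely the elaboration of the paper's one-line justification (the continuity of $d^2_{\ga_\th}A_L$ in $(\th,T)$ stated immediately before the Corollary): you use the $\limsup$ hypothesis to approximate $\th\in\La$ by $\th_n\in P_n$, and the extension-by-zero device--together with the observation that the midpoint constraint becomes vacuous once $T_n>2T$--to embed $\Om(\th,T)$-type variations into $\Om(\th_n,T_n)$ so that the corollary's uniform bound transfers in the limit to the hypothesis of Theorem~\ref{TA}. This is exactly the intended reduction, not a different route.
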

 
 For example it is known (c.f. Ma\~n\'e~\cite{Ma6}) that for generic
 lagrangians the support of minimizing measure can be approximated by periodic
 orbits which minimize the action in their homotopy classes. But we
 don't know if they satisfy the condition on Corollary~\ref{CB}.
 
 We quote now the main result used in the proof of
 Theorem~\ref{TA}. A {\it Jacobi field} $J(t)$ along a  
 solution $\ga$ of the Euler-Lagrange equation~\eqref{EL}
 is the variational vector field
 $J(t)=\frac{\partial f}{\partial s}(0,t)$
 of a smooth variation $]-\e,\e[\ni s\mapsto f(s,t)$
 of $\ga(t)=f(0,t)$ by solutions of~\eqref{EL}.
  We say that a solution $\ga$ of the Euler-Lagrange
 equation~\eqref{EL}  has a {\it conjugate point} at time $T>0$ 
 if there is a non-zero Jacobi field $J$ along $\ga$ with $J(0)=0$ and
 $J(T)=0$. We say that an orbit segment is {\it disconjugate} if it has no
 conjugate points.
  An orbit segment is disconjugate if
 and only if the index form is positive definite\footnote{See formula \eqref{info0}.} on $\Om(\th,T)$.
 In particular minimizing open orbit segments are
 disconjugate.

 Let $\V$ be the {\it vertical subbundle} of $T(TM)$ defined by
 $\V(\th)=\ker d_\th\pi$, $\th\in TM$,  i.e. the tangent spaces
 of the fibers of $TM\to M$. Since the Jacobi fields are the
 projections by $d\pi$ of the derivative of the lagrangian flow, we
 have the following equivalent definition of conjugate points.
  We say that two points $\th_1$, $\th_2$
 are {\it conjugate} if there is $T\in\re$ such that
 $\vr_T(\th_1)=\th_2$ and
 $d\vr_T(\V(\th_1))\cap\V(\th_2)\ne\0$.
 
 If the whole orbit of $\th\in TM$ is disconjugate then
 (c.f.~\cite[Prop. A]{CI}) the following limits exist:
 \begin{align}
 \E(\th):&=\lim_{T\to+\infty} d\vr_{-T}\big(\V(\vr_T(\th))\big),\notag
 \\
 \F(\th):&=\lim_{T\to+\infty} d\vr_{T}\big(\V(\vr_{-T}(\th))\big).
 \label{Green}
 \end{align}
 The subspaces $\E$, $\F$ are called the {\it Green bundles} along the
 orbit of $\th$. Their dimension is $\dim \E=\dim \F=\dim M$ and 
 they always contain the subspace generated by the
 lagrangian vector field $X$.
 
 \begin{MainThm}\label{TC}\quad
 
 Suppose that $k\in\re$ and $\La\subset E^{-1}\{k\}\subset TM$ 
 is a compact $\vr_t$-invariant subset without
 conjugate points. Then $\La$ is hyperbolic if and only if
 $$
 \E(\th)\cap\F(\th)=\langle X(\th)\rangle \text{ for all }\th\in\La.
 $$
 In this case $\E=E^s\oplus\langle X\rangle$, $\F=E^u\oplus\langle X\rangle$.
 \end{MainThm}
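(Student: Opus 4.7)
The plan is to work in the symplectic reduction of $T\Si|_\La$ modulo the flow direction. Via the Legendre transform, $T\Si$ inherits a symplectic form for which $\langle X\rangle$ is isotropic, and the quotient $T\Si/\langle X\rangle$ is a symplectic vector bundle of rank $2(\dim M-1)$ on which $d\vr_t$ acts as a symplectic cocycle. In this quotient $\V$, $\E$, $\F$, and (when defined) $E^s$, $E^u$ all descend to $d\vr_t$-invariant Lagrangian subbundles of rank $\dim M-1$; denote them with tildes. The theorem is equivalent to the statement that $\widetilde\E\cap\widetilde\F=\0$ iff the reduced cocycle is hyperbolic, in which case $\widetilde\E=\widetilde{E^s}$ and $\widetilde\F=\widetilde{E^u}$; the identifications in the full tangent bundle are recovered via the natural contact structure on $\Si$ coming from strict convexity and superlinearity of $L$.

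For \emph{hyperbolic $\Rightarrow$ transversality}, I would use a graph-transform argument. Since $\widetilde{E^s}$ and $\widetilde{E^u}$ are complementary Lagrangian, any Lagrangian subspace transverse to $\widetilde{E^u}$ is the graph of a symmetric linear map $\widetilde{E^s}\to \widetilde{E^u}$. First I would establish that $\widetilde\V$ is transverse to $\widetilde{E^u}$ on $\La$, which reduces to the classical fact that unstable manifolds of hyperbolic sets for convex Lagrangian flows project injectively to $M$. Representing $\widetilde\V(\vr_T\th)$ as the graph of a symmetric map $A_{\vr_T\th}$, uniformly bounded on the compact $\La$ by continuity, its pullback $d\vr_{-T}\widetilde\V(\vr_T\th)$ is the graph of $B_T=d\vr_{-T}|_{E^u}\circ A_{\vr_T\th}\circ d\vr_T|_{E^s}$ with $\lV B_T\rV\le C^2 e^{-2\la T}\lV A_{\vr_T\th}\rV\to 0$. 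Hence the Green limit $\widetilde\E(\th)$ is the zero graph $\widetilde{E^s}(\th)$; symmetrically $\widetilde\F=\widetilde{E^u}$, so the intersection in the quotient is trivial and the identifications in the theorem follow.

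For the converse \emph{transversality $\Rightarrow$ hyperbolic}, assume $\E\cap\F=\langle X\rangle$ on $\La$. The reduced bundles $\widetilde\E$ and $\widetilde\F$ are transverse and together span $\widetilde{T\Si}|_\La$ by dimension. The core remaining task is exponential contraction of the symplectic cocycle on $\widetilde\E$ and expansion on $\widetilde\F$. Represent $\E,\F$ along each orbit as graphs of symmetric operators $U^\E(\th), U^\F(\th)$ over a fixed horizontal reference, valid by transversality of each with $\widetilde\V$; these satisfy a matrix Ricatti equation from the Jacobi equation. The difference $\Delta(\th):=U^\F(\th)-U^\E(\th)$ is positive semi-definite by no-conjugate-points, with kernel equal to the reduction of $\E\cap\F=\langle X\rangle$, which is $\0$. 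Upgrading upper-semicontinuity of the Green bundles to continuity via the transversality, compactness of $\La$ gives $\Delta(\th)\ge aI$ uniformly for some $a>0$. A monotonicity/Gronwall argument on the Ricatti evolution then yields the contraction $\lV d\vr_t v\rV\le Ce^{-\la t}\lV v\rV$ on the lift $E^s\subset\E$ for $t\ge 0$, with a symmetric expansion on $E^u\subset\F$.

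The main obstacle is this last step of the converse: converting the uniform positivity $\Delta\ge aI$ into an exponential rate for $d\vr_t$. The upgrade from upper-semicontinuity to continuity of the Green bundles (used to extract the uniform bound) relies crucially on the transversality hypothesis, and extracting the rate $\la>0$ from the Ricatti estimate requires a careful matrix inequality on the symmetric cocycle generated by $U^\E$. A secondary technicality is producing the $d\vr_t$-invariant lift $E^s\subset\E$, $E^u\subset\F$ of the reduced splitting using the contact structure on $\Si$.
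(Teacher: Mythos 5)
Your proposal correctly identifies the framework (reduce modulo the flow direction; Green bundles as graphs of symmetric solutions of a Riccati equation; $\E\cap\F=\langle X\rangle$ translates to $U^\F-U^\E$ being positive definite on the transversal). The forward implication via graph transform on the reduced bundle is sound in outline, and it agrees in spirit with how the paper handles the other direction of identification (Proposition~\ref{Xhyp}, where the graph transform is used to lift a hyperbolic splitting from $\N$ to $T\Si$). However, for the central direction (transversality $\Rightarrow$ hyperbolicity) there is a genuine gap, and you have correctly located it yourself: you do not know how to pass from the pointwise positivity $\Delta(\th)=U^\F(\th)-U^\E(\th)\succ 0$ to a uniform exponential rate. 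A ``monotonicity/Gronwall argument on the Riccati evolution'' does not close this; that is essentially the route of~\cite[Prop.~B]{CI}, and it requires a recurrence hypothesis (chain recurrence, or constancy of $\dim E^s$ on minimal sets). The whole point of Theorem~\ref{TC} is to drop recurrence.

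The paper replaces your last step with a different mechanism: \emph{quasi-hyperbolicity}. First, Proposition~\ref{growth}.\eqref{g2} shows that any vector bounded for all time lies in $\E\cap\F$; hence under the hypothesis $\E\cap\F=\langle X\rangle$ the induced action $\Psi_t=\P\circ d\psi_t$ on the transversal bundle $\N$ has no nonzero bounded orbit (Proposition~\ref{hyp}.\eqref{h1}). This is quasi-hyperbolicity, and Lemma~\ref{qh} upgrades it, by a pure compactness argument with no Riccati input, to a uniform exponential rate on the spaces $E^s$, $E^u$ defined by boundedness. What is still missing at that point is that $E^s$ and $E^u$ span; this the paper proves by showing $\E^\top\subset E^s$ and $\F^\top\subset E^u$ (Proposition~\ref{hyp}.\eqref{h2}), whose proof in turn needs a quantitative lower bound on the growth of Jacobi fields, Proposition~\ref{Ybd}, obtained from the index form, the uniform Riccati bound of Proposition~\ref{BdRic}, and a discriminant argument. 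A dimension count then gives $\N=E^s+E^u$, and Corollary~\ref{qhcor} concludes. None of these ingredients appears in your outline, and without them the uniform positivity $\Delta\ge aI$ does not by itself produce exponential decay.

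Two smaller issues. First, the claim that $\widetilde\V\pitchfork\widetilde{E^u}$ is ``classical'' and ``reduces to injectivity of the projection of unstable manifolds'' is not a fact you can invoke; in the paper this kind of transversality comes from the Green-bundle properties $\E\cap\V=\F\cap\V=\0$, not from the hyperbolic splitting. Second, you describe the lift from the reduced cocycle back to $T\Si$ as a ``secondary technicality.'' In the autonomous case there is no canonical flow-transversal subbundle of $T\Si$ (the horocycle flow shows a continuous invariant transversal need not exist), so this lift is a real proposition, not a formality; the paper devotes Section~\ref{Saut} and Proposition~\ref{Xhyp} to it, constructing an explicit time-dependent Hamiltonian $K$ whose flow reparametrizes $\psi_t$ along the chosen transversal $\N$.
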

 
 This theorem is similar to  Proposition~B in \cite{CI}
 but here we have dropped any recurrency condition on $\La$.
 This result also appears in Johnson, Novo, Obaya~\cite{JNO}
 and Arnaud~\cite{Arnaud4}.

 \subsection{The time-dependent case.}\quad
 
   Let $M$ be a closed riemannian manifold. 
  On the space $C^2(TM,\re)$
  we use the $C^2$ topology on compact subsets.
  Let $\cK$ be a topological space and 
  $\phi:\cK\times\re\to \cK$ a continuous flow on $\cK$.
  A time-dependent {\it lagrangian} on $M\times \cK$ is a function
  $L:TM\times \cK\to\re$ satisfying the following conditions:
  \begin{enumerate}
  \item\label{la1} The restrictions $L\vert_{TM\times\{k\}}$ are $C^2$.
  \item\label{la2} The map $\cK\to C^2(TM,\re)$: $k\mapsto L\vert_{TM\times\{k\}}$
   is continuous.
   \item\label{conv} {\it Strict convexity:} 
   There is $b>0$ such that 
   $\tfrac{\partial^2 L}{\partial v^2}(x,v,k)\cdot (h,h)\ge b \,|h|_x^2$.
 \item\label{comp}{\it Completeness:} The maximal solutions 
 of the {\it
 Euler-Lagrange equation}
 \begin{equation}\label{EL2}
 \frac{d}{dt}\,\frac{\partial L}{\partial v}\big(x,\dx,\phi_t(k)\big)=
 \frac{\partial L}{\partial x}\big(x,\dx,\phi_t(k)\big)
 \end{equation}
 are defined in all $t\in\re$.
  \end{enumerate}
  Observe that the strict convexity~\eqref{conv} implies the {\it superlinearity} property:
  \begin{equation}\label{nasuperlin}
\lim_{|v|\to+\infty}\tfrac{L(x,v,k)}{|v|}=+\infty
\qquad \text{ uniformly on $(x,v,k)\in TM\times \cK$.}
\end{equation}

 Special cases are {\it periodic lagrangians}, where 
 $\cK=S^1=\re/\Z$ and $\phi_t(k+\Z)=(t+k)+\Z$, the
 {\it autonomous lagrangian} where $\cK$ is a point and $\cK=\re$
 with $\phi_t(s)=s+t$.

The lagrangian $L$ defines a complete flow $\vr_t$ on $TM\times \cK$,
 called the {\it lagrangian flow} for $L$ by 
 $\vr_t(x,v,k)=\big(\ga(t),\dga(t),\phi_t(k)\big)$, where
 $s\mapsto\ga(s)$ is a solution of the Euler-Lagrange
 equation~\eqref{EL2} with initial conditions $\ga(0)=x$,
 $\dga(0)=v$.

 Given an absolutely continuous curve $\ga:[0,T]\to M$ and $k\in \cK$,
 define the {\it action} of $(\ga,k)$ by
 $$
 A_L(\ga,k):=\int_0^T L\big(\ga(t),\dga(t),\phi_t(k)\big)\;dt.
 $$
 
 Given a smooth path $\Ga:]-\e,\e[\to\HT$ and fixing $k\in\cK$,
 the formulas for the first and second derivative of $s\mapsto
 A(\Ga(s),k)$ are given by \eqref{1var} and \eqref{index}.

   The lagrangian flow induces a tangent map $d\vr_t$ on
  the bundle $\Pi:T(TM)\times \cK \to TM\times \cK$
  given by $d_{(x,v,k)}\vr_t(\zeta,k)=
  \big( \partial_{(x,v)}f_t(\cdot,\cdot,k)(\zeta),\;\phi_t(k)\big)$,
  where $\vr_t(x,v,k)=\big(f_t(x,v,k),\phi_t(k)\big)$.
  We shall ambiguously identify $d_{(x,v,k)}\vr_t(\zeta,k)$ with its
  projection $\partial_{(x,v)}f_t(\cdot,\cdot,k)(\zeta)$.
  Suppose that $\La\subset TM\times \cK$ is a compact $\vr_t$-invariant
  subset. We say that $\La$ is hyperbolic if there is a splitting
  $\Pi^{-1}(\La)=E^s\oplus E^u$ such that conditions~\eqref{Hi} and
  \eqref{Hii} above hold.
  
  Given $\th\in TM\times\cK$ let $\ga_\th:=\Pi\circ\vr_t(\th)$ and
   \begin{equation*}\label{Om1}
  \Om(\th,T):=\big\{\,\xi\in T_{\ga_\th}\HT\;\,\big|\;\,
  \xi(0)=0,\;\xi(T)=0\, \big\}.
  \end{equation*}

 \begin{mainthm}\quad
 
  Let $\La\subset TM\times \cK$ be a compact invariant set for $\vr_t$.
 If  there is $a>0$ such that for all $(\vrt,T)\in\La\times\re^+$ and
 $\xi\in\Om(\vrt,T)$
 $$
 d_{\ga_\vrt}^2 \,A_L(\xi,\xi)\ge a \;\lV\xi\rV_{\cL^2}^2,
 $$
 then $\La$ is hyperbolic.
 \end{mainthm}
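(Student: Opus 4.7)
The plan is to mirror the autonomous argument of Theorem~\ref{TA}, using the $\vr_t$-invariant, $X$-transverse foliation $\{TM\times\{k\}\}_{k\in\cK}$ as the technical substitute for the energy level $\Si=E^{-1}\{k\}$, exactly as announced in the introduction. Since $d\vr_t$ preserves the $TM$-factor up to the translation $k\mapsto\phi_t(k)$ of the $\cK$-coordinate, the vertical subbundle $\V\subset T(TM)$ and the Green-bundle limits \eqref{Green} live fiberwise inside $\Pi^{-1}(\La)$. This is also why the variation class $\Om(\th,T)$ in the time-dependent statement carries no midpoint orthogonality condition: the flow direction has a component outside the bundle $T(TM)$, rather than lying inside an invariant energy hypersurface.

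First I would prove the time-dependent analogue of Theorem~\ref{TC}: if $\La\subset TM\times\cK$ is compact and $\vr_t$-invariant, has no conjugate points, and its Green bundles satisfy $\E(\th)\cap\F(\th)=\0$ for every $\th\in\La$, then $\Pi^{-1}(\La)=\E\oplus\F$ is a hyperbolic splitting with $E^s=\E$, $E^u=\F$. The argument is parallel to Proposition~B of~\cite{CI}: invariance is built into the defining limits, while uniform exponential rates follow from compactness of $\La$ together with the observation that any vector failing to contract in forward time lies in $\F$, and any vector failing to contract in backward time lies in $\E$.

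Next I verify the two hypotheses of this reduction from the index inequality. Disconjugacy is immediate: a conjugate pair along $\ga_\th$ produces a nonzero $J\in\Om(\th,T)$ with $d^2_{\ga_\th}A_L(J,J)=0$, contradicting $d^2_{\ga_\th}A_L(J,J)\ge a\,\lV J\rV^2_{\cL^2}>0$. Disconjugacy together with compactness of $\La$ then yields existence of $\E(\th)$ and $\F(\th)$ along every orbit of $\La$, by the construction recalled as Proposition~A of~\cite{CI}.

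The main obstacle is the transversality $\E(\th)\cap\F(\th)=\0$. Suppose some $\th\in\La$ admits a nonzero $v\in\E(\th)\cap\F(\th)$. Unfolding the limits~\eqref{Green}, $v$ is the $t=0$ value of a nontrivial Jacobi field $J$ along $\ga_\th$ that is uniformly bounded on all of $\re$, since $J$ is simultaneously a limit of Jacobi fields vertical at $+T_n\to+\infty$ and of Jacobi fields vertical at $-T_n\to-\infty$, both families being uniformly bounded on compact intervals. For each large $T$, build $\xi_T\in\Om(\th,T)$ by smoothly truncating $J$ to $0$ on two unit-length intervals near $t=0$ and $t=T$. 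By~\eqref{index} combined with the Jacobi equation, the bulk contribution on $[1,T-1]$ collapses to a boundary term of the form $L_v\cdot J|_1^{T-1}$, so $d^2_{\ga_\th}A_L(\xi_T,\xi_T)$ stays bounded in $T$. Meanwhile $\lV\xi_T\rV^2_{\cL^2}\to+\infty$: a nontrivial Jacobi field cannot vanish on any open subinterval by uniqueness, and strict convexity~\eqref{conv} prevents it from decaying in $L^2$ at both ends, so at least one half of $[1,T-1]$ contributes linearly. For $T$ large this contradicts $d^2 A_L(\xi_T,\xi_T)\ge a\,\lV\xi_T\rV^2_{\cL^2}$, proving $\E\cap\F=\0$; hyperbolicity of $\La$ then follows from the time-dependent version of Theorem~\ref{TC}.
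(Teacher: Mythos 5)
Your overall skeleton is correct and matches the paper's program: get disconjugacy from the index inequality, show the Green bundles meet only in the flow direction, then invoke the time-dependent analogue of Theorem~\ref{TC}. Your remarks about why $\Om(\th,T)$ has no midpoint condition in the time-dependent setting are accurate. But the crux of the proof is the transversality $\E\cap\F=\0$, and there your route diverges from the paper's and leaves two real gaps.

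The paper argues directly, not by contradiction. Fixing $\th$ and a unit vector $h$, it builds a \emph{broken} Jacobi field $\xi_T\in\Om(\psi_{-T}(\th),2T)$: on $[-T,0]$ it is the Jacobi field vertical at $-T$ reaching $h$ at $0$, on $[0,T]$ the Jacobi field from $h$ to vertical at $T$, with a corner at $t=0$. Because each piece is a genuine Jacobi field and vanishes at an endpoint, Proposition~\ref{Iform} collapses the index form \emph{exactly} to $I(\xi_T,\xi_T)=h^*(U_T-S_T)\,h$ (no truncation error terms), while Propositions~\ref{BdRic} and~\ref{Ybd} give a uniform lower bound $\lV\xi_T\rV_{\cL^2}^2\ge b\,|h|^2$. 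The hypothesis then forces $h^*(U_T-S_T)h\ge ab|h|^2$ for all $T$, and letting $T\to\infty$ gives $(U-S)\succ ab\,I$, i.e.\ $\E\cap\F=\0$ outright. Everything is controlled by Riccati estimates on the vertical solutions $S_T,U_T$; no statement about Green-bundle Jacobi fields themselves is ever needed.

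Your contradiction argument needs two facts that you assert but do not prove, and neither is free. First, that a nonzero $v\in\E(\th)\cap\F(\th)$ determines a Jacobi field $J$ bounded on all of $\re$: the paper proves only the inclusion $\B(\th)\subset\E(\th)\cap\F(\th)$ (Proposition~\ref{growth}.\eqref{g2}), not the converse. Your justification --- $J$ is a limit of Jacobi fields vertical at $\pm T_n$ which are ``uniformly bounded on compact intervals'' --- gives local bounds that depend on the interval, which is not a bound on $\re$. The true statement requires the Riccati bounds on the Green solutions plus a growth argument; it is not a formal consequence of the limit definition~\eqref{Green}. Second, and more seriously, that $\lV\xi_T\rV_{\cL^2}^2\to\infty$. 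A bounded nontrivial $J$ need not have $\int_0^T|J|^2\,dt$ divergent: nothing rules out $J\to 0$ (say exponentially) in one or both time directions, in which case the integral converges and you obtain no contradiction. Your appeal to ``uniqueness'' and ``strict convexity'' does not close this; strict convexity is a pointwise property of $L$ and has nothing to do with the $L^2$ asymptotics of $J$. Finally, your cutoff $\xi_T=\chi J$ introduces genuine integral contributions on the transition intervals $[0,1]$ and $[T-1,T]$, since $\chi J$ is not a Jacobi field there; these stay bounded but do not tend to $0$, so even granting bounded $J$ you only get ``$I(\xi_T,\xi_T)$ bounded,'' which is too weak when $\lV\xi_T\rV^2$ might be bounded too. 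The paper's construction avoids all three issues at once: the test field is a broken Jacobi field vertical at both endpoints, so the index form is computed exactly and the estimates never touch the Green-bundle dynamics.
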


 For the periodic case, we have the following
 
  \begin{maincor}\label{hypcor}\quad
 
 Let $\La\subset TM\times S^1$ 
 be a compact invariant set for $\vr_t$.
 If there is a sequence of periodic orbits
 $P_n$ with period $T_n$ such that
 $\lim_n T_n=+\infty$,
 $\La\subset \limsup_n P_n$
 and there is $a>0$ such that if $\vrt_n\in P_n$,
 $\xi\in\Om(\vrt_n,T_n)$
 $$
 d_{\ga_{\vrt_n}}^2 \,A_L(\xi,\xi)\ge a \;\lV\xi\rV_{\cL^2}^2,
 $$
 then $\La$ is hyperbolic.

\end{maincor}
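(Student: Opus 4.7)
The plan is to reduce this Corollary to the preceding time-dependent hyperbolicity theorem by verifying its hypothesis for every $\vrt\in\La$ and every $T>0$. The given assumption only provides the index-form lower bound at periodic orbits $\vrt_n\in P_n$ and for $\xi\in\Om(\vrt_n,T_n)$ at the fixed period $T=T_n$, so two gaps must be bridged: passing from approximating periodic points $\vrt_n$ to the limit $\vrt\in\La$, and passing from the specific period $T_n$ to an arbitrary $T>0$.

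Fix $\vrt\in\La$, $T>0$ and $\xi\in\Om(\vrt,T)$. The $\limsup$ assumption supplies a subsequence $\vrt_{n_k}\in P_{n_k}$ with $\vrt_{n_k}\to\vrt$, and $T_n\to+\infty$ ensures $T_{n_k}>T$ for all $k$ large enough. Continuous dependence of solutions of~\eqref{EL2} on initial conditions then gives $\ga_{\vrt_{n_k}}\to\ga_\vrt$ in the $C^1$ topology uniformly on $[0,T]$, so for $k$ large enough a natural parallel-transport operator $P_k(t):T_{\ga_\vrt(t)}M\to T_{\ga_{\vrt_{n_k}}(t)}M$ is well-defined and tends to the identity in $C^1$. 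I would then set
$$
\tilde\xi_k(t):=\begin{cases}P_k(t)\,\xi(t),&t\in[0,T],\\ 0,&t\in[T,T_{n_k}],\end{cases}
$$
which, thanks to the boundary conditions $\xi(0)=\xi(T)=0$, is continuous, absolutely continuous, and square-integrable together with its covariant derivative, and vanishes at $0$ and at $T_{n_k}$. Hence $\tilde\xi_k\in\Om(\vrt_{n_k},T_{n_k})$.

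Applying the Corollary's hypothesis gives
$$
d^2_{\ga_{\vrt_{n_k}}}A_L(\tilde\xi_k,\tilde\xi_k)\ge a\,\lV\tilde\xi_k\rV_{\cL^2}^2.
$$
Both sides reduce to integrals over $[0,T]$ because $\tilde\xi_k$ vanishes beyond $T$, and parallel transport being a linear isometry yields $\lV\tilde\xi_k\rV_{\cL^2}=\lV\xi\rV_{\cL^2}$. Letting $k\to\infty$ and using the uniform $C^1$ convergence $\ga_{\vrt_{n_k}}\to\ga_\vrt$ together with $P_k\to\mathrm{id}$ in $C^1$, the integrand of~\eqref{index} converges uniformly on $[0,T]$, so $d^2_{\ga_{\vrt_{n_k}}}A_L(\tilde\xi_k,\tilde\xi_k)\to d^2_{\ga_\vrt}A_L(\xi,\xi)$. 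This verifies the hypothesis of the time-dependent theorem at the arbitrary pair $(\vrt,T)$, and that theorem then gives hyperbolicity of $\La$.

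The delicate point is the limit in the index form: the covariant derivative $\dot{\tilde\xi}_k=\dot P_k\,\xi+P_k\,\dxi$ picks up a zeroth-order error from $\dot P_k$, but this term vanishes in $\cL^2$ because $\ga_{\vrt_{n_k}}\to\ga_\vrt$ in $C^1$ forces $\dot P_k\to 0$ uniformly. Once this is observed, each of the four bilinear terms of~\eqref{index} converges to its intended value by dominated convergence, and the rest of the argument is a routine verification.
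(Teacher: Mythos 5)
Your proposal is correct and is essentially the paper's own argument: the paper merely notes that "since the lagrangian is $C^2$ and the flow $\vr_t$ is $C^1$, the index form $d^2_{\ga_\th}A_L$ is continuous on $(\th,T)$" and leaves the reduction to Theorem A1 implicit, whereas you carry out that continuity argument explicitly — transporting $\xi$ to the nearby periodic orbit, extending by zero to $[0,T_{n_k}]$ so it lands in $\Om(\vrt_{n_k},T_{n_k})$, and passing to the limit. The details (parallel transport an isometry, $\dot P_k\to 0$ from $C^1$-convergence, dominated convergence for the four terms of \eqref{index}) are exactly what makes the paper's one-line continuity claim precise.
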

 
 The {\it vertical bundle} is defined  as $\V=\ker
 d\pi\times\cK\subset T(TM)\times\cK$. Two points $\th_1,\th_2\in
 TM\times\cK$ are {\it conjugate} if there is $T\in\re$ such that
 $\vr_T(\th_1)=\th_2$ and $d\vr_T(\V(\th_1))\cap\V(\th_2)\ne\0$.
 On a disconjugate orbit the Green bundles are defined
 by~\eqref{Green}.
 
 \begin{mainthm}\label{TC1}\quad
 
 If $\La\subset TM\times\cK$ is a compact $\vr_t$-invariant subset without
 conjugate points. Then $\La$ is hyperbolic if and only if
 $\E(\th)\cap\F(\th)=\0 \text{ for all }\th\in\La$.
  In this case $\E=E^s$ and  $\F=E^u$.
 \end{mainthm}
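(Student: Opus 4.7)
The plan is to follow the proof of the autonomous Theorem~\ref{TC}, adapting it to the parameterized setting on $TM\times\cK$. Two structural simplifications appear: there is no conserved energy, so the ambient tangent space at each point of $\La$ is the full $2\dim M$-dimensional $T_{(x,v)}(TM)$; and there is no distinguished direction $\langle X\rangle$ to quotient out, so the Green bundles—each of dimension $\dim M$ and Lagrangian for the fiberwise symplectic form pulled back from $T^*M$ by the Legendre map—are transverse exactly when their intersection is $\0$. All the relevant objects (the index form~\eqref{index}, the vertical $\V$, the limits~\eqref{Green}) make sense fiberwise over $\cK$ and depend continuously on $k$ by~\eqref{la2}.

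For ($\Leftarrow$), first I would verify that disconjugacy makes $\E$ and $\F$ continuous, $d\vr_t$-invariant Lagrangian subbundles of dimension $\dim M$; this is \cite[Prop.~A]{CI} applied fiberwise, its proof using only convexity along each orbit. Setting $E^s:=\E$ and $E^u:=\F$, the transversality hypothesis gives a $d\vr_t$-invariant splitting. The remaining exponential estimate is obtained by a Riccati argument: since $E^s$ is Lagrangian and transverse to $\V$ (proved below), it can be written as the graph of a symmetric operator $U(t)$ satisfying a matrix Riccati equation whose leading coefficient $L_{vv}^{-1}$ is uniformly positive by~\eqref{conv}. The transversality $E^s\cap E^u=\0$, the compactness of $\La$ and the continuity in $k$ yield uniform bounds on $U$ and on its counterpart for $E^u$; standard Riccati comparison then delivers the rates \eqref{Hii}.

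For ($\Rightarrow$), first show that disconjugacy makes $\V$ transverse to both $E^s$ and $E^u$: if $0\ne v\in\V(\th)\cap E^s(\th)$, the associated Jacobi field $J(t):=d\pi\circ d\vr_t(v)$ has $J(0)=0$ and $|J(t)|\to 0$, and the standard index-form/second-variation argument applied to a cutoff of $J$ on a long interval produces a variation with non-positive index, forcing a conjugate point, a contradiction; the $E^u$ case is symmetric. Now fix $\vrt\in\V(\vr_T(\th))$ and decompose $\vrt=\vrt_s+\vrt_u\in E^s(\vr_T\th)\oplus E^u(\vr_T\th)$; transversality gives $\max(\lV\vrt_s\rV,\lV\vrt_u\rV)\le C\lV\vrt\rV$ uniformly on $\La$, and therefore
\[
 d\vr_{-T}(\vrt)=d\vr_{-T}(\vrt_s)+d\vr_{-T}(\vrt_u),
 \qquad \lV d\vr_{-T}(\vrt_u)\rV\le Ce^{-\la T}\lV\vrt\rV.
\]
Every cluster point of $d\vr_{-T}(\V(\vr_T(\th)))$ therefore lies in $E^s(\th)$, so $\E(\th)\subset E^s(\th)$, and $\dim\E=\dim E^s=\dim M$ forces equality; symmetrically $\F=E^u$, and $\E\cap\F=\0$ follows.

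The main obstacle is the uniform Riccati estimate in ($\Leftarrow$). Without any recurrency or ergodic theorem available, the pointwise transversality $\E(\th)\cap\F(\th)=\0$ must be upgraded to a uniform (in $\th\in\La$) lower bound on the angle between the two bundles and on the non-degeneracy of the Riccati coefficients; verifying that all constants in the resulting differential inequality depend only on the convexity bound $b$ of \eqref{conv} and on continuous data over the compact set $\La$ is the delicate bookkeeping.
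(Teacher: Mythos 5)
Your ($\Rightarrow$) direction is in line with the standard argument (and with what the paper would say in the autonomous case), though the step ``$\V\cap E^s=\0$'' is established more directly from Proposition~\ref{growth}.\eqref{g1}: a nonzero $\xi\in\V(\th)\cap E^s(\th)$ would give a Jacobi field $J(t)=Y_\th(t)v$ with $|J(t)|\to\infty$ by \eqref{g1}, while $\xi\in E^s$ forces $|J(t)|\to 0$. The cutoff/index-form route you sketch can be made to work, but the conclusion is not a conjugate point on a finite interval (the field never vanishes), so it needs additional care.

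The ($\Leftarrow$) direction is where your proposal genuinely diverges from the paper, and where there is a real gap. You propose to write $\E=\graph(S)$, $\F=\graph(U)$ and run a Riccati comparison, obtaining the rates from a uniform positive-definite lower bound $U-S\succ\e I$ on $\La$. You correctly identify that upgrading the pointwise transversality $\E(\th)\cap\F(\th)=\0$ to such a uniform bound is the crux, but ``compactness of $\La$ and continuity in $k$'' is not enough: the Green bundles are not known to be continuous on $\La$ a priori. Since $S=\lim_{T\to\infty}S_T$ with $S_T$ increasing and $U=\lim_{T\to\infty}U_T$ with $U_T$ decreasing, $S$ is only lower semicontinuous and $U$ only upper semicontinuous, so $U-S$ is upper semicontinuous --- which gives a uniform \emph{upper} bound (already supplied by Proposition~\ref{BdRic}) but not the lower bound you need. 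Pointwise positivity of an u.s.c.\ function on a compact set does not imply a uniform positive lower bound. (Continuity of $\E,\F$ is in fact a \emph{consequence} of the theorem, since they are then the stable/unstable bundles; assuming it up front is circular.) Without recurrence or ergodicity one cannot patch this by an averaging argument either.

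The paper's route is designed precisely to avoid this. It does not attempt Riccati comparison for the rates. Instead it works with the action $\Psi_t$ on the transversal bundle and establishes \emph{quasi-hyperbolicity}: no nonzero vector has a two-sided bounded orbit. This is a pointwise statement that follows from Proposition~\ref{growth}.\eqref{g2} ($\B\subset\E\cap\F$) together with the hypothesis $\E\cap\F=\0$; no continuity of the Green bundles is needed. The uniform exponential rate is then produced by Lemma~\ref{qh}, a soft compactness/subsequence contradiction that requires no Riccati bookkeeping and no continuity of the splitting: if a uniform contraction rate fails, one extracts a nontrivial bounded orbit, contradicting quasi-hyperbolicity. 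Finally Corollary~\ref{qhcor} reduces hyperbolicity to $\bE=E^s+E^u$, and Proposition~\ref{hyp} supplies this by showing $\E\subset E^s$, $\F\subset E^u$ --- a nontrivial step whose proof relies on the growth estimate of Proposition~\ref{Ybd}, not on continuity. So the paper trades your Riccati estimate for the quasi-hyperbolicity machinery, which is exactly what lets the authors drop all recurrence hypotheses; your outline cannot close without re-deriving something equivalent.
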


\section{The hamiltonian Jacobi and Riccati equations.}

The {\it Liouville 1-form} $\la=p\,dx$ on the cotangent bundle $T^*M$
is defined by $\la_\th(\xi):=\th(d\pi(\xi))$, where $\xi\in T_\th
T^*M$ and $\pi:T^*M\to M$ is the projection. The {\it symplectic form }
on $T^*M$ is $\om:=d\la=dp\wedge dx$.

The {\it Hamiltonian} $H:T^*M\to\re$ corresponding to the autonomous lagrangian 
\linebreak  
$L:TM\to\re$ is defined by
\begin{equation}\label{hami}
H(x,p):=\max_{v\in T_xM}\big[\, p(v)-L(x,v)\,\big].
\end{equation}
The {\it hamiltonian vector field} $X$ of $H$ is defined by the
equation $\om(X,\,\cdot\,)=-dH$. Its flow $\psi_t$ is called the 
{\it hamiltonian flow} for $H$. The {\it Legendre
transform} $\cF:TM\to T^*M$, defined by $\cF(x,v)=(x,L_v(x,v))$,
maps the  lagrangian vector field of $L$ to the hamiltonian vector field
and hence it conjugates the flows $\vr_t$ and $\psi_t$.

The {\it connection map} $K:T_{(x,v)}TM\to T_xM$ on $TM$ is given by
the covariant derivative $K(\xi)=\frac{D}{ds}\Ga\big|_{s=0}$, where
$\Ga:]-\e,\e[\to TM$ is a smooth curve such that $\Ga'(0)=\xi$. The
map $F:TM\to T^*M$, $F(x,v)=\langle v,\cdot\rangle_x$ induces a
connection map $K^*:= K\circ DF^{-1}:T_\th T^*M\to T_{\pi(\th)}M$ on $T^*M$.
The {\it Sasaki metric} on $T^*M$ is defined by
$$
\langle\xi,\zeta\rangle_\th:=
\langle d\pi(\xi),d\pi(\zeta)\rangle_{\pi(\th)}
+\langle K^*(\xi),K^*(\zeta)\rangle_{\pi(\th)}.
$$

There is a canonical splitting $T_\th T^*M=\H(\th)\oplus \V(\th)$
into two lagrangian subspaces under $\om$, where the {\it vertical
subspace} $\V(\th):=\ker(d_\th \pi)$ is the tangent space to the fiber
$T_x^*M$ and the {\it horizontal subspace} $\H(\th):=\ker K^*$ consist
of tangents to parallel paths $\Ga^*$.
In this splitting the symplectic form $\om=dp\wedge dx$ is written as
\begin{equation}\label{Sform}
\om_\th[(h_1,v_1),(h_2,v_2)]
=\langle v_1,h_2\rangle_{\pi(\th)}
-\langle v_2,h_1\rangle_{\pi(\th)},
\qquad (h_i,v_i)\in \H(\th)\oplus\V(\th).
\end{equation}

In the splitting $T (T^*M)=\H\oplus\V$ the hamiltonian vector field is
given by the {\it hamiltonian equations}
\begin{align}\label{Hamilton}
  \tfrac{dx}{dt}=H_p(x,p),\qquad
  \tfrac{D}{dt}p=-H_x(x,p).
\end{align}
The coordinates $(h(t),v(t))=d\psi_t(h_0,v_0)\in\H\oplus\V$ of the
derivative of the hamiltonian flow satisfy the {\it Jacobi equations:}
\begin{align}\label{Jacobi}
\dot h=H_{px}\,h+H_{pp}\, v\;,
\qquad
\dot v=-H_{xx}\,h-H_{xp}\, v\,,
\end{align}
where the dot means covariant derivative and $H_{xx}$, $H_{xp}$,
$H_{px}$, $H_{pp}$ are linear operators.
The convexity of $L$ implies that $H_{pp}$ is positive definite.
If we change the metric so that the projected orbit $\ga(t)=\pi\circ\psi_{t}(\th)$
becomes a geodesic\footnote{By proposition~\ref{growth}.\eqref{g4} below, 
if the orbit of $\th$ has no conjugate points and the energy level of $\th$ is regular, then $\dga(t)\ne 0$.}
 and we use Fermi coordinates then the linear operators
in \eqref{Jacobi} coincide with the partial derivatives and $\dot h$, $\dot v$
coincide with time derivatives in $\re^n$. Also in these coordinates
$\H=\ker K^*=[\delta p=0]$.

Alternatively we shall use local coordinates $(x,p,t)$, $p=\sum_i p_i dx_i$, along a neighborhood
of an orbit $\{(\psi_t(\th),t):t\in\re\}\subset T^*M\times \re$
with bounded $C^3$ norm.
We use coordinates in $T^*M\times\re$ instead of $T^*M$ in order to split possible self intersections
of the domain of the chart.
In these coordinates the Hamilton and Jacobi equations take the same forms
\eqref{Hamilton} and \eqref{Jacobi}, where $H_x$, $H_p$, $H_{xx}$, $H_{xp}$, $H_{px}$, $H_{pp}$
are partial derivatives and the time derivatives in \eqref{Hamilton} and \eqref{Jacobi}
are usual derivatives in euclidean space. The norm of vectors in these coordinates 
are equivalent to the riemannian norm on the manifold. In these coordinates the symplectic form is 
$\om=dp\wedge dx$,
$$
\om_\th[(h_1,v_1),(h_2,v_2)]= v_1\cdot h_2-v_2\cdot h_1,
$$ and 
$\V(\th) = d\pi(\th)^{-1}\{0\} = \{(0,v): v\in T_{\pi(\th)} M\}$.
In these coordinates we  use
$$
\H(\th):=\{(h,0):h\in T_{\pi(\th)}M\},
$$
which may not coincide with $\ker K^*$ because 
the projected hamiltonian vector field $d\pi X(\th)$ may not be parallel and 
the  coordinate vectors ${\partial}/{\partial x_i}$ may not be parallel
translations along $\pi\circ\psi_t(\th)$.
Nevertheless the subspaces $\H(\th)$, $\V(\th)$
are  lagrangian.

Given $\th\in T^*M$ let  $E\subset T_\th T^*M$ be an $m$-dimensional
subspace, where $m=\dim M$. Suppose that 
\begin{equation}\label{nv}
d_\th\psi_t(E)\cap\V(\psi_t(\th))\ne\0\qquad
\text{ for all } t\in[0,T].
\end{equation}
Then we can write $d\psi_t(E)=\graph(S(t))$, where
$S(t):\H(\psi_t(\th))\to \V(\psi_t(\th))$ is a linear map. This is, if
$\xi\in E$ then
$$
d\psi_t(\xi)=\big( h(t), \,S(t)\, h(t) \big),
$$
where the pair $h(t)$, $v(t)=S(t)\, h(t)$ is a solution of the 
Jacobi equation~\eqref{Jacobi}.
From~\eqref{Sform}, the subspace $E$ is lagrangian iff $S(t)$ is a
symmetric linear map, identifying 
$\H(\vrt)\approx T_{\pi(\vrt)}M\approx \V(\vrt)$ for $\vrt\in T^*M$. 
Replacing this solution in \eqref{Jacobi} we get
$$
\dot S\,h+S\,(H_{px}+H_{pp}\,S)\,h=-H_{xx}\,h-H_{xp}\,S\,h.
$$
Since this holds for all 
$h\in d\pi(d\psi_t(E))= \H(\psi_t(\th))$
we obtain the {\it Riccati equation:}
\begin{equation}\label{riccati}
\dot S+S\,H_{pp}\,S+S\, H_{px}+H_{xp}\,S+H_{xx}=0.
\end{equation}

Our hypothesis~\eqref{nv} implies that there is a matrix solution
$H(t)$, $V(t)$ of the Jacobi equations~\eqref{Jacobi} such that
$d\psi_t(E)=\image\big(H(t),V(t)\big)\subset T_{\psi_t(\th)}(T^*M)$
with $\det H(t)\ne 0$ and $V(t)=S(t)\,H(t)$ for all $t\in[0,T]$.

\bigskip
\bigskip

\section{Conjugate points and  Green bundles.}

Two points $\th_1,\,\th_2\in T^*M$ are said {\it conjugate} if there
is $T\in\re$ such that $\psi_T(\th_1)=\th_2$ and
$d\psi_T(\V(\th_1))\cap \V(\th_2)\ne\0$, i.e. if there is a non-zero
solution $(h(t),v(t))$ of the Jacobi equations~\eqref{Jacobi} along 
the orbit of $\th_1$ with $h(0)=0$ and $h(T)=0$. Since the Legendre
transform satisfies $\pi_{T^*M}\circ\cF=\pi_{TM}$, this definition of
conjugate point corresponds to the one with Jacobi fields given in the introduction.

Let $\th\in T^*M$ and suppose that the orbit segment
$\psi_{[-T,T]}(\th)$ is disconjugate. Define a partial
order on symmetric isomorphisms $A,B:\H(\th)\to\V(\th)$ by $A\prec B$
iff $B-A$ is positive definite. For $t\in]0,T[$ let
$S_t,\,U_t:\H(\th)\to\V(\th)$ be symmetric isomorphisms such that
$E_t:=d\psi_{-t}(\V(\psi_t(\th)))=\graph(S_t)$ and
$F_t:=d\psi_t(\V(\psi_{-t}(\th)))=\graph(U_t)$. By proposition~1.4
in~\cite{CI}, if $0<s<t\le T$ then $S_s\prec S_t\prec U_t\prec U_s$.
When the whole orbit of $\th$ is disconjugate, the
limits $\lim_{t\to+\infty}S_t$ and $\lim_{t\to+\infty}U_t$ exist and
are called the {\it Green subspaces}
\begin{align*}
\E(\th):=\lim_{T\to+\infty}d\psi_{-T}(\V(\psi_T(\th))),
\qquad
\F(\th):=\lim_{T\to+\infty}d\psi_{T}(\V(\psi_{-T}(\th))).
\end{align*}
These subspaces are lagrangian and invariant along the orbit of $\th$.
Moreover, $\E\cap\V=\0$ and $\F\cap\V=\0$.

 \begin{Definition}\label{defbh}\quad
 
  Given a set $\La\subset T^*M\times\re$ and constants $b_1,\,b_2>0$,
 define the set of {\it bounded hamiltonians}
 $\cH(T^*M\times\re,\La,b_1,b_2)$ as the set of functions
 $H:T^*M\times\re\to\re$ such that
 \begin{enumerate}
 \item\label{bh1}
 $H\in C^3(T^*M\times\re,\re)$.
 \item\label{bh3}
 $\lV H\rV_{C^3(\La,\re)}\le b_1$.
 \item\label{bh4}
  $y^* H_{pp}(\th,t)\, y\ge b_2\, |y|_\th^2$ for all
 $(\th,t)\in\La$.              
 \end{enumerate}
 \end{Definition}
  We denote by $\psi_t$ the hamiltonian flow of the bounded
  hamiltonian $H$.

 \begin{Proposition}\label{BdRic}
 Given $b_1,\,b_2>0$ there is $A>0$ such that for any $\La\subset
 T^*M\times\re$ and any bounded hamiltonian 
 $H\in\cH(T^*M\times\re,\La,b_1,b_2)$ if $T>2$,
 $\psi_{[0,T]}(\th)\subset\La$ and
 $S(t):\H(\psi_t(\th))\to\V(\psi_t(\th))$, $t\in]0,T[$ is a symmetric solution of
 the Riccati equation~\eqref{riccati}, then
 $$
 \lV S(t)\rV<A \qquad \text{ for } \quad 1<t<T-1.
 $$
 \end{Proposition}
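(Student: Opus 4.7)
The plan is to bound $\lV S(t)\rV$ by controlling the largest and smallest eigenvalues of $S(t)$ separately, via scalar differential inequalities obtained directly from the Riccati equation~\eqref{riccati}, using the uniform positive definiteness $H_{pp}\ge b_2 I$ as a stabilizing mechanism. The key idea is that the quadratic term $-S\,H_{pp}\,S$ forces large positive eigenvalues to decay in forward time and large negative ones to decay in backward time.

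First I would derive an inequality for $\lambda(t):=\lambda_{\max}(S(t))$. Since $S$ depends smoothly on $t$ and is symmetric, $\lambda$ is Lipschitz; at a point of differentiability, choosing a unit eigenvector $v$ for $\lambda$ and using~\eqref{riccati} together with $H_{xp}=H_{px}^{*}$ yields
$$\dot\lambda=\langle\dot S\,v,v\rangle=-\lambda^{2}\langle H_{pp}v,v\rangle-2\lambda\langle H_{xp}v,v\rangle-\langle H_{xx}v,v\rangle.$$
The bounds in items~\eqref{bh3} and~\eqref{bh4} of Definition~\ref{defbh} then give $\dot\lambda\le -b_{2}\lambda^{2}+2b_{1}|\lambda|+b_{1}$ a.e.\ on $(0,T)$. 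The scalar ODE $\dot y=-b_{2}y^{2}+2b_{1}|y|+b_{1}$ has a stable equilibrium at the positive root $y_{\ast}$ of $b_{2}y^{2}=2b_{1}y+b_{1}$, and the maximal solution $\bar y$ starting from $y(0^{+})=+\infty$ satisfies $\bar y(t)\le y_{\ast}+1/(b_{2}t)$. Scalar comparison gives $\lambda_{\max}(S(t))\le C_{1}:=y_{\ast}+1/b_{2}$ for all $t\ge 1$.

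Second, the same argument run backward controls $\lambda_{\min}(S(t))$ from below. Setting $Q(\tau):=-S(T-\tau)$, a direct computation shows $Q$ satisfies
$$\tfrac{dQ}{d\tau}=-Q\,H_{pp}\,Q+Q\,H_{px}+H_{xp}\,Q-H_{xx},$$
with coefficients evaluated along the same orbit (still in $\Lambda$) and still satisfying the bounded hamiltonian estimates. Crucially the stabilizing quadratic term $-Q H_{pp}Q$ is preserved, so applying the previous eigenvalue argument to $\lambda_{\max}(Q(\tau))$ gives $\lambda_{\max}(Q(\tau))\le C_{1}$ for $\tau\ge 1$, i.e., $\lambda_{\min}(S(t))\ge -C_{1}$ for $t\le T-1$. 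Setting $A:=C_{1}$ yields $\lV S(t)\rV\le A$ for $t\in(1,T-1)$.

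The main technical nuisance is the Lipschitz nonsmoothness of $\lambda_{\max}$ at eigenvalue crossings, which is handled by the standard Kato-type inequality for eigenvalues of symmetric matrices; the comparison principle for the scalar ODE itself is independent of the actual value of $S$ at the endpoints of $(0,T)$. The chief conceptual point — and the reason I prefer this direct ODE-based approach to one using reference Riccati solutions coming from the forward/backward vertical images $d\psi_{t}(\V(\th))$ and $d\psi_{t-T}(\V(\psi_{T}(\th)))$ — is that no disconjugacy assumption on the orbit is needed beyond what is already implicit in the hypothesis that $S(t)$ exists on all of $(0,T)$.
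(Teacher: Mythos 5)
Your proposal is correct and proves the proposition, but it follows a genuinely different route from the paper's. The paper first completes the square, setting $C := H_{pp}^{-1}H_{px}$, $D := H_{xx}-C^{*}H_{pp}C-\dot C$, and $V := S+C$, which eliminates the linear term and reduces the Riccati equation to $\dot V + V^{*}H_{pp}V + D = 0$; it then fixes a unit test vector $x$, studies the scalar function $t\mapsto x^{*}V(t)x$, and compares it against $M\,w(t)$ with $w(t)=R\coth(Rt-d)$, getting the two-sided bound in one stroke (blow-up of $\coth$ both as $t\to(d_0/R)^{+}$ and as $t\to(c_0/R)^{-}$). You instead keep the cross term, pass to the extremal eigenvalue $\lambda_{\max}(S(t))$ via the Kato/Dini derivative formula $D^{+}\lambda=\max_{v}\langle\dot S\,v,v\rangle$ over unit eigenvectors, exploit that at an eigenvector the term $\langle H_{xp}Sv,v\rangle=\lambda\langle H_{xp}v,v\rangle$ is controlled by $b_1|\lambda|$, and compare with the explicit supersolution $y_{*}+1/(b_2 t)$; the lower bound comes from the same argument applied to $Q(\tau)=-S(T-\tau)$, whose Riccati equation retains the stabilizing quadratic $-QH_{pp}Q$. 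Both are scalar comparison arguments against an explicit blowing-up supersolution. What the paper's version buys is that working with $x^{*}V(t)x$ for a \emph{fixed} $x$ sidesteps the Lipschitz nonsmoothness of $\lambda_{\max}$, at the price of needing $\dot C$ (hence a $C^{3}$ bound on $H$, which is why Definition~\ref{defbh} is stated with $C^{3}$); your version only needs the $C^{2}$ bounds on $H_{xx},H_{xp}$ and the ellipticity $H_{pp}\ge b_2 I$, at the price of invoking the eigenvalue-derivative lemma. Your remark contrasting this with "reference Riccati solutions from forward/backward vertical images" is a bit of a red herring: the paper's own proof is also a direct ODE-comparison argument and likewise requires no disconjugacy beyond the existence of $S$ on $(0,T)$.
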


 Observe that in this proposition $S(t)$ may not be defined at
 $t=0,T$.
 We shall need the following lemma, whose proof is elementary.
 
 \begin{Lemma}
 The function $w(t):=R\, \coth(Rt-d)$, $R>0$, satisfies
 \renewcommand{\theenumi}{\roman{enumi}}
 \begin{enumerate}
  \renewcommand{\theenumi}{\roman{enumi}}
 \item $\dw+w^2-R^2=0$;
 \item $\dw<0$, \; $w(-t+d/R)=-w(t+d/R)$;
 \item $\lim_{t\to+\infty}w(t)=R$,\; $\lim_{t\to-\infty}w(t)=-R$;
 \item $\lim_{t\to(d/R)^+}w(t)=+\infty$,\; 
         $\lim_{t\to(d/R)^-}w(t)=-\infty$.    
 \end{enumerate}
 
 \end{Lemma}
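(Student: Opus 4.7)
The plan is to verify each of the four assertions by direct calculation, using only the derivative formula $\tfrac{d}{du}\coth u=-\mathrm{csch}^2 u$, the hyperbolic identity $\coth^2 u-\mathrm{csch}^2 u = 1$, and the elementary asymptotic behavior of $\coth$ at $\pm\infty$ and at $0$.

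For (i) I would differentiate $w(t)=R\coth(Rt-d)$ via the chain rule to obtain $\dot w=-R^2\mathrm{csch}^2(Rt-d)$, then apply the identity $\coth^2-\mathrm{csch}^2=1$ to rewrite the right-hand side as $R^2-R^2\coth^2(Rt-d)=R^2-w^2$. This yields the Riccati-type relation $\dot w+w^2-R^2=0$, and at the same time supplies the inequality $\dot w<0$ in (ii), since $\mathrm{csch}^2>0$ throughout its domain of definition.

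For the symmetry in (ii), substituting $t\mapsto \pm t + d/R$ makes the argument of $\coth$ become $\pm Rt$, so the two sides of the claimed equality reduce to $\pm R\coth(\pm Rt)$ and coincide by the oddness of $\coth$. Assertions (iii) and (iv) then follow at once from the standard limits $\lim_{u\to\pm\infty}\coth u=\pm 1$ and $\lim_{u\to 0^\pm}\coth u=\pm\infty$ applied to $u=Rt-d$, observing that the singularity of $w$ sits precisely at $t=d/R$.

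There is no genuine obstacle; this is why the paper flags the lemma as elementary. The only conceptual content is the Riccati identity (i), which singles out $w=R\coth(Rt-d)$ as the natural scalar supersolution of $\dot w + w^2 = R^2$ with prescribed blow-up time $t=d/R$ and prescribed limits $\pm R$ at $\pm\infty$ — exactly the features that one expects to be used as a one-dimensional comparison function against the matrix Riccati equation~\eqref{riccati} in the proof of Proposition~\ref{BdRic}.
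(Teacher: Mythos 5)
Your proof is correct and is precisely the elementary verification the paper has in mind when it states the lemma ``whose proof is elementary'' without writing one out. All four items are checked by the chain rule, the identity $\coth^2 u - \mathrm{csch}^2 u = 1$, the oddness of $\coth$, and its standard limits at $0^\pm$ and $\pm\infty$; there is nothing further to compare.
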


 \noindent{\bf Proof of Proposition~\ref{BdRic}:}
 
  By the spectral theorem, it is enough to prove that $x^* \,S(t)\, x$
 is uniformly bounded for $\lV x\rV=1$ and $1<t<T-1$. 
 Since $H_{px}^* = H_{xp}$ and $S^*=S$, from~\eqref{riccati} we have that
 $$
 \dot{S} + S^*\, H_{pp}\, S + H_{px}^* \, S + S^*\, H_{px} + H_{xx} = 0.
 $$
 Let $C:= (H_{pp})^{-1}\, H_{px}$,\; $D:= H_{xx} - C^*  H_{pp}  C - \dot{C}$
 and $V(t):= S(t) + \, C(\psi_t(\theta))$. Then
 $$
 \dot{V} + V^*\, H_{pp}\, V + D = 0 \, .
 $$
 
 Since by \ref{defbh}.\eqref{bh1} $C$ and $D$ are continuous, they are 
 bounded on $\La$. Hence it is enough to prove that
 $x^* V(t) \, x$ is uniformly bounded on $t\in{]1,T-1[}$.
 Observe that $V(t)$ is not necessarily symmetric. Let
 $M>0$ be such that
 \begin{equation}\label{E:G.2.1}
 y^* H_{pp}(\vrt)\, y \ge \frac 1M\, \lV y \rV^2
 \qquad \text{for all}\quad \vrt\in \La.
 \end{equation}
 Let $R>0$ be such that
 \begin{equation}\label{E:G.2.2}
 \lv \, y^* D(\vrt)\, y \,\rv < M\, R^2
 \qquad \text{for all}\quad \vrt\in \La,
 \quad \lV y\rV=1.
 \end{equation}
 We claim that
 $$
 \lv\,y^* V(t)\, y \,\rv \le M\, R\, \coth(R)
 \qquad
 \text{for all}\quad \lV y \rV = 1,
 \quad
 1<t<T-1.
 $$
 
  For, let  $\lV x \rV=1$ and write 
 $H_{pp}(t):= H_{pp}(\psi_t\,\theta)$.
 Suppose that there exists $1<t_0<T$ such that
 $$
 x^* V(t_0)\, x =: M\,\a > M\, R\, \coth(R).
 $$
 There exists $d_0\in\re$ such that $R\, \coth\, (R\, t_0 - d_0) =\a$.
 Observe that $d_0>0$ and $t_0> \frac {d_0}R>0$. Write
 $w(t):= R\, \coth\, (R\, t - d_0)$. Then $w(t)$ is a
 solution of $\dot{w}+w^2-R^2=0$ for $t>\frac{d_0}R$.
 In particular,
 $$
 M\, \dot{w} + M\, w^2 - M\, R^2 = 0.
 $$
 
 Let $f(t):= x^*\, V(t)\, x - M\, w(t)$. Then $f(t_0)=0$ and
 \begin{equation}\label{E:G.2.3}
 f'(t) + \left(x^*\, V(t)^* H_{pp}(t)\, V(t)\, x - M\, w(t)^2\right)
 + \left( x^* D(t)\, x + M\, R^2\right) = 0\, .
 \end{equation}
 Using Schwartz inequality and~\eqref{E:G.2.1}, we have that
 \begin{align*}
 M\, w(t_0)^2 &= M\, \a^2 = \frac 1M \, (M\, \a)^2
                 = \frac 1M\, ( x^*\, V(t_0)\, x)^2
 \\
              &\le \frac 1M\, \lv V(t_0)\, x\rv^2
                 \le \bigl(V(t_0)\,x\bigr)^* 
                 H_{pp}(t_0)\, \bigl( V(t_0)\,x\bigr),
 \end{align*}
 \begin{equation}\label{E:G.2.4}
 M\, w(t_0)^2 \le x^*\, V(t_0)^* H_{pp}(t_0)\, V(t_0)\, x.
 \end{equation}
 Then~\eqref{E:G.2.4}, \eqref{E:G.2.2} and~\eqref{E:G.2.3} imply that
 $f'(t_0)<0$. The same argument can be applied each time that $f(t)=0$.
 Therefore $x^*\, V(t)\, x \le M\, w(t)\text{ for all }  t_0<t<T$ and
 $$
 x^*\, V(t)\, x \ge M\, w(t)\quad\text{for all } \tfrac{d_0}R<t< t_0.
 $$
 Then
 $$
 \lim_{t\to\left(\frac{d_0}R\right)^+} x^* \, V(t)\, x
 \ge
 \lim_{t\to\left(\frac{d_0}R\right)^+} M\, w(t)
 = +\infty\, .
 $$
 Since $\frac{d_0}R>0$, this contradicts the existence of $V(t)$ for $t>0$.
 Hence such $t_0$ does not exist and
 $$
 x^* \, V(t)\, x \le M \, R\, \coth(R)\qquad \text{ for all } 1<t< T.
 $$

 Now suppose that there exist $t_1\in ]0,T-1[$ and $\lV z\rV=1$ such that
 $$
 z^*\, V(t_1)\, z < - M\, R\,\coth(R)=M\,R\,\coth(-R) .
 $$
 We compare $v(t)= z^*\, V(t)\, z$ with $M\, w_1(t)$, where
 $w_1(t)= R\, \coth\, (R\, t- c_0)$ is such that $M\, w_1(t_1)=v(t_1)$.
 Observe that $w_1(t)$ is defined for $t<\frac{c_0}{R}$ and that in
 this case $-R<R \,t_1-c_0<0$. Therefore
 $0<t_1<\frac{c_0}R$ and $\frac{c_0}R<t_1+1<T$.
  The same argument
 as above shows that
 $v(t)\le M\, w_1(t)$ for $t_1\le t<\frac{c_0}R$. Since
 $\lim_{t\to\left(\frac{c_0}R\right)^-}w_1(t)=-\infty$,
 this contradicts the fact that $V(t)$ is defined 
 on $[t_1,t_1+1]\supset[t_1,\frac{c_0}R]$.
 
 \qed

 \begin{Proposition}\label{growth}\quad
 
 Let $\La\subset T^*M\times\re$ and
 $H\in\cH(T^*M\times\re,\La,b_1,b_2)$ be a bounded
 hamiltonian. Let $\theta\in\La$ and suppose that
 the orbit $\Psi_{\re}(\theta)$ is inside $\La$ and
 is disconjugate.  
 \begin{enumerate}
 \item\label{g1} For $t\ge 0$ let 
 $Y_\theta(t):\V(\theta)\to \H(\psi_t(\theta))$ be
 defined by $Y_\theta(t)\,u:=d\pi(d\psi_t(0,u))$.
  Then for all $R>0$ there is $T=T(\theta,R,H)>0$ such that
 $$
 | Y_\theta(t)\,v|>R\,|v|\quad \text{ for all }\quad
 t>T \text{ and } v\in\V(\theta).
 $$
 \item\label{g2} Let $\B(\th):=\big\{\,\xi\in T_\th T^*M\;|\;
 \sup_{t\in\re}|d_\th\psi_t(\xi)|<+\infty\;\big\}$. Then
 $$
 \B(\th)\subset\E(\th)\cap\F(\th).
 $$
 \end{enumerate}
 
 If $H$ is autonomous, $X=(H_p,-H_x)$ is the hamiltonian vector field,
 $\Si=H^{-1}\{k\}$ and $k=H(\th)$ is a regular value of $H$, then
 \begin{enumerate}
 \setcounter{enumi}{2}
 \item\label{g3} $X(\th)\in\E(\th)\cap\F(\th)\subset T_\th\Si$.
 \item\label{g4} $H_p(\th)=d\pi(X(\th))\ne 0$.
 \end{enumerate}
 \end{Proposition}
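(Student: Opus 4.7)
The four parts build on each other, so the plan is to establish \eqref{g1} first, derive \eqref{g2} from it through a Lagrangian decomposition in $T_\theta T^*M=\E(\theta)\oplus\V(\theta)$, and combine \eqref{g2} with the symplectic identity $dH=-\om(X,\cdot)$ to obtain the autonomous statements \eqref{g3} and \eqref{g4}.

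For \eqref{g1}, disconjugacy guarantees that for every $t>0$ the Lagrangian subspace $d\psi_t(\V(\theta))$ is transverse to $\V(\psi_t\theta)$, hence it is the graph of a symmetric operator $U(t)\colon\H(\psi_t\theta)\to\V(\psi_t\theta)$ solving the Riccati equation~\eqref{riccati}. Proposition~\ref{BdRic} then yields $\lV U(t)\rV\le A$ for all $t\ge 1$, so the Jacobi field $(h(t),v(t))=d\psi_t(0,v)$ satisfies $v(t)=U(t)\,h(t)$ with $|v(t)|\le A|h(t)|$. To force $|h(t)|\to+\infty$, I would introduce the symmetric operator $S_\F(\vrt)$ whose graph is the Green bundle $\F(\vrt)$, and compare with the Jacobi field $(p(t),q(t))=d\psi_t(p_0,S_\F(\theta)p_0)\in\F(\psi_t\theta)$. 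Preservation of the symplectic form, together with the symmetry of $U(t)-S_\F(\psi_t\theta)$, collapses to the identity
\[
\big\langle h(t),\bigl(U(t)-S_\F(\psi_t\theta)\bigr)p(t)\big\rangle=\langle v,p_0\rangle.
\]
The Riccati monotonicity from~\cite{CI} combined with Proposition~\ref{BdRic} gives $\lV U(t)-S_\F(\psi_t\theta)\rV\to 0$, so choosing $p_0$ with $\langle v,p_0\rangle\ne 0$ and controlling $|p(t)|$ via the Green bundle Riccati estimate forces $|h(t)|\to+\infty$ at the required rate.

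For \eqref{g2}, let $\xi\in\B(\theta)$ and decompose $\xi=\xi_\E+\xi_\V$ in the transverse Lagrangian decomposition $T_\theta T^*M=\E(\theta)\oplus\V(\theta)$, valid since $\E\cap\V=\0$; write $\xi_\V=(0,u)$. Applying $d\psi_t$, the $\E$-component stays on the bounded graph of $S_\E(\psi_t\theta)$, while by \eqref{g1} the horizontal part $Y_\theta(t)u$ of $d\psi_t\xi_\V$ diverges whenever $u\ne 0$. The horizontal boundedness of $d\psi_t\xi$ forces the $\E$- and $\V$-contributions to cancel at the horizontal level, and substituting into the vertical boundedness yields $\bigl(U(t)-S_\E(\psi_t\theta)\bigr)Y_\theta(t)u=O(1)$. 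Since $U(t)-S_\E\ge S_\F-S_\E\ge 0$ with limit $S_\F-S_\E$, this pushes $Y_\theta(t)u$ into the kernel of $S_\F-S_\E$, i.e.\ asymptotically into the $\H$-projection of $\E\cap\F$. Pulling back by $d\psi_{-t}$ and using $\psi_t$-invariance of $\E\cap\F$ together with $\V\cap\E=\0$ then forces $u=0$; hence $\xi\in\E(\theta)$, and the time-reversed argument yields $\xi\in\F(\theta)$ as well.

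For \eqref{g3} and \eqref{g4}, the bounded-hamiltonian hypothesis together with $\psi_\re(\theta)\subset\La$ implies $\sup_{t\in\re}|X(\psi_t\theta)|<+\infty$, so $X(\theta)\in\B(\theta)$ and \eqref{g2} places $X(\theta)\in\E(\theta)\cap\F(\theta)$. The inclusion $\E\cap\F\subset T_\theta\Si$ follows because both $X$ and any $\xi\in\E$ lie in the Lagrangian subspace $\E$, giving $dH(\xi)=-\om(X,\xi)=0$. Finally, if $H_p(\theta)=0$ then $X(\theta)=(0,-H_x(\theta))\in\V(\theta)\cap\E(\theta)=\0$, so $X(\theta)=0$, which contradicts $dH(\theta)\ne 0$ since $k$ is a regular value. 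The main obstacle is \eqref{g1}: Proposition~\ref{BdRic} only supplies upper bounds on Riccati operators, whereas the conclusion is a lower growth rate for Jacobi fields. The Wronskian/Green-bundle identity above is designed precisely to convert the monotone Riccati convergence into the required divergence, but making it quantitative requires a careful choice of $p_0$ and control of $|p(t)|$ along the unstable bundle $\F$, which is the most delicate step.
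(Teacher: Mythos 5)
Your strategy for parts \eqref{g3} and \eqref{g4} matches the paper exactly, and your reduction of \eqref{g2} to \eqref{g1} is in the right direction, but the core of the argument — part \eqref{g1} — has a genuine gap. You assert that Riccati monotonicity together with Proposition~\ref{BdRic} gives $\lV U(t)-S_\F(\psi_t\theta)\rV\to 0$. This does not follow: monotone convergence $U_s(\vrt)\to S_\F(\vrt)$ as $s\to\infty$ holds \emph{pointwise} at each fixed $\vrt$, whereas here both the parameter and the base point move together ($U(t)=U_t(\psi_t\theta)$), and there is no compactness or uniformity hypothesis in Proposition~\ref{growth} to convert pointwise decay into decay along the orbit. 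Proposition~\ref{BdRic} only supplies uniform \emph{upper} bounds $\lV U(t)\rV,\lV S_\F\rV<A$, not convergence of the difference. Moreover, even granting the convergence, the Wronskian identity $\langle h(t),(U(t)-S_\F(\psi_t\theta))p(t)\rangle=\langle v,p_0\rangle$ gives $|h(t)|\,\lV U(t)-S_\F\rV\,|p(t)|\gtrsim 1$, and you would need $|p(t)|$ to stay bounded to force $|h(t)|\to\infty$ — but $p(t)$ is the horizontal part of a Jacobi field in the \emph{unstable} Green bundle $\F$, which one expects to blow up forward in time. You flag this as ``the most delicate step,'' and indeed it is where the plan does not close.

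The paper sidesteps both problems with an explicit matrix identity rather than an asymptotic comparison. Using Lemma~\ref{Jform} it writes the solution $H$ with $H(-1)=0$, $H(0)=I$ as $H=Y[D-\int_1^t Y^{-1}H_{pp}(Y^*)^{-1}ds]$ and conversely $Y=H[\int_0^t H^{-1}H_{pp}(H^*)^{-1}ds]$; the product of the two bracketed factors equals $I$, which shows the increasing family $\int_1^t Y^{-1}H_{pp}(Y^*)^{-1}ds$ is bounded above by $D$, so $M(t):=\int_t^\infty Y^{-1}H_{pp}(Y^*)^{-1}ds$ exists and tends to $0$. Setting $Z=YM$, $W=(Y^*)^{-1}[-I+U^*Z]$ (a Jacobi solution whose image is the Green bundle $\E$), one gets the clean identity $(Y^*)^{-1}M(t)^{-1}Y(t)^{-1}=S(t)^*-\bbS(t)$ with $S=UY^{-1}$, $\bbS=WZ^{-1}$. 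Proposition~\ref{BdRic} bounds the right-hand side by $2A$, which immediately yields $\lV Y(t)^{-1}\rV\le\sqrt{2A\lV M(t)\rV}$, and $M(t)\to 0$ finishes \eqref{g1} quantitatively without any appeal to convergence of Riccati solutions along the orbit. For \eqref{g2} the paper then argues directly: pick $\zeta_T\in d\psi_{-T}(\V(\psi_T\theta))$ with $d\pi\zeta_T=d\pi\xi$, note $(0,v_T)=\xi-\zeta_T\in\V(\theta)$ and $Y_\theta(T)v_T=d\pi(d\psi_T\xi)$ is bounded, so \eqref{g1} forces $v_T\to 0$ and hence $\zeta_T\to\xi$ while $E_T\to\E$. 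This replaces your more elaborate Riccati-cancellation argument (which, beyond being unnecessary, leans on the same unproven convergence of $U(t)-S_\E$ and a kernel/$\E\cap\F$ passage that would need substantial justification).
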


 \begin{Lemma}\label{Jform}\quad
 
 Let $(H_1,V_1)$ be a matrix solution of the Jacobi
 equation~\eqref{Jacobi} such that $\det H_1\ne 0$ for all $s\in[t_0,t]$
 and $\image(H_1,V_1)$ is a lagrangian subspace.
 
 Then any other matrix solution of~\eqref{Jacobi} is of the form
 \begin{align}
 H_2 &=H_1\left[D+\int_{t_0}^t H_1^{-1}\,H_{pp}\,(H_1^*)^{-1}\,K \;
 ds\right], \label{H2}\\
 V_2 &= (H_1^*)^{-1}\big[ K+ V_1^* H_2\big],
 \label{V2}
 \end{align}
 where $K$ and $D$ are constant matrices.
 
 Moreover $\image(H_2,V_2)$ is a lagrangian subspace iff $K^*D$ is
 symmetric.
 
 Conversely, for any $K$, $D$, formulas~\eqref{H2}, \eqref{V2} define solutions
 of~\eqref{Jacobi} on the largest interval on which $\det H_1\ne 0.$
 \end{Lemma}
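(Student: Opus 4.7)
The plan is to exploit a Wronskian-type conservation law for pairs of Jacobi solutions, then reduce the equation for $H_2$ to a simple integrable ODE by substituting $H_2=H_1B$ and using the Lagrangian property of $(H_1,V_1)$.

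First I would verify that if $(H_1,V_1)$ and $(H_2,V_2)$ are any two matrix solutions of \eqref{Jacobi}, then $W:=H_1^*V_2-V_1^*H_2$ is constant. A direct differentiation using the symmetry relations $H_{xp}=H_{px}^*$, $H_{pp}^*=H_{pp}$, $H_{xx}^*=H_{xx}$ gives $\frac{d}{dt}(V_1^*H_2) = -H_1^*H_{xx}H_2 + V_1^*H_{pp}V_2$ and $\frac{d}{dt}(H_1^*V_2) = V_1^*H_{pp}V_2 - H_1^*H_{xx}H_2$, so $\dot W=0$. Setting $K:=W$ and solving for $V_2$ (using $\det H_1\neq 0$ throughout the interval) yields \eqref{V2}.

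Next I would write $H_2=H_1\,B$ with $B(t)$ a matrix to be determined, $B(t_0)=:D$. Substituting into the Jacobi equation $\dot H_2=H_{px}H_2+H_{pp}V_2$, using the Jacobi equation $\dot H_1=H_{px}H_1+H_{pp}V_1$ to cancel the $\dot H_1 B$ term, and using that $(H_1,V_1)$ is Lagrangian (so $V_1^*H_1=H_1^*V_1$, which rearranges to $(H_1^*)^{-1}V_1^*H_1=V_1$), the cross term $H_{pp}(H_1^*)^{-1}V_1^*H_1B$ collapses to $H_{pp}V_1B$. What remains is
\[
H_1\dot B = H_{pp}(H_1^*)^{-1}K,
\]
and integration from $t_0$ gives exactly \eqref{H2}.

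For the Lagrangian statement I would compute $H_2^*V_2 = B^*K + B^*V_1^*H_1B$. The second term is symmetric because $V_1^*H_1$ is, so $H_2^*V_2$ is symmetric iff $B^*K$ is. Now $\frac{d}{dt}(B^*K)=\dot B^*K = K^*H_1^{-1}H_{pp}(H_1^*)^{-1}K$, which is manifestly symmetric; hence $B^*K-K^*B$ is constant, equal to $D^*K-K^*D$. Thus $\image(H_2,V_2)$ is Lagrangian iff $K^*D$ is symmetric. The converse is a direct verification: starting from constants $K,D$, define $H_2,V_2$ by \eqref{H2}, \eqref{V2}, differentiate, and retrace the above identities to recover \eqref{Jacobi}.

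The only delicate step I anticipate is the algebraic manipulation that turns the substitution $H_2=H_1B$ into a first-order equation for $B$; specifically, recognizing that the identity $(H_1^*)^{-1}V_1^*H_1=V_1$, which is nothing more than the Lagrangian condition on $(H_1,V_1)$, is exactly what is needed to make the nonhomogeneous term collapse. Once that is in hand everything else is bookkeeping.
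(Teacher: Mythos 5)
Your proof is correct and follows essentially the same route as the paper: the Wronskian-type constant $K=H_1^*V_2-V_1^*H_2$, the substitution $H_2=H_1 B$ (the paper calls it $Z$) reduced via the Lagrangian identity $V_1^*H_1=H_1^*V_1$ to $H_1\dot B=H_{pp}(H_1^*)^{-1}K$, and then integration. The only cosmetic difference is in the Lagrangian criterion: the paper integrates $V_2^*H_2=H_2^*(V_1H_1^{-1})H_2+K^*D+\int_{t_0}^t K^*H_1^{-1}H_{pp}(H_1^*)^{-1}K\,ds$ explicitly, whereas you observe that $\tfrac{d}{dt}(B^*K-K^*B)=0$ and evaluate at $t_0$; these are the same computation packaged slightly differently.
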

 
 \begin{proof}
 If $(H_i,V_i)$, $i=1,2$ are matrix solutions of the Jacobi equation,
 then
 \begin{equation}\label{wronsk}
 K:= H_1^*\,V_2-V_1^* H_2
 \end{equation}
 is a constant matrix. This can be seen directly by
 differentiating~\eqref{wronsk} and using~\eqref{Jacobi}, and
 corresponds to the preservation of the symplectic form.
 From~\eqref{wronsk} we get~\eqref{V2}.
 
 If $\rank(H,V)=n$, the subspace $\image(H,V)$ is lagrangian iff
 $V^*H$ is symmetric, or equivalently $S=V H^{-1}$ is symmetric (when
 $\det H\ne 0$).
 
 Let $(H_2,V_2)$ be a matrix solution of~\eqref{Jacobi}. Write
 \begin{equation}\label{Z2}
 H_2 = H_1\, Z.
 \end{equation}
 Using~\eqref{Jacobi} and~\eqref{V2} we get
 \begin{align}
 \dot H_2 &= \dot H_1 Z+ H_1 \dot Z
          = H_{px}\,H_1 Z+ H_{pp}\, V_1 Z+ H_1 \dot Z,
          \label{h21}\\
 \dot H_2 &= H_{px} H_2+ H_{pp} V_2
        \notag \\
        &= H_{px}\, H_1 Z 
       + H_{pp}\, (H_1^*)^{-1}\,\big[ K+ V_1^* H_1 Z\big].	  
       \label{h22}
 \end{align}
 Equating \eqref{h21} to \eqref{h22} and using that $V_1^* H_1= H_1^* V_1$ we obtain
 \begin{equation}\label{dZ2}
 \dot Z = H_1^{-1}\, H_{pp}\, (H_1^*)^{-1}\, K.
 \end{equation}
 From~\eqref{Z2} and ~\eqref{dZ2} we get~\eqref{H2}.
 
 Using~\eqref{V2}, \eqref{H2} and that $K$ is constant, we have that 
 \begin{align*}
 V_2^* H_2 &=\big[ H_2^* V_1+ K^*\big] H_1^{-1} H_2 \\
 &= H_2^*\,(V_1 H_1^{-1})\,H_2 + K^* H_1^{-1} H_2 \\
 &= H_2^* \,(V_1 H_1^{-1})\, H_2 + K^* D + \int_{t_0}^t K^* H_1^{-1} \,
 H_{pp}\,(H^*_1)^{-1} K\; ds. 
 \end{align*}
 Since $V_1 H_1^{-1}=S$ is symmetric, $V_2^* H_2$ is symmetric iff
 $K^* D$ is symmetric.
 
   Direct calculation shows that~\eqref{H2},~\eqref{V2} is a solution
   of~\eqref{Jacobi}.
   
 \end{proof}

 \noindent{\bf Proof of Proposition~\ref{growth}:}\quad
 
 \eqref{g1}. Let $(H,V)$ be the solution of the Jacobi
 equation~\eqref{Jacobi} with $H(-1)=0$, $H(0)=I$. Then
 $\image(H,V)=d\psi_t(\V(\psi_{-1}(\th)))$ is lagrangian and $\det
 H(t)\ne 0$ if $t\ne -1$. Write $Y(t)=Y_\th(t)$ and let $(Y,U)$ be the
 solution of~\eqref{Jacobi} with $Y(0)=0$, $U(0)=I$. Using
 lemma~\eqref{Jform} and \eqref{wronsk} with $K=Y^* V-U^* H\equiv -I$, $D=Y(1)^{-1}
 H(1)$, $t_0=1$, we get that
 $$
 H=Y\,\left[ D-\int_1^t Y^{-1}\,H_{pp}\,(Y^*)^{-1}\;ds\right]
 \qquad\text{ for }\quad t>0,
 $$ 
 because in~\eqref{H2}, \eqref{V2}, $H_2(1)=H(1)$, $V_2(1)=V(1)$.
 
 From lemma~\ref{Jform} with $t_0=0$ (here $K$ changes sign) we also
 get
 $$
 Y = H \left[\int_0^t H^{-1}\,H_{pp}\,(H^*)^{-1}\;ds\right]
 \qquad\text{ for }\quad t>0.
 $$
 Therefore 
 $$
 \left[\int_0^t H^{-1}\,H_{pp}\,(H^*)^{-1}\;ds\right]
 \left[ D-\int_1^t Y^{-1}\,H_{pp}\,(Y^*)^{-1}\;ds\right]
 =I\qquad\text{ for all }\quad t>0.
 $$
 The first factor is positive definite. Then the second factor, being
 the inverse of the first factor, is also positive definite. The
 family of matrices $t\mapsto\int_1^t Y^{-1}\,H_{pp}\,(Y^*)^{-1} ds$
 is increasing in the order $\prec$ and is bounded above by $D$.
 Then the limit
 $$
 M(a):=\int_a^\infty Y^{-1}\,H_{pp}\,(Y^*)^{-1}\;ds,\qquad a>0
 $$
 exists and is finite.
 
 Let 
 \begin{gather*}
 Z(t)=Y(t)\,M(t),\\
 W=(Y^*)^{-1}\,\big[-I+U^* Z\,\big].
 \end{gather*}
 Then $(Z,W)$ is the solution of~\eqref{Jacobi} with $\image(Z,W)$
 lagrangian given by lemma~\ref{Jform} for $t_0=+\infty$, $K=-I$,
 $D=0$. This can be seen directly or by taking the limit of the
 solutions\footnote{In fact $(Z,W)$ is a solution of the Jacobi
 equation corresponding to the ``stable'' Green bundle 
 \linebreak
 $\E=\image(Z,W)$.}
  $(Z_c,W_c)$ with $t_0=c$, $K=-I$, $D=0$ when $c\to+\infty$.

 Writing $S=U Y^{-1}$ and $\bbS=W Z^{-1}$ we get 
 $$
 (Y^*)^{-1} M^{-1} Y^{-1}=(Y^*)^{-1} Z^{-1}
 =-W Z^{-1}+(Y^*)^{-1} U^* = -\bbS+S^*,\qquad\text{ for } t>0.
 $$
 By proposition~\ref{BdRic}, $\lV S(t)\rV<A$ and $\lV\bbS(t)\rV<A$ for
 all $t>1$.
  Let  $\la(t)$ be the largest eigenvalue of $M(t)$, $t>0$. Since
  $M(t)$ is positive definite $\la(t)>0$ and $\lV M(t)\rV=\la(t)$.
  Moreover, if $|x|=1$,
  \begin{gather*}
  2A\ge x^*\, Y^*(t)^{-1} \,M(t)^{-1} \,Y(t)^{-1}\, x
  \ge \tfrac 1{\la(t)}\;|Y(t)^{-1}x|^2,\\
  |Y(t)^{-1}x|\le\sqrt{2A}\,\lV M(t)\rV^{\frac 12}
  \qquad\text{ if }\quad |x|=1,\quad t>0.
  \end{gather*}
  Then if $|v|=1$,
  $$
  |Y(t)\,v|\ge\frac 1{\lV Y(t)^{-1}\rV}
  \ge\frac 1{\sqrt{2A}\,\lV M(t)\rV^{\frac 12}}
  \qquad \text{ if }\quad t>0,\quad |v|=1.
  $$
  Since $M(t)\to 0$ when $t\to+\infty$, this implies item~\eqref{g1}.

  \eqref{g2}.  Suppose that $\xi\in\B(\th)$. Let $\zeta_T\in
  E_T:=d\psi_{-T}(\V(\psi_T(\th)))$ be such that
  $d\pi(\zeta_T)=d\pi(\xi)$. Then $(0,v_T):=\xi-\zeta_T\in\V(\th)$. We
  have that $Y_T(\th)\; v_T =d\pi(d_\th\psi_T(\xi))$ is bounded on
  $T$. By item~\eqref{g1}, $\lim_{T\to+\infty}v_T=0$. Thus
  $\lim_{T\to+\infty}\zeta_T=\xi$. Since $\lim_{T\to+\infty}E_T=\E$, 
  we have that
  $\xi=\lim_{T\to+\infty}\zeta_T\in\E$.
  
  Similarly, if $\eta_T\in d\psi_T(\V(\psi_{-T}(\th)))$ is such that
  $d\pi(\eta_T)=d\pi(\xi)$ then $\xi=\lim_{T\to+\infty}\eta_T\in\F$. 

  \eqref{g3}. From item~\eqref{g2},
  $X(\th)\in\B(\th)\subset\E(\th)\cap\F(\th)$. From the hamiltonian
  equation $\om(X,\,\cdot\,)=-dH$, we have that
  $T_\th\Si=\{\,\xi\;|\;\om(X,\xi)=0\,\}$. Since $\E(\th)$ is
  lagrangian and $X(\th)\in\E(\th)$, we have that $\E(\th)\subset
  T_\th\Si$. Similarly, $\F(\th)\subset T_\th\Si$.
  
  \eqref{g4}. If $H_p(\th)=0$ then $X(\th)\in\V(\th)\cap \E(\th)=\{0\}$.
  This implies that $dH(\th)=0$,
   which contradicts the regularity of $k$.
    \qed

\section{The index form}

Let $\th\in T^*M$ and $E\subset T_\th T^*M$ be a lagrangian subspace.
Write $E(t):=d\psi_t(E)$. Suppose that $E(t)\cap\V(\psi_t(\th))=\0$
for all $t\in]a,b[$. At the endpoints of the interval $]a,b[$ the
subspace $E(t)$ may intersect the vertical $\V$. Let $H(t)$, $V(t)$ be
a matrix solution of the Jacobi equations~\eqref{Jacobi} such that
$E(t)=\image(H(t),V(t))$, with $\det H(t)\ne 0$ for all $t\in]a,b[$.
Since $E(t)$ is lagrangian, the corresponding solution of the Riccati
equation $S(t)=V(t)H(t)^{-1}$ is symmetric. In particular
\begin{equation}\label{sym}
H(t)^*V(t)=V(t)^*H(t).
\end{equation}

Let
$$
\cH^1_\th[a,b]:=\Big\{\,\xi:[a,b]\to TM\,\Big|\,
\pi\circ\xi(t)=\pi\circ\psi_t(\th),\;\int_a^b|\xi'(s)|^2\,ds<+\infty\,\Big\}.
$$
Given two vector fields $\xi,\,\eta\in\cH^1_\th[a,b]$ along
$t\mapsto\pi(\psi_t(\th))$ define $\zeta,\,\rho\in\cH^1_\th[a,b]$ by
\linebreak
$\xi(t)=H(t)\,\zeta(t)$ and $\eta(t)=H(t)\,\rho(t)$. Recall that the
index form is
\begin{equation}\label{index2}
I(\xi,\eta)=\int_a^b \big[\,\dxi\, L_{vv}\,\deta +\dxi\, L_{vx}\,\eta 
  +\xi\, L_{xv}\,\deta  +\xi L_{xx}\eta\,\big] \;dt.
\end{equation}

\begin{Proposition}\label{Iform}
If the hamiltonian $H$ is convex and $C^2$ then
 \begin{align}
 I(\xi,\eta)
 &=\int_a^b (H\zeta')^*\, (H_{pp})^{-1} (H\rho')\;dt
 + (H \zeta)^* (V\rho)\big\vert_a^b
 \notag
 \\
 &=\int_a^b (H\zeta')^*\, (H_{pp})^{-1} (H\rho')\;dt
 + \xi\, S\,\eta\big\vert_a^b.
 \label{info0}
 \end{align}
 
  If $a=t_0<t_1<\cdots<t_N=b$ is a partition of $[a,b]$
 and for the integral we use different non-vertical
 lagrangian subspaces $E_i(t)=d\psi_{t-t_i}(E_i)$
 on each subinterval $]t_i,t_{i+1}[$ the formula becomes
 \begin{equation}\label{info}
  I(\xi,\eta)
 =\sum_{i=1}^N\int_{t_{i-1}}^{t_i} 
 (H_i\zeta_i')^*\, (H_{pp})^{-1} (H_i\rho_i')\;dt 
 + \sum_{i=1}^N(H_i \zeta_i)^* (V_i\rho_i)\Big\vert_{t_{i-1}}^{t_i},
 \end{equation}
 where $\xi = H_i \zeta_i$ and $\eta = H_i \rho_i$ on
 $[t_{i-1},t_i]$.
\end{Proposition}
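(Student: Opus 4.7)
The plan is to verify \eqref{info0} by direct substitution, using three ingredients: (i) the Legendre correspondence between $L$- and $H$-derivatives, (ii) the Jacobi equations for the matrix solution $(H,V)$, and (iii) the lagrangian condition $H^*V=V^*H$ (equivalent to symmetry of $S=VH^{-1}$).

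First, I would translate between lagrangian and hamiltonian data. The Legendre transform gives $H_{pp}=L_{vv}^{-1}$, $L_{vx}=-L_{vv}H_{px}$ (so $L_{xv}=-H_{xp}L_{vv}$), and $L_{xx}=-H_{xx}+H_{xp}H_{pp}^{-1}H_{px}$. Introduce the momentum variation along $\xi$,
$$p_\xi:=L_{vx}\xi+L_{vv}\dxi,$$
so that $\dxi=H_{px}\xi+H_{pp}p_\xi$. When $\xi=H\zeta$, the first Jacobi equation $\dot H=H_{px}H+H_{pp}V$ gives $\dxi=H_{px}\xi+H_{pp}V\zeta+H\zeta'$, hence
$$p_\xi=V\zeta+L_{vv}H\zeta',\qquad p_\eta=V\rho+L_{vv}H\rho'.$$

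Second, I would rewrite the integrand of $I(\xi,\eta)$. Regrouping,
$$\dxi\,L_{vv}\deta+\dxi\,L_{vx}\eta+\xi\,L_{xv}\deta+\xi\,L_{xx}\eta=\dxi^*p_\eta+\xi^*(L_{xv}\deta+L_{xx}\eta).$$
Using $L_{vv}\deta=p_\eta-L_{vx}\eta$ and the identities above, a short computation yields $L_{xv}\deta+L_{xx}\eta=-H_{xp}p_\eta-H_{xx}\eta$. Expanding $\dxi^*=\xi^*H_{xp}+p_\xi^*H_{pp}$ then collapses the $\xi^*H_{xp}p_\eta$ terms and leaves
$$\text{integrand}\;=\;p_\xi^* H_{pp}p_\eta-\xi^*H_{xx}\eta.$$
Substituting the formulas for $p_\xi,p_\eta$ and using $H_{pp}L_{vv}=I=L_{vv}H_{pp}$ together with $L_{vv}H_{pp}L_{vv}=(H_{pp})^{-1}$ gives
$$p_\xi^*H_{pp}p_\eta=\zeta^*V^*H_{pp}V\rho+\zeta^*V^*H\rho'+(H\zeta')^*V\rho+(H\zeta')^*(H_{pp})^{-1}(H\rho').$$

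Third, I would match this against a total derivative. Computing $\tfrac{d}{dt}\bigl[(H\zeta)^*(V\rho)\bigr]$ using $\dot H=H_{px}H+H_{pp}V$ and $\dot V=-H_{xx}H-H_{xp}V$, the cross terms $\zeta^*H^*H_{xp}V\rho$ cancel and one obtains
$$\tfrac{d}{dt}\bigl[(H\zeta)^*(V\rho)\bigr]=(H\zeta')^*V\rho+\zeta^*V^*H_{pp}V\rho-\zeta^*H^*H_{xx}H\rho+\zeta^*H^*V\rho'.$$
Subtracting this from the integrand, what remains is
$$\zeta^*(V^*H-H^*V)\rho'+(H\zeta')^*(H_{pp})^{-1}(H\rho'),$$
and the lagrangian symmetry \eqref{sym} kills the first summand. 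Integrating from $a$ to $b$ gives the first line of \eqref{info0}; the second line follows from $S=VH^{-1}$, since $(H\zeta)^*(V\rho)=\xi^*S\eta$.

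For \eqref{info}, apply \eqref{info0} separately on each $[t_{i-1},t_i]$ with its own non-vertical lagrangian subspace $E_i$ and sum the boundary contributions. The main obstacle is purely bookkeeping in the algebra—keeping transposes straight and seeing how the Legendre identities collapse the hamiltonian cross terms; the only place where "lagrangian" is actually used is the vanishing of $V^*H-H^*V$ against $\rho'$, and without it an extra non-boundary term would survive.
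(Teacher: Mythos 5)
Your proof is correct, and it reaches \eqref{info0} by a slightly different organization of the algebra than the paper. The paper stays entirely on the lagrangian side: from the Legendre transform it derives $V=L_{vx}H+L_{vv}H'$ and $V'=L_{xx}H+L_{xv}H'$, substitutes $\xi=H\zeta$ into the integrand of \eqref{index2} (taking $\eta=\xi$, the bilinear case following by polarization), regroups terms using these two identities, and invokes $H^*V=V^*H$ once to recognize the total derivative $(V\zeta\cdot H\zeta)'$. You instead introduce the momentum variation $p_\xi=L_{vx}\xi+L_{vv}\dxi$ and first rewrite the lagrangian second-variation integrand in the hamiltonian normal form $p_\xi^*H_{pp}p_\eta-\xi^*H_{xx}\eta$; only after that do you substitute $\xi=H\zeta$, $p_\xi=V\zeta+L_{vv}H\zeta'$, and compare term-by-term with $\tfrac{d}{dt}\bigl[(H\zeta)^*(V\rho)\bigr]$, again using $H^*V=V^*H$ to kill the surviving non-boundary term. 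Both routes use the same ingredients (Legendre identities, the Jacobi equations for $(H,V)$, and the lagrangian symmetry \eqref{sym}); yours buys a cleaner intermediate stopping point, since $p_\xi^*H_{pp}p_\eta-\xi^*H_{xx}\eta$ is the standard hamiltonian index-form integrand and makes the role of $H_{pp}$ manifest before any matrix solutions are brought in, while the paper's route keeps the lagrangian data $L_{vv},L_{vx},L_{xx}$ in view throughout. Your treatment of \eqref{info} — applying \eqref{info0} on each subinterval and summing boundary contributions — is exactly what is needed (and is in fact left unstated in the paper's proof).
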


 \begin{proof}
 Let $\cF(x,v)=(x,L_v(x,v))$ be the Legendre transform. We have that
 $$
 D\cF\begin{bmatrix} H\\  H'\end{bmatrix}
 =\begin{bmatrix} 
   I      &   0 \\
   L_{vx} &  L_{vv}
   \end{bmatrix}
   \begin{bmatrix} H \\ H' \end{bmatrix}
  =\begin{bmatrix} H \\ V \end{bmatrix} .
 $$
 Therefore
 \begin{equation}\label{VHd}
 V = L_{vx} \,H + L_{vv}\, H'.
 \end{equation}
 Since the maximum in~\eqref{hami} is obtained in $L_v=p$ and also
 $v=H_p$,
 $$
 H(x,L_v(x,v))= v\cdot L_v(x,v)-L(x,v).
 $$
 Differentiating with respect to $x$ we get
 $$
 H_x(x,L_v(x,v))=-L_x(x,v).
 $$
 Also
 \begin{equation}\label{Lxx}
 H_{xx} + H_{xp}\; L_{vx} = - L_{xx}.
 \end{equation}
 From the Jacobi equation~\eqref{Jacobi},
 \begin{equation}\label{Vjac}
 V'= -H_{xx}\;H-H_{xp}\; V.
 \end{equation}
 Replacing~\eqref{VHd} and~\eqref{Lxx} in~\eqref{Vjac} we get
 \begin{equation}\label{VLxx}
 V'=L_{xx}\;H- H_{xp}\;L_{vv}\;H'.
 \end{equation}
 Since the inverse of the Legendre transform is
 $\cF^{-1}(x,p)=(x,H_p(x,p))$, we have that
 $$
   \begin{bmatrix} 
   I      &   0 \\
   H_{px} &  H_{pp}
   \end{bmatrix}
   \begin{bmatrix} 
   I      &   0 \\
   L_{vx} &  L_{vv}
   \end{bmatrix}
   =
   \begin{bmatrix} 
   I      &   0 \\
   0      &   I
   \end{bmatrix}.
 $$
 Therefore 
 \begin{gather*}
 H_{px}+H_{pp}\; L_{vx} = 0
 \qquad\text{ and } \qquad
 L_{vv}= H_{pp}^{-1}.
 \end{gather*}
 Replacing $H_{xp}=H_{px}^*= -L_{xv}\;H_{pp}$ in~\eqref{VLxx} we get
 \begin{equation}\label{Vd}
 V' = L_{xx} H + L_{xv} H'.
 \end{equation}
 
 Write $\xi(t)= H(t)\,\zeta(t)$. Using~\eqref{VHd} and~\eqref{Vd}, the
 integrand for $I(\xi,\xi)$ 
 in~\eqref{index2} 
 can be written as
 \begin{equation*}
 (L_{vv}\,H\,\zeta'+V\,\zeta)\cdot(H'\, \zeta+H\,\zeta'\,)
 +(V'\,\zeta+L_{xv}\,H\,\zeta'\,)\cdot H\,\zeta\,.
 \end{equation*}
 Since $(L_{vv}\,H\,\zeta')\cdot(H' \zeta)
 =(H\zeta')\cdot ( L_{vv}\, H' \zeta)$ and 
 $(L_{xv}\, H \zeta')\cdot(H\zeta)=(H \zeta')\cdot(L_{vx}\, H\zeta)$,
 using~\eqref{VHd} 
 the integrand can be written as
 $$
 L_{vv}\, H \zeta'\cdot H \zeta'+H \zeta'\cdot V\zeta
 +V\zeta\cdot H'\zeta+V\zeta\cdot H\zeta'+V'\zeta\cdot H\zeta.
 $$
 Using~\eqref{sym} we have that $H\zeta'\cdot
 V\zeta=V^*H\zeta'\cdot\zeta=H^* V\zeta'\cdot \zeta
 =V\zeta'\cdot H\zeta$. Hence the integrand in~\eqref{index2} is
 $L_{vv}\, H\zeta'\cdot H\zeta'+(V\zeta\cdot H\zeta)'$.
 Therefore
 $$
 I(\xi,\xi)=\int_a^b (L_{vv}\, H\zeta'\cdot H\zeta')\;dt
 +H\zeta\cdot V\zeta\big|_0^T
 \qquad\text{ if } \quad\xi=H\,\zeta\in\cH^1_\th[a,b].
 $$
 Since $L_{vv}=H_{pp}^{-1}$ this formula implies the proposition.
 
 \end{proof}
 
 For $\th\in T^*M$ and $T>0$ write
 $$
 \Om(\th,T):=\big\{\;\xi:[0,T]\to TM\;\big|\;\xi\in\cH^1_\th[0,t],\;
 \xi(0)=\xi(T)=0\;\big\}.
 $$

 \begin{Lemma}\label{I+}
 The orbit segment $\psi_{[0,T]}(\th)$ is disconjugate if and
 only if the index form $I$ on $\Om(\th,T)$ is positive definite.
 \end{Lemma}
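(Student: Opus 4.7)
I prove both implications. The direction ``$I$ positive definite $\Rightarrow$ disconjugacy'' is elementary, while the converse applies Proposition~\ref{Iform} with a reference lagrangian subspace whose orbit remains transverse to the vertical throughout $[0,T]$.

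For the implication ``$I>0 \Rightarrow$ disconjugacy'', I argue by contraposition. Suppose there exist $0 \le s_1 < s_2 \le T$ and a nonzero Jacobi field $J$ along $\pi \circ \psi_t(\th)$ with $J(s_1) = J(s_2) = 0$. Extending $J$ by zero on $[0,T]\setminus[s_1,s_2]$ gives $\xi \in \cH^1_\th[0,T]$ with $\xi(0) = \xi(T) = 0$, hence $\xi \in \Om(\th,T) \setminus \0$. Integration by parts in~\eqref{index2} on $[s_1,s_2]$, using the linearized Euler--Lagrange equation satisfied by $J$ together with the endpoint conditions $J(s_i)=0$, yields $I(\xi,\xi) = I(J,J)|_{[s_1,s_2]} = 0$, contradicting positive definiteness.

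For the converse, suppose $\psi_{[0,T]}(\th)$ is disconjugate. The plan is to apply~\eqref{info0} with a lagrangian subspace $E \subset T_\th T^*M$ whose translate $d\psi_t(E)$ is transverse to $\V(\psi_t(\th))$ on the closed interval $[0,T]$. A natural choice is $E := d\psi_\de(\V(\psi_{-\de}(\th)))$ for a small $\de > 0$, since $d\psi_t(E) \cap \V(\psi_t(\th)) = \0$ for $t \in [0,T]$ is equivalent to $\psi_{-\de}(\th)$ being non-conjugate to $\psi_t(\th)$. Producing $\de$ amounts to extending disconjugacy from $[0,T]$ to $[-\de,T]$, which follows from two facts: the set of conjugate pairs is closed away from the diagonal (a continuous determinant condition on a matrix solution of~\eqref{Jacobi}), and near the diagonal the expansion $H(t) \sim (t-s)H_{pp}$ for the vertical matrix solution based at time $s$ (a consequence of strict convexity of $H$) excludes conjugacy for $|t-s|$ small.

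With such $\de$ fixed, let $(H,V)$ be the matrix solution of~\eqref{Jacobi} with $H(-\de) = 0$, $V(-\de) = I$; disconjugacy on $[-\de,T]$ yields $\det H(t) \ne 0$ for all $t \in [0,T]$. For $\xi \in \Om(\th,T)$, set $\zeta(t) := H(t)^{-1}\xi(t) \in \cH^1_\th[0,T]$; then $\zeta(0) = \zeta(T) = 0$ and the boundary term $\xi\,S\,\xi|_0^T$ in~\eqref{info0} vanishes, giving
$$
I(\xi,\xi) = \int_0^T (H\zeta')^*\,(H_{pp})^{-1}\,(H\zeta')\,dt \;\ge\; 0.
$$
Equality forces $H\zeta' \equiv 0$, so $\zeta$ is constant and, since $\zeta(0)=0$, we have $\zeta \equiv 0$ and $\xi \equiv 0$. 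Hence $I$ is positive definite on $\Om(\th,T)$. The principal technical obstacle is the extension step: upgrading disconjugacy on $[0,T]$ to disconjugacy on $[-\de,T]$ for some $\de > 0$, for which the two ingredients above must be combined via compactness on the orbit segment.
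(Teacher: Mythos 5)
Your proof is correct and follows essentially the same route as the paper: for the converse you choose $E=d\psi_\de(\V(\psi_{-\de}(\th)))$, invoke the extension of disconjugacy to $[-\de,T]$, and then apply formula~\eqref{info0} with vanishing boundary term. The only differences are cosmetic: in the ``$I>0 \Rightarrow$ disconjugate'' direction you integrate by parts against the Jacobi equation directly rather than invoking Proposition~\ref{Iform} with $H(t)=Y_{\th_a}(t-a)$ (for which $\zeta$ is constant and the conclusion $I(\xi,\xi)=0$ is immediate), and for the extension of disconjugacy you sketch the compactness argument instead of citing \cite[Prop.~1.15]{CI} as the paper does; both of those ingredients are exactly what the citation contains, so the substance is the same.
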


 \begin{proof}
 If $\psi_{[0,T]}(\th)$ has a conjugate point then there are
 $0<a<b<T$ and $v\in T_{\psi_a(\th)}M$, $|v|=1$ such that
 $d\pi\,d_{\th_a}\psi_{b-a}(0,v)=0$, where $\th_a=\psi_a(\th)$.
 Let $\xi(t)=0$ if $t\notin[a,b]$, and
 $\xi(t)=d\pi\,d_{\th_a}\psi_{t-a}(0,v)$ if $t\in[a,b]$. Using
 $H(t)=d\pi\circ d_{\th_a}\psi_{t-a}|_{\V(\th_a)}=Y_{\th_a}(t-a)$ in
 proposition~\ref{Iform}, we have that $I(\xi,\xi)=0$.
 
 If $\psi_{[0,T]}(\th)$ is disconjugate then there is $\de>0$ such
 that $\psi_{[-\de,T+\de]}$ is disconjugate (see for
 example~\cite[prop.~1.15]{CI}). Let
 $H(t)=d\pi\circ d\psi_{t+\de}|_{\V(\th_{-\de})}=Y_{\th_{-\de}}(t+\de)$.
 Let $\xi\in\Om(\th,T)$ with $\xi(t)\not\equiv 0$.
 Since $\xi(0)=0$, $\xi(T)=0$ and $\det H(t)\ne 0$ for all $t\in[0,T]$, 
 if we write $\xi= H\,\zeta$ then $\zeta'\not\equiv 0$.
 Using proposition~\ref{Iform}, $I(\xi,\xi)>0$.
 
 \end{proof}

\bigskip

   \section{The autonomous case.}\label{Saut}
   
   A time dependent hamiltonian preserves a natural smooth foliation
   transversal to the flow given by fixing the time coordinate.
   But an autonomous hamiltonian on an energy level $\Si$ may 
   not have a continuous subbundle of $T\Si$ which is invariant
   under $d\psi_t$ and transversal to the flow direction. The
   horocycle flow is an example of this phenomenon, 
   see ~\cite[example~A.1]{CI}. In this section we show that there is
   a reparametrization of the flow on a neighbourhood of any orbit 
   which is the flow of a time dependent hamiltonian.
   This will allow us to get hyperbolicity.
   
   \subsection{The transversal behaviour.}\quad

   Let $\Si$ be a compact and regular energy level, 
   $\th_0\in\Si$ and suppose that
   the orbit $\psi_\re(\th_0)$ is disconjugate. Let
   $\th\in\psi_\re(\th_0)$. By~\ref{growth}.\eqref{g4},
   $H_p(\th)=d\pi(\X(\th))\ne 0$. Let
   $$
   \N(\th):=\{\,\xi\in T_\th\Si\;|\;
   \langle d\pi(\xi),d\pi (X(\th))\rangle_{\pi(\th)}=0\,\}.
   $$
   Then $\N(\th)\oplus\langle X(\th)\rangle=T_\th\Si$. Let
   $(q_1,\ldots,q_n,t)$ be a smooth coordinate system on $M\times\re$
   along\footnote{The coordinate system is taken in $M\times\re$ in order 
   to deal with possible self intersections of the projection $\pi(\psi_\re(\th_0))$ of the orbit to $M$.}
   the projection $(\pi(\psi_t(\th_0)),t)$ of the orbit of
   $\th_0$ such that 
   \begin{enumerate}
   \item $\partial/\partial  q_1|_{(\pi(\psi_t(\th_0)),t)}
   =d\pi\,X(\psi_t(\th_0))$.
   \item $\partial/\partial q_2,\ldots,\partial/\partial q_n$ is an
   orthonormal basis for
   $d\pi(\N(\psi_t(\th_0)))=\langle\partial/\partial q_1\rangle^\perp$
   along $(\pi(\psi_t\,\th_0),t)$.
   \end{enumerate}

    Write $p_i=dq_i|_{T_qM}$, $i=1,2,\ldots,n$. We have that
    $$
    1=\tfrac{d}{dt}q_1|_{(\pi(\psi_t \th_0),t)}
    =H_{p_1}(\psi_t(\th_0))\ne 0.
    $$
    Then the equation $H(q_1,q_2,\ldots,q_n;p_1,p_2,\ldots,p_n)=k$
    can be solved locally for $p_1$:
    $$
    p_1= - K(T,Q,P),
    $$ 
    where $P=(p_2,\ldots,p_n)$, $Q=(q_2,\ldots,q_n)$, $T=q_1$.
    This solution can be extended to a simply connected neighbourhood
    $W$ of the orbit $\{(\psi_t(\th_0),t)\;|\;t\in\re\}\subset
    T^*M\times\re$. 

    Let $\phi_T$ be the reparametrization of the flow $\psi_t$ on $W$
    such that it preserves the foliation
    $q_1=T=\text{constant}$. In particular $d_{\th_0}\phi_T$ preserves
    the transversal bundle $\N(\phi_T(\th_0))$ along the orbit of
    $\th_0$. Define $\tau(T,\th)$ by
    $\phi_T(\th)=\psi_{\tau(T,\th)}(\th)$. Then $\tau(T,\th_0)=T$ for
    all $T\in\re$.
    
    In~\cite[section~2]{CI} the following statements are proven:
    \begin{enumerate}
    \item\label{a1} For $\th\in W$,
    $$
    \phi_T(\th)=(T=q_1,Q(T),p_1(T),P(T);\tau(T,\th))\in W,
    $$ 
    where $T\mapsto(T,Q(T),P(T))$ are the orbits of the hamiltonian
    $K(T,Q,P)$  defined by $H(T,Q,-K,P)\equiv k$.
    
    In fact the hamiltonian vector field for $K$ is
    $$
    Y=(K_P,-K_Q) = \la\;(H_P,-H_Q),  \qquad \la=H_{p_1}^{-1}.
    $$

    \item\label{a2} If $\P(\th):T_\th\Si\to\N(\th)$ 
    is the projection along the
    flow direction $\P(\xi+a\,X)=\xi\in\N(\th)$, then
    $$
    d\phi_T(\xi)=\P\big(d\psi_{\tau(T,\th)}(\xi)\big),
    \qquad\forall \xi\in\N(\th).
    $$
    \item\label{a3} Let 
    $$
    \H=\begin{bmatrix}
     H_{pq} & H_{pP}\\
     -H_{Qq} &-H_{QP}
     \end{bmatrix}_{(2n-1)\times(2n-1)},
     \quad
     \K=\begin{bmatrix}
     \b0_{1\times n} & \b0_{1\times(n-1)}\\
     K_{Pq} & K_{PP} \\
     -K_{Qq} & -K_{QP}
     \end{bmatrix}_{(2n-1)\times(2n-1)}.
    $$
    be the matrices corresponding to the Jacobi equations for the
    hamiltonians $H$ and $K$ in the coordinates $(T=q_1,Q,P)$.
    If $\th_T:=\psi_T(\th_0)$, then
    \begin{enumerate}
    \item\label{Kfor} If $\xi\in\N(\th_T)$ then
          $\K(\th_T)\,\xi=\P(\th_T)\,\H(\th_T)\,\xi$.
    \item\label{Konv} If $(0,V)\in\N(\th_T)\setminus\0$, then
          $$
	  V^*\,K_{PP}(\th_T)\,V=V^*\,H_{PP}(\th_T)\, V>0.
	  $$ 	  
    \item\label{Kbd} The operator $\K(\th_T)|_{\N(\th_T)}$ is uniformly bounded
    on $T\in\re$.	  
    \end{enumerate}
    \end{enumerate}
    
    We obtain that $K$ is a bounded hamiltonian. Indeed, the constant $b_2$ is the
    same as the constant for $H$ along the orbit of $\th_0$ by
    item~\eqref{Konv}. Items~\eqref{a1}, \eqref{Kbd} and $\max_\Si|p_1|$
    give a bound for the $C^3$ norm of $K$. In fact item~\eqref{Kbd}
    follows from item~\eqref{Kfor} because the angle $\varangle(X,\N)$
    is bounded away from zero by the compacity of $\Si$. Therefore the
    bound $b_1$ can also be taken uniform for all $\th_0\in\Si$.
     
    In particular we can apply the results of the previous sections to
    the hamiltonian $K$. The following statements are also proved
    in~\cite[corollary~2.3]{CI}:
    \begin{enumerate}
    \setcounter{enumi}{3}
    \item If the orbit $\psi_\re(\th_0)$ is disconjugate then 
    the orbit of $\th_0$ under $\phi_T$ is disconjugate.
    
    \item The Green bundles for $d\psi_s|_{\N(\th_T)}$: 
    \begin{align*}
    \E^\top(\th_T)&=\lim_{s\to+\infty}d\psi_{-s}
    \big(\V(\th_{T+s})\cap\N(\th_{T+s})\big)\\
    \F^\top(\th_T)&=\lim_{s\to+\infty}d\psi_{s}
    \big(\V(\th_{T-s})\cap\N(\th_{T-s})\big)
    \end{align*}
    satisfy: $\E^\top=\E\cap\N$ and $\F^\top=\F\cap\N$.
    \end{enumerate}

    \subsection{The graph transform.}\quad
    
    Let $\Psi_t=\P\circ d\psi_t$ be action on $\N$ induced by
   $d\psi_t$. Along each disconjugate orbit $\th\in\La$,
   $\Psi_t=d_\th\phi_t|_\N$, where $\phi_t$ is the reparametrization
   of $\psi_t$ described above.

   \begin{Proposition}\label{Xhyp}
   If $\La$ is compact and $\Psi_t|_\La$ is hyperbolic, then
   $d\psi_t|_\La$ is hyperbolic. 
   \end{Proposition}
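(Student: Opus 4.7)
The plan is a graph transform: starting from the hyperbolic splitting $\N|_\La = E^s \oplus E^u$ of $\Psi_t$, I would produce $d\psi_t$-invariant subbundles $\tilde E^s, \tilde E^u \subset T\Si|_\La$, each of the form $\{\,v + f(v)\,X : v \in E^{s/u}\,\}$, by choosing the $X$-components $f$ so as to absorb the nontrivial off-diagonal part of the cocycle.

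Since $\Si$ is a regular energy level, $X$ does not vanish on $\La$ and the decomposition $T_\th\Si = \N(\th) \oplus \langle X(\th)\rangle$ is continuous on $\La$. As $d\psi_t\, X(\th) = X(\psi_t\th)$, in this decomposition $d\psi_t$ has the upper-triangular block form
$$
d\psi_t(\th)\big|_{T\Si} = \begin{pmatrix} \Psi_t & 0 \\ \beta_t & 1 \end{pmatrix},
$$
where $\beta_t(\th,\cdot) : \N(\th) \to \re$ is defined by $d\psi_t(v) = \Psi_t(v) + \beta_t(\th,v)\,X(\psi_t\th)$ for $v \in \N(\th)$. A direct computation from $d\psi_{t+s} = d\psi_s \circ d\psi_t$ yields the cocycle identity
$$
\beta_{t+s}(\th, v) = \beta_t(\th, v) + \beta_s(\psi_t\th, \Psi_t(v)).
$$

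I would seek $\tilde E^s(\th)$ as the graph of a bounded linear map $f^s_\th : E^s(\th) \to \re$. Invariance under $d\psi_t$ is equivalent to the cohomological equation $f^s_{\psi_t\th}(\Psi_t v) - f^s_\th(v) = \beta_t(\th, v)$. Since $\Psi_t v \to 0$ exponentially on $E^s$, continuity forces
$$
f^s_\th(v) := -\lim_{t\to+\infty} \beta_t(\th, v),
$$
and the cocycle identity reduces existence of this limit to a telescoping Cauchy estimate: compactness of $\La$ gives a uniform bound $|\beta_1(\th, w)| \le C\,\|w\|$, hence $|\beta_{n+1}(\th,v) - \beta_n(\th,v)| = |\beta_1(\psi_n\th, \Psi_n v)| \le C\,\|\Psi_n v\| \le C'\,e^{-\la n}\,\|v\|$, with the real-time case following by uniform continuity of $t \mapsto \beta_t$ on compacts. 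The resulting $f^s_\th$ is linear in $v$, satisfies $|f^s_\th(v)| \le K\,\|v\|$, and depends continuously on $\th$.

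For $w = v + f^s_\th(v)\,X(\th) \in \tilde E^s(\th)$ the Sasaki norm $\|w\|$ is comparable to $\|v\|$, and the invariance identity gives $d\psi_t(w) = \Psi_t v + f^s_{\psi_t\th}(\Psi_t v)\,X(\psi_t\th)$, so $\|d\psi_t(w)\| \le C''\,\|\Psi_t v\| \le C'''\,e^{-\la t}\,\|w\|$, which is the required contraction. The symmetric argument with $t \to -\infty$ on $E^u$ produces $\tilde E^u$ with exponential expansion. Since $E^s \oplus E^u = \N$ and each graph projects isomorphically onto its base, a dimension count yields $T\Si|_\La = \tilde E^s \oplus \langle X\rangle \oplus \tilde E^u$, the hyperbolic splitting for $d\psi_t|_\La$. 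The main obstacle, and essentially the only real work, is establishing the uniform convergence of the series defining $f^s_\th$; this is precisely where compactness of $\La$ enters, via the $C^0$ bound on $\beta_1$ that combines with the hyperbolic decay of $\Psi_t$ to make the telescoping sum absolutely summable.
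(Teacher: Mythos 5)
Your proof is correct and is essentially the paper's graph transform argument: the paper produces the invariant graph over $E^u$ as the fixed point of a contraction $T_t$ on the Banach space of continuous families of linear maps $L_\th:E^u(\th)\to\re$, whereas you write that fixed point down explicitly as a telescoping sum solving the cohomological equation $f_{\psi_t\th}\circ\Psi_t-f_\th=\beta_t$. Both rest on the same two facts — compactness of $\La$ to bound $\beta_1$ uniformly, and the exponential decay of $\Psi_t|_{E^s}$ (resp.\ $\Psi_{-t}|_{E^u}$) to make the limit converge and to transfer the contraction (resp.\ expansion) to $d\psi_t$ on the graphs — so the two presentations differ only in whether the fixed point is exhibited abstractly or computed in closed form.
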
  
   
   \begin{proof}
   Let $\N=E^s\oplus E^u$ be the hyperbolic splitting for $\Psi_t$.
   We look for the hyperbolic splitting
   $T_\th\Si=\cE^s\oplus\X\oplus\cE^u$ for $d\psi_t$ on $\La$.
   We only construct $\cE^u$, the construction of $\cE^s$ is similar.
   
   Let 
   $$
   \cF=\{\,L:E^u\to\re\text{ continuous }|\;\forall\th\in\La\quad L_\th:E^u(\th)\to\re
   \text{ is linear }\}
   $$
   with $\lV L\rV=\sup_{\th\in\La}\lV L_\th\rV$.
   For $L\in\cF$ we associate a subbundle $W_L\subset T\Si$ by
   $$
   W_L(\th)=\graph(L)=\{\,v+L_\th(v)\,X(\th)\;|\;v\in E^u(\th),
   \;\th\in\La\;\}.
   $$
   Define the graph transform $T_t:\cF\to\cF$, $t\in\re$ by
   $W_{T_t L}=d\psi_t(W_L)$, i.e.
   $$
   d\psi_t(v+L(v)\,X)=\Psi_t(v)+(T_tL)(\Psi_t \,v)\cdot X,\qquad
   \text{ if }v\in E^u\subset\N.
   $$
   We show that $T_t$ is a contraction for $t>0$ sufficiently large.
   Indeed,
   \begin{gather*}
   T_t(L_1-L_2)\circ\Psi_t|_\N=d\psi_t[(L_1-L_2)\cdot X]=(L_1-L_2)\cdot
   (X\circ\psi_t),\\
   T_t(L_1-L_2)=\big[(L_1-L_2)\circ \Psi_t^{-1}|_{E^u}\big]\;X,
   \\
   \lV T_t\rV\le \lV\Psi_t^{-1}|_{E^u}\rV\;\lV X\rV<1
   \qquad\text{ for $t>0$ large.}
   \end{gather*}
       Thus $T_t$  has a fixed point
   $L^*$ and  $\cE^u:=\graph(L^*)$ is $d\psi_t$-invariant.

   If $\xi\in\cE^u$ we have that $\P(\xi)\subset E^u$ and
   $$
   d\psi_{-t}(\xi)=\Psi_{-t}\circ\P(\xi) + L^*(\Psi_{-t}\circ\P(\xi)),
   \qquad \text{ if }\xi\in\cE^u.
   $$
   Therefore
   $$
   \lV d\psi_{-t}\vert_{\cE^u}\rV\le\lV\Psi_{-t}|_{E^u}\rV\,\lV\P\rV\,
   \big[1+\lV L^*\rV\big].
   $$
   Since $\lV\Psi_{-t}|_{E^u}\rV\to 0$ exponentially when
   $t\to+\infty$ and $\lV\P|_\La\rV$, $\lV L^*\rV$ are
   bounded\footnote{The norm $\lV\P|_\La\rV$ is bounded because
   $\varangle(X,\N)>0$ is bounded below on the compact set $\La$.}
   we get that $\lV\Psi_{-t}|_{E^u}\rV\to 0$ exponentially when
   $t\to+\infty$.
 
   \end{proof}
 
 \section{Hyperbolicity.}

  Let $\pi:\bE\to\bB$ be a continuous vector bundle over a compact
  metric space $\bB$. Let $\Psi:\re\to\isom(\bE)$ be a continuous
  $\re$-action of bundle maps, i.e.
  \begin{enumerate}
  \item There is a continuous flow $\psi_t$ on $\bB$ such that 
  $\pi\circ\Psi_t=\psi_t\circ\pi$ for all $t\in\re$.
  \item $\Psi_t:\bE(b)\to\bE(\psi_t(b))$ is a linear isomorphism
   for all $(b,t)\in\bB\times\re$.
   \item $\Psi_s\circ\Psi_t=\Psi_{s+t}$ for all $s,t\in\re$.
  \end{enumerate}
  We say that $\Psi$ is {\it hyperbolic} if there is a
  $\Psi_t$-invariant splitting
  $\E=E^s\oplus E^u$ and $T>0$ such that
  $$
  \lV \Psi_T|_{E^s}\rV<\tfrac 12,\qquad
  \lV \Psi_{-T}|_{E^u}\rV<\tfrac 12.
  $$
  This condition implies that the subbundles $E^s$ and $E^u$ are
  continuous.
  
  We say that $\Psi$ is {\it quasi-hyperbolic} if for
  all $\xi\in\bE$ with $\xi\ne 0$ we have that
  $$
  \sup_{t\in\re}|\Psi_t(\xi)|=+\infty.
  $$
  A quasi-hyperbolic action is hyperbolic provided that the projected
  flow $\psi_t|_\bB$ is chain-recurrent or if in \eqref{Es}
  $\dim E^s$ is constant on all the minimal sets
  (see Sacker \& Sell~\cite{SaSe},
  Churchill \& Selgrade~\cite{Churchill-Selgrade}, Ma\~n\'e~\cite{Ma10}).
  See also Appendix~\ref{apa}.
  Examples of quasi-hyperbolic dynamical systems which are not
  hyperbolic appear in Robinson~\cite{Ro4} and Franks \&
  Robinson~\cite{FraRo}.
  Here we will drop any recurrency hypothesis in the case of
  hamiltonians without conjugate points.
  This was also done in   Johnson, Novo, Obaya \cite{JNO}
  and in Arnaud~\cite{Arnaud4}.
    
  For $b\in B$ define
  \begin{align}
   E^s(b)&:=\big\{\,\xi\in\bE(b)\;\big|\;
   \sup_{t>0}|\Psi_t(\xi)|<\infty\,\big\},
   \label{Es}\\
   E^u(b)&:=\big\{\,\xi\in\bE(b)\;\big|\;
   \sup_{t>0}|\Psi_{-t}(\xi)|<\infty\,\big\}.
  \notag
  \end{align}
  Observe that if $\Psi$ is quasi-hyperbolic then
  $$
  E^s(b)\cap E^u(b)=\0\qquad \text{ for all }\quad b\in\bB.
  $$
  
  \begin{Lemma}\label{qh}
  If $\Psi$ is a quasi-hyperbolic action then there is $\tau>0$ such
  that for all $b\in\bB$,
  $$
  \lV \Psi_\tau|_{E^s(b)}\rV<\tfrac 12,\qquad
  \lV \Psi_{-\tau}|_{E^u(b)}\rV<\tfrac 12.
  $$
 
  \begin{Corollary}\label{qhcor}
  A quasi-hyperbolic action $\Psi$ is hyperbolic if and only if
  $\bE=E^s+E^u$.
  \end{Corollary}
  
  \end{Lemma}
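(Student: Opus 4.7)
My plan is to bootstrap quasi-hyperbolicity from a pointwise statement to a uniform one via compactness of $\bB$, and then run a doubling iteration on forward $E^s$-orbits to extract uniform geometric contraction at a single time $\tau$. The first ingredient is a uniform version of quasi-hyperbolicity. The unit sphere bundle $\bbS:=\{(b,\xi)\in\bE:|\xi|=1\}$ is compact, so the nested closed sets $A_n:=\{(b,\xi)\in\bbS:|\Psi_t\xi|\le 2\text{ for all }|t|\le n\}$ must have some empty $A_N$, since $\bigcap_n A_n=\emptyset$ by quasi-hyperbolicity. By homogeneity, $\sup_{|t|\le N}|\Psi_t\xi|>2|\xi|$ for every $\xi\ne 0$.

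For $\xi\in E^s(b)\setminus\0$ set $M(\xi):=\sup_{t\ge 0}|\Psi_t\xi|<\infty$. Applying the uniform estimate to $\Psi_s(\xi)$ for any $s\ge N$ supplies $t\in[-N,N]$ with $s+t\ge 0$ and $|\Psi_{s+t}\xi|>2|\Psi_s\xi|$; this forces $|\Psi_s\xi|\le M(\xi)/2$, since otherwise $|\Psi_{s+t}\xi|>M(\xi)$, contradicting the definition of $M(\xi)$. Using $M(\Psi_{kN}\xi)=\sup_{s\ge kN}|\Psi_s\xi|$, iteration yields $|\Psi_{kN}\xi|\le 2^{-k}M(\xi)$ for every $k\ge 1$.

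The hard part is showing the ratio $M^*:=\sup\{M(\xi)/|\xi|:\xi\in E^s(b)\setminus\0,\ b\in\bB\}$ is finite. I would proceed by contradiction: take unit $\xi_n\in E^s(b_n)$ with $M(\xi_n)\to\infty$, choose $s_n\ge 0$ with $|\Psi_{s_n}\xi_n|\ge M(\xi_n)/2$, and set $\eta_n:=\Psi_{s_n}\xi_n/|\Psi_{s_n}\xi_n|\in\bbS$. Then $|\Psi_t\eta_n|\le 2$ whenever $t\ge -s_n$, while $|\Psi_{-s_n}\eta_n|=|\Psi_{s_n}\xi_n|^{-1}\to 0$. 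After passing to a subsequence $\eta_n\to\eta^*\in\bbS$: if $s_n\to\infty$, then $|\Psi_t\eta^*|\le 2$ for every $t\in\re$, contradicting quasi-hyperbolicity; if $s_n$ remains bounded with $s_n\to s^*$, joint continuity of $\Psi$ forces $\Psi_{-s^*}\eta^*=0$, contradicting the fibrewise invertibility of $\Psi_{-s^*}$.

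Finally, pick $k$ with $2^k>2M^*$ and set $\tau:=kN$; combining the previous estimates gives $\|\Psi_\tau|_{E^s(b)}\|\le 2^{-k}M^*<1/2$ uniformly in $b\in\bB$. The estimate for $E^u$ follows by running the whole argument symmetrically for $\Psi_{-t}$ in place of $\Psi_t$, possibly enlarging $\tau$. The genuine obstacle is the ratio bound $M^*<\infty$: without it, the geometric decay in the doubling step carries a factor $M(\xi)$ that could blow up over the unit sphere, and the compactness/rescaling trick is precisely what converts pointwise quasi-hyperbolicity into the uniform ratio needed to make the doubling meaningful.
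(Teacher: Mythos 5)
Your proof is correct, and its key technical ingredient---the finiteness of the ratio $M^*=\sup\{M(\xi)/|\xi|: \xi\in E^s\setminus 0\}$---is exactly the Claim in the paper's proof, established there by the same fibrewise rescaling-and-compactness argument you use (normalize at the near-maximal time $s_n$, pass to a subsequence, and derive a nonzero bounded orbit). The outer structure, however, is genuinely different: the paper argues entirely by contradiction (assume no $\tau$, extract $v_n\in E^s$ with $|v_n|=1$ and $|\Psi_n v_n|\ge 1/2$, then use the Claim to bound $w_n:=\Psi_n v_n$ and pass to a limit giving a bounded nonzero orbit), whereas you first apply the finite-intersection property of the nested closed sets $A_n$ in the compact unit sphere bundle to produce an explicit time $N$ past which every nonzero vector grows by a factor of $2$ somewhere in the window $[-N,N]$, then translate this into geometric decay of $E^s$-orbits, $|\Psi_{kN}\xi|<2^{-k}M(\xi)$, and finally plug in $M^*<\infty$ to choose $\tau=kN$ with $2^k>2M^*$. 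Your route is thus more constructive---it exhibits $\tau$ rather than merely proving it exists---at the cost of one additional compactness step; the paper's route goes straight to a second contradiction. Both hinge on the same ratio bound, so the hard part is shared. One small loose end: Corollary~\ref{qhcor} is part of the quoted statement but is not addressed in your proposal; it does follow immediately (quasi-hyperbolicity gives $E^s\cap E^u=\0$, so $\bE=E^s+E^u$ becomes a direct sum and the Lemma supplies the uniform estimates), and the paper likewise leaves this implicit.
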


   \noindent{\bf Proof of Lemma~\ref{qh}:}
 
 Suppose that the lemma is false for $E^s$. 
 The proof for $E^u$ is similar.
 Then there exist a sequence 
 $v_n\in E^s$, $\lv v_n\rv=1$ such that
 $\lv \Psi_n(v_n)\rv \ge \frac 12$ for all $n\in\na$.
 We claim that there exists $C$ such that
 $$
 \sup_{b\in \bB}\; \sup_{t\ge 0} 
 \lV \Psi_t  \vert_{E^s(b)}\rV < C < +\infty\, .
 $$

 Suppose the claim is true. Let $w_n:= \Psi_n(v_n)$ and
 $y_n:=\pi(v_n)$. We have that $\frac 12 \le \lv w_n\rv \le C$
 for all $n$ and
 $$
 \lv \Psi_t(w_n)\rv = \lv \Psi_{t+n}(v_n)\rv
 \le C \qquad \text{for all } t\ge -n\, .
 $$
 Since $\bB$ is compact there exists a convergent subsequence
 of $(y_n,w_n)$.
 If $(y,w)=\lim_n(y_n,w_n)$,
 we have that $y\in \bB$, $w\in \bE(y)$, $\lv w \rv \ge \frac 12$ and
 $$
 \lv \Psi_t(w)\rv \le C
 \qquad \text{for all } t\in {\mathbb \re}\, .
 $$
 This contradicts the quasi-hyperbolicity of $\Psi$.
 
 Now we prove the claim. Suppose it is false.
 Then there exist $t_n\ge 0$, $v_n\in E^s$,
 $\lv v_n\rv = 1$, such that
 \begin{equation}\label{E:QA.1}
 \sup_n \lv \Psi_{t_n} (v_n) \rv = +\infty\, .
 \end{equation}
 Let $s_n>0$ be such that
 $$
 \lv \Psi_{s_n}(v_n) \rv
 > \tfrac 12 \, \sup_{s\ge 0}\lv \Psi_s(v_n) \rv
 \ge \tfrac 12 \, \lv \Psi_{t_n}(v_n) \rv\, .
 $$
 By \eqref{E:QA.1} we have that $s_n\underset{n}{\to} +\infty$. Let
 $y_n:=\pi(\Psi_{s_n}v_n)$ and
 $$
 w_n := \frac{\Psi_{s_n}(v_n)}
             {\lv \Psi_{s_n}(v_n)\rv}
 \, .
 $$
 Then $\lv w_n\rv =1$ and if $t>-s_n$ we have that
 $$
 \lv \Psi_t(w_n)\rv
 = \frac{\lv \Psi_{t+s_n}(v_n) \rv}
        {\lv \Psi_{s_n}(v_n) \rv}
 \le \frac{2\, \lv \Psi_{t+s_n}(v_n)\rv}
          {\sup_{s\ge 0}\lv \Psi_s(v_n)\rv}
 \le 2\, .
 $$
 Since $\lv w_n\rv=1$ and $y_n\in \bB$, there exists a convergent
 subsequence $(y_n,w_n)\to (y,w)$. We would have that $y\in \bB$,
 $w\in \bE(y)$, $\lv w\rv=1$ and
 $$
 \lv \Psi_t(w)\rv \le 2
 \qquad \text{for all } t\in\re \, .
 $$
 This contradicts the quasi-hyperbolicity of $\Psi$.
 
 \qed

 \begin{Proposition}\label{Ybd}\quad
 
  Let $\La\subset T^*M\times\re$ and $H\in\cH(T^*M\times\re, b_1,b_2)$
  be a bounded hamiltonian. Then there is $b>0$ such that if $T>1$ and
  the segment $\psi_{[-1,T+2]}(\th)\subset\La$ is disconjugate, then
  $$
  |Y_\th(t)\;v|>b\;|v|\qquad\text{ for all }
  1<|t|\le T,\quad v\in\V(\th),
  $$
  where $Y_\th(t)\;v:=d\pi\big(d_\th\psi_t(0,v)\big)$.
 \end{Proposition}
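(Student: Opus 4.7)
My approach is to mirror the proof of Proposition~\ref{growth}(\ref{g1}), replacing its Green-bundle limit at $+\infty$ with a finite-time analog anchored at the right endpoint $t=T+2$ of the disconjugate segment. Introduce two matrix Jacobi solutions along $\psi_t(\th)$: the pair $(Y,U)$ with $Y(0)=0$, $U(0)=I$ (so $Y(t)=Y_\th(t)$), and the pair $(Z,W)$ with $Z(T+2)=0$, $W(T+2)=I$, which represents $d\psi_{t-T-2}(\V(\psi_{T+2}(\th)))$. Disconjugacy of $\psi_{[-1,T+2]}(\th)$ forces $\det Y(t)\ne 0$ on $[-1,T+2]\setminus\{0\}$ and $\det Z(t)\ne 0$ on $[-1,T+2)$, so the symmetric Riccati solutions $S=UY^{-1}$ and $\bbS=WZ^{-1}$ are defined on these intervals. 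Applying Proposition~\ref{BdRic} to $S$ on $\psi_{[0,T+2]}(\th)$ and to $\bbS$ on $\psi_{[-1,T+2]}(\th)$ (both segments of length $>2$) yields $\lV S(t)\rV,\lV\bbS(t)\rV\le A$ for $t\in(1,T+1)\supset(1,T]$, with $A$ depending only on $b_1,b_2$.

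Next, the Wronskian $K=Y^*W-U^*Z$ is constant along the orbit, and its evaluation at $t=T+2$ gives $K=Y(T+2)^*$. Applying Lemma~\ref{Jform} with base point $t_0=T+2$ reproduces the computation from Proposition~\ref{growth}(\ref{g1}) in finite form and yields the key identity
$$
(Y(t)^*)^{-1}\,\tilde M(t)^{-1}\,Y(t)^{-1}\;=\;S(t)-\bbS(t),\qquad \tilde M(t):=\int_t^{T+2}Y(s)^{-1}H_{pp}(s)(Y(s)^*)^{-1}\,ds.
$$
Both sides are symmetric positive definite on $(1,T+1)$; since the right-hand side has norm $\le 2A$, inversion in the positive-definite order gives $Y(t)\,\tilde M(t)\,Y(t)^*\ge (2A)^{-1}\,I$, whence
$$
\lV Y_\th(t)\,v\rV\;\ge\;\lV v\rV\big/\sqrt{2A\;\lV\tilde M(t)\rV}.
$$

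The principal obstacle is the uniform upper bound on $\lV\tilde M(t)\rV$. I close this by running the parallel construction from the left endpoint: let $(H,V)$ be the Jacobi matrix solution with $H(-1)=0$, $V(-1)=I$, whose Riccati $S_H$ is bounded by $A$ via Proposition~\ref{BdRic}. Lemma~\ref{Jform} and the Wronskian of $(H,V)$ with $(Y,U)$ yield the companion identity $S(t)-S_H(t)=(H(t)^*)^{-1}H(0)^*Y(t)^{-1}$. Gronwall's inequality on the unit-length buffer $[-1,0]$, with Jacobi coefficients bounded in terms of $b_1,b_2$, gives two-sided bounds on $H(0)$ in the positive-definite order---the lower bound originating from $\int_{-1}^0 H_{pp}\ge b_2\,I$ after absorbing the Gronwall correction. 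Combined with $\lV S-S_H\rV\le 2A$, this forces $\lV\tilde M(t)\rV$ to be bounded by a constant depending only on $b_1,b_2$, completing the positive-$t$ case with a uniform $b=b(b_1,b_2)>0$. The case $1<-t\le T$ is obtained by exchanging the roles of the two endpoints of the disconjugate segment in the above construction.
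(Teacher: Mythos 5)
Your approach diverges from the paper's and contains a genuine gap at the decisive step. The paper's proof does not go through Riccati/Jacobi matrix identities at all; it constructs a broken Jacobi field $J$ (equal to $0$ on $[-1,0]$, equal to $Y_\th(t)v_0$ on $[0,T]$, and a Jacobi field vanishing at $T+2$ on $[T,T+2]$), shows via Proposition~\ref{Iform} and Proposition~\ref{BdRic} that $|I(J,J)|\le 2A\,|J(T)|^2$, pairs it with a compactly supported bump field $Z$ for which $|I(Z,Z)|\le B(b_1,b_2)$ and $I(Z,J)=-1$, and then uses that disconjugacy forces $I(J+\la Z,J+\la Z)>0$ for all $\la\in\re$, so the discriminant of this quadratic in $\la$ is negative: $1\le 2AB\,|J(T)|^2$. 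The lower bound on $Y_\th(T)v_0=J(T)$ comes from this \emph{global} positivity of the index form, which is exactly the information your local Riccati identities do not capture.

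Concretely, the step that fails in your argument is the claimed uniform upper bound on $\lV\tilde M(t)\rV$. The identity $(Y^*)^{-1}\tilde M^{-1}Y^{-1}=S-\bbS$ together with $\lV S-\bbS\rV\le 2A$ only yields $\tilde M^{-1}\le 2A\,Y^*Y$, i.e.\ a \emph{lower} bound on $\tilde M$; an upper bound on $\tilde M(t)=\int_t^{T+2}Y^{-1}H_{pp}(Y^*)^{-1}ds$ is literally equivalent to a lower bound on $Y$ over that interval, which is what you are trying to prove, so this route is circular. Your companion identity $S-S_H=(H^*)^{-1}H(0)^*Y^{-1}$ is correct, but it only gives $\lV Y(t)^{-1}\rV\le 2A\,\lV H(t)\rV\,\lV H(0)^{-1}\rV$, and the Gronwall estimate you invoke bounds the \emph{growth rate} of $H$, not its size: $\lV H(t)\rV$ can still be of order $e^{ct}$, so the resulting bound on $\lV Y(t)^{-1}\rV$ deteriorates as $t\to T$ and is not $T$-independent. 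The bounded-Riccati input (Proposition~\ref{BdRic}) controls $VH^{-1}$, $UY^{-1}$, etc., but says nothing about the absolute size of the matrices $H$, $Y$ themselves; the paper's index-form argument is precisely the device that converts the Riccati bound $A$ plus the bump-field bound $B$ into an absolute lower bound on $Y(T)$.
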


 \begin{proof}
 
 It is enough to prove it for $t=T$. For $\vrt\in T^*M$ and $v\in
 T_{\pi(\vrt)} M$, write 
 \linebreak
 $Y_\vrt(t)\;v:=d\pi\big(d_\vrt\psi_t(0,v)\big)$,
 where $(0,v)\in\V(\vrt)$.
 
 Let $v_0\in T_{\pi(\th)} M$ with $|v_0|=1$. Let $J:[-1,T+2]\to TM$ be the
 vector field along $t\mapsto\pi\circ\psi_t(\th)$ defined as $J(t)=0$
 if $t\in[-1,0]$, $J(t)=Y_\th(t)\,v_0$ if $t\in[0,T]$ and
 $J(t)=Y_{\psi_{T+2}(\th)}\big(t-(T+2)\big)\,w_0$ if $t\in[T,T+2]$,
 where $w_0\in T_{\pi\psi_{T+2}(\th)}M$ is such that $J(t)$ is
 continuous. 
 
 Using proposition~\ref{Iform} and proposition~\ref{BdRic},
  $$
 |I(J,J)|=| J(T)^* (S_2-S_1)\,J(T) |
 \le 2\;A\;|J(T)|^2,
 $$
 where $S_1(t),\,S_2(t):\H(\psi_t(\th))\to\V(\psi_t(\th))$ are
 solutions of the Riccati equation corresponding to the subspaces
 $t\mapsto d_\th\psi_t(\V(\th))$ and $t\mapsto
 d\psi_{t-(T+2)}(\V(\psi_{T+2}(\th)))$.
 
 Fix a smooth function $f:[-1,1]\to[0,1]$ with support in $[-\frac
 12,\frac 12]$ such that $f(0)=1$. Let $P_t:T_{\pi(\th)}M\to
 T_{\pi(\psi_t(\th))}M$ be the parallel transport along
 $t\mapsto\pi\circ\psi_t(\th)$. Let $Z(t)=f(t)\,P_t(v_0)$ for
 $t\in[-1,1]$ and $Z(t)=0$ if $|t|>1$. Using formula~\eqref{index2} we
 have that there is $B=B(b_1,b_2,f)>0$ such that
 $$
 |I(Z,Z)|\,\le\, 4\;\lV L\rV_{C^2}\,\Vert(Z,\dot Z)\Vert^2_{\cL^2}\,\le\, B,
 $$
 where $L$ is the lagrangian of $H$.
 
 Using proposition~\ref{Iform},
 $$
 I(J,Z)=I(Z,J)= Z(0^-)\cdot 0-Z(0^+)\cdot v_0=-|v_0|^2=-1.
 $$
 
 Since the segment $\psi_{[-1,T+2]}(\th)$ is disconjugate, the
 index
 $$
 I(J+\la Z,\,J+\la Z)=I(J,J)+2 \la\,I(J,Z)+\la^2\, I(Z,Z)
 $$
 is positive for all $\la\in\re$. This polynomial in $\la$ has no real
 roots and hence
 $$
 4- 8\,A\,|J(T)|^2\,B\;\le\text{ discriminant }<\;0.
 $$
 $$
 |Y_\th(T)\,v_0|=|J(T)|\ge(2 A B)^{-\frac 12}\;|v_0|.
 $$
 \end{proof}

 Now let $\Si$ be a compact regular energy level of an autonomous
 hamiltonian. Recall that
 $$
 \N(\th)=\{\,\xi\in T_\th\Si\;|\;\langle d\pi(\xi),d\pi (X(\th))\rangle
 =0\,\},
 $$
 that $\P(\th):T_\th\Si\to\N(\th)$ is the projection along the flow
 direction and that if the orbit $\psi_\re(\th)$ is disconjugate
 then $\E^\top=\E\cap\N$ and  $\F^\top=\F\cap\N$ along $\psi_\re(\th)$.
 Let $\Psi:\re\to\isom(\N)$ be the action $\Psi_t=\P\circ d\psi_t$.
 Then orbitwise $\Psi_t$ coincides with the derivative $d\phi_t$ of the
 reparametrization $\phi_t$ defined in section~\ref{Saut}.

 \bigskip

 \noindent{\bf Proof of Theorem~\ref{TC}:}
 
 Since $\E^\top$ and $\F^\top$ are lagrangian subspaces of $\N$,
 $\dim \E^\top=\dim\F^\top=\frac 12\dim \N$. Proposition~\ref{Xhyp} and
 Corollary~\ref{qhcor}
 together with the following Proposition~\ref{hyp} imply
 Theorem~\ref{TC}.
 
 \qed

 \begin{Proposition}\label{hyp}
 Let $H:T^*M\to\re$ be a convex hamiltonian and let 
 $\La\subset T^*M$ be a compact invariant subset whose orbits
 are disconjugate.
 For $\th\in\La$ write
 \begin{align*}
 E^s(\th):&=\big\{\,\xi\in \N(\th)\;|\;\sup_{t>0}|\Psi_t(\xi)|<\infty\,\}
 \\
 E^u(\th):&=\big\{\,\xi\in \N(\th)\;|\;\sup_{t>0}|\Psi_{-t}(\xi)|<\infty\,\}.
 \end{align*}
 If $\E(\th)\cap\F(\th)=\langle X(\th)\rangle$ for all $\th\in\La$
 then
 \begin{enumerate}
 \item\label{h1} The action $\Psi$ over $\La$ is quasi-hyperbolic.
 \item\label{h2}
  $\E^\top(\th)\subset E^s(\th)$ and $\F^\top(\th)\subset E^u(\th)$
 for all $\th\in\La$.
 \end{enumerate}
 \end{Proposition}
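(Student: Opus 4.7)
The plan is to work throughout with the reparametrized flow $\phi_t$ on $\N$ from Section~\ref{Saut}: this is the hamiltonian flow of a bounded hamiltonian $K$ and $\Psi_t = d\phi_t|_\N$, so Propositions~\ref{BdRic} and~\ref{growth} apply directly to it, and the Green bundles for $K$ are exactly $\E^\top = \E\cap\N$ and $\F^\top = \F\cap\N$.

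For part (1), suppose $\xi\in\N(\th)$ has $\sup_{t\in\re} |\Psi_t \xi|<\infty$. Proposition~\ref{growth}.\eqref{g2} applied to $K$ forces $\xi\in\E^\top(\th)\cap\F^\top(\th)$. The hypothesis gives $\E(\th)\cap\F(\th)=\langle X(\th)\rangle$, and since $X(\th)\notin\N(\th)$ (the flow direction is transverse to $\N$), intersecting with $\N$ kills everything. Hence $\xi=0$ and $\Psi$ is quasi-hyperbolic.

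For part (2), fix $\xi\in\E^\top(\th)$ and let $h(t)$ be its horizontal component in the $K$-splitting, so $\Psi_t\xi=\bigl(h(t),\, S^\E(\phi_t\th)\, h(t)\bigr)$ where $S^\E(\vrt)$ and $S^\F(\vrt)$ are the Riccati slopes of $\E^\top(\vrt), \F^\top(\vrt)$. By Proposition~\ref{BdRic} applied to $K$, $\lV S^\E\rV, \lV S^\F\rV$ are uniformly bounded on $\La$, and the hypothesis gives $S^\F(\vrt)-S^\E(\vrt)\succ 0$ pointwise. Introduce the Lyapunov functional
\[
L(t):=h(t)^*\bigl(S^\E(\phi_t\th)-S^\F(\phi_t\th)\bigr)\,h(t)\le 0.
\]
Using $\dot h = (K_{px}+K_{pp}S^\E)\,h$ and the Riccati equations for $S^\E,S^\F$, a direct computation (in which all cross terms involving $K_{xx}$, $K_{xp}$, $K_{px}$ and first-order $K_{pp}$-terms cancel) gives
\[
\dot L(t)=h(t)^*(S^\E-S^\F)\, K_{pp}\,(S^\E-S^\F)\, h(t)\ge 0,
\]
so $L$ is non-decreasing, bounded above by $0$, and $|L(t)|\le|L(0)|$ for all $t\ge 0$. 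Given the uniform transversality bound $c:=\inf_{\vrt\in\La}\lambda_{\min}(S^\F-S^\E)(\vrt)>0$, this immediately yields $|h(t)|^2\le|L(0)|/c$, and combined with the bound on $S^\E$ one gets $|\Psi_t\xi|$ uniformly bounded for $t\ge 0$, so $\xi\in E^s(\th)$. The inclusion $\F^\top\subset E^u$ is symmetric under time reversal.

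The main obstacle is this uniform transversality bound. The functions $\vrt\mapsto h^*S^\E(\vrt) h$ and $\vrt\mapsto h^*S^\F(\vrt) h$ are respectively lower and upper semicontinuous (as pointwise monotone limits from below and above of the continuous slopes $S^\E_T, S^\F_T$), so $h^*(S^\F-S^\E) h$ is only upper semicontinuous in $\vrt$, and its pointwise positivity does not automatically upgrade to a uniform positive lower bound over the compact set $\La$. Extracting such a bound---without any recurrency assumption on $\La$, which is precisely the strengthening achieved here over~\cite{CI}---is the crux of the proof and must combine the flow invariance of $\E^\top$ and $\F^\top$ with the monotonicity of $L$ along orbits inside $\La$.
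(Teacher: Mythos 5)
Your part~\eqref{h1} matches the paper verbatim: $\B^\top\subset\E^\top\cap\F^\top=\E\cap\F\cap\N=\langle X\rangle\cap\N=\0$.

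Your part~\eqref{h2} takes a genuinely different route and, as you yourself flag, leaves a real gap. The Riccati--Lyapunov computation is correct: with $W=S^\E-S^\F$ one gets $\dot L=h^*W\,K_{pp}\,W\,h\ge 0$, so $L$ is nondecreasing and nonpositive and $|L(t)|\le|L(0)|$. But to turn this into a bound on $|h(t)|$ you need $\inf_{\vrt\in\La}\lambda_{\min}(S^\F-S^\E)(\vrt)>0$, and this is exactly what the hypothesis does not give you: $S^\E$ is an increasing limit and $S^\F$ a decreasing limit of continuous finite-time slopes, so $h^*(S^\F-S^\E)h$ is only upper semicontinuous on $\La$, hence its pointwise positivity need not bound it away from zero on a compact set. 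Integrating $\dot L$ only yields $Wh\in\cL^2[0,\infty)$, which still does not control $\sup_t|h(t)|$. This is not a cosmetic difficulty: without recurrence of $\La$, there is no obvious way to propagate a pointwise transversality into a uniform one, and you do not supply the missing mechanism.

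The paper avoids this entirely by arguing by contradiction and compactness, bootstrapping from part~\eqref{h1}. Given $\xi\in\E^\top(\th)\setminus E^s(\th)$, it approximates $\xi$ by $\xi_n\in E_n:=\Psi_{-n}(\V^\top(\psi_n(\th)))$ with $d\pi(\xi_n)=d\pi(\xi)$, so $\xi_n\to\xi$. If $s_n\in[0,n]$ maximizes $|\Psi_t(\xi_n)|$, then $s_n\to\infty$; moreover Proposition~\ref{Ybd} (the uniform lower bound $|Y_{\vrt}(t)v|\ge b|v|$ on disconjugate segments for a bounded hamiltonian) forces $|\Psi_n(\xi_n)|$ to stay bounded, hence $n-s_n\to\infty$ too. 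Normalizing $\zeta_n:=\Psi_{s_n}(\xi_n)/|\Psi_{s_n}(\xi_n)|$ and passing to a limit in the compact set $\La$ produces a unit vector $\zeta_0$ with $|\Psi_t(\zeta_0)|\le 1$ for all $t\in\re$, i.e.\ $\zeta_0\in\B^\top\setminus\0$, contradicting~\eqref{h1}. What this buys is exactly what your approach is missing: it substitutes the quantitative, uniform-over-$\La$ estimate of Proposition~\ref{Ybd} and a compactness extraction for any continuity or uniform separation of the Green bundles, so no recurrence and no uniform transversality bound are needed. Your Lyapunov monotonicity is a nice complementary observation, but by itself it does not prove the inclusion $\E^\top\subset E^s$ under the stated hypotheses.
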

 
 \begin{proof}\quad
 
  \eqref{h1}. 
  Let
  $
  \B^\top:=\{\, \xi\in\N\;|\; \sup_{t\in\re}|\,\Psi_t(\xi)|<+\infty\,\}$.
  By Proposition~\ref{growth}.\eqref{g2} applied to the action $d\phi_t\vert_\N=\Psi_t$
  we have that  $\B^\top\subset
  \E^\top\cap\F^\top=\langle X\rangle\cap\N=\0$.
 
  \eqref{h2}. 
  We only prove that $\E^\top\subset E^s$, the other inclusion is
  similar. Suppose it is false, then there is $\th\in\La$ and
  $\xi\in \E^\top(\th)\setminus E^s(\th)$. Since $\xi\notin E^s(\th)$, 
  $\sup_{t>0}|\Psi_t(\xi)|=\infty$.
  
  Write $\V^\top:=\V\cap\N$, $\H^\top:=\H\cap\N$. 
  Observe that $\N=\H^\top\oplus\V^\top$.
  Let $\xi_n\in E_n:=\Psi_{-n}(\V^\top(\psi_n(\th)))$ be such that
  $d\pi(\xi_n)=d\pi(\xi)$. Since $\lim_n E_n=\E^\top(\th)$ and 
  $\E^\top\pitchfork\V^\top$, we have that $\lim_n\xi_n=\xi$.
  Then 
  $$
  \lim_n\sup_{t\in[0,n]}|\Psi_t(\xi_n)|=\infty.
  $$

  Let $s_n\in[0,n]$ be such that $|\Psi_{s_n}(\xi_n)|=\sup_{t\in[0,n]}|\Psi_t(\xi_n)|$.
  Since $|\xi_n|$ is bounded on $n$, $\lim_n s_n=+\infty$.
  Define $w_n$ by  $(0,w_n)=\Psi_n(\xi_n)\in\V^\top$. 
  By Proposition~\ref{Ybd}, 
  $$
  |d\pi(\xi)|= |d\pi(\xi_n)|=\lv Y_{\psi_n(\th)}(-n)\,w_n\rv\ge b\;|w_n|.
  $$
  Hence $|\Psi_n(\xi_n)|=|w_n|$ is bounded on $n$.
  Therefore $\lim_n (n-s_n)=+\infty$.
  
   Let 
  $$
  \zeta_n:=\frac{\Psi_{s_n}(\xi_n)}{|\Psi_{s_n}(\xi_n)|}
  \qquad  \text{ and }\qquad
  \th_n:=\psi_{s_n}(\th).
  $$
  Then 
  $$
  |\Psi_t(\zeta_n)|\le 1\qquad\text{ for all }\quad|t|\le\min\{s_n,
  n-s_n\}.
  $$
    Since $\La$ is compact, there is a subsequence $n_k\to+\infty$
  such that the limits $\th_0:=\lim_k\th_{n_k}\in\La$ and
  $\zeta_0:=\lim_k\zeta_{n_k}\in T_{\th_0}T^*M$ exist.
  Since $\lim_n s_n=+\infty=\lim_n(n-s_n)$, we have that
  $|\zeta_0|=1$ and 
  $$
  |\Psi_t(\zeta_0)|\le 1 \qquad\text{ for all }\quad t\in\re.
  $$
  Then $\zeta_0\in\B^\top(\th_0)\setminus\0$. This contradicts
  item~\eqref{h1}.
 
  \end{proof}

 \bigskip

 \begin{Lemma}\label{EFNX}
 If $\th$ has no conjugate points
 $$
 \E(\th)\cap \F(\th)\cap \N(\th)=\{0\}
 \qquad \iff\qquad
 \E(\th)\cap\F(\th)=\langle X(\th)\rangle.
 $$
 \end{Lemma}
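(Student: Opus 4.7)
The plan is to reduce this entirely to three facts already established in Proposition~\ref{growth}: that $X(\th)\in\E(\th)\cap\F(\th)$, that $\E(\th)\cap\F(\th)\subset T_\th\Si$, and that $H_p(\th)=d\pi(X(\th))\ne 0$, together with the direct-sum decomposition $T_\th\Si=\N(\th)\oplus\langle X(\th)\rangle$ which is built into the definition of $\N(\th)$.

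For the implication $(\Leftarrow)$, I would observe that since $d\pi(X(\th))\ne 0$ while every $\xi\in\N(\th)$ satisfies $\langle d\pi(\xi),d\pi(X(\th))\rangle=0$, the line $\langle X(\th)\rangle$ meets $\N(\th)$ only at the origin; hence if $\E(\th)\cap\F(\th)=\langle X(\th)\rangle$ then intersecting with $\N(\th)$ gives $\{0\}$.

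For the implication $(\Rightarrow)$, take any $\xi\in\E(\th)\cap\F(\th)$. Since $\E(\th)\cap\F(\th)\subset T_\th\Si$ by Proposition~\ref{growth}.\eqref{g3}, I can write uniquely $\xi=\eta+a\,X(\th)$ with $\eta\in\N(\th)$ and $a\in\re$. Because $X(\th)$ itself lies in $\E(\th)\cap\F(\th)$, subtracting off $a\,X(\th)$ shows $\eta=\xi-a\,X(\th)\in\E(\th)\cap\F(\th)$. Thus $\eta\in\E(\th)\cap\F(\th)\cap\N(\th)=\{0\}$ by hypothesis, forcing $\xi=a\,X(\th)\in\langle X(\th)\rangle$. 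The reverse containment $\langle X(\th)\rangle\subset\E(\th)\cap\F(\th)$ is again Proposition~\ref{growth}.\eqref{g3}.

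There is no real obstacle here: the lemma is a bookkeeping consequence of the previously established structure. The only thing worth being careful about is to invoke explicitly the regularity of the energy level (implicit in the framework of Section~\ref{Saut}) so that Proposition~\ref{growth}.\eqref{g4} is available and guarantees $X(\th)\notin\N(\th)$; without that, the equivalence could fail.
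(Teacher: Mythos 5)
Your proof is correct and follows essentially the same route as the paper: both arguments hinge on Proposition~\ref{growth}.\eqref{g3} ($X(\th)\in\E\cap\F\subset T_\th\Si$), Proposition~\ref{growth}.\eqref{g4} ($d\pi(X(\th))\ne 0$), and the resulting decomposition $T_\th\Si=\N(\th)\oplus\langle X(\th)\rangle$, subtracting a multiple of $X(\th)$ to land in $\N(\th)$ and then invoking the hypothesis. You are slightly more explicit than the paper in spelling out $(\Leftarrow)$ and in noting that $\xi\in T_\th\Si$ is needed for the decomposition; the paper leaves these as implicit, but the substance is identical.
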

 \begin{proof}\quad

 We only prove ($\Rightarrow$). Suppose that 
 $\xi\in \E(\th)\cap \F(\th)$ and $E(\th)\cap \F(\th)\cap \N(\th)=\{0\}$.
 By \ref{growth}.\eqref{g4},
 $d\pi(X(\th))\ne 0$.
 Then there is  $\la\in\re$ satisfying 
 $$
 \langle d\pi(\xi)-\la\, d\pi(X(\th)),d\pi(X(\th))\rangle_{\pi(\th)} =0.
 $$
 Thus $\xi-\la\, X(\th)\in \N(\th)$.
By \ref{growth}.\eqref{g3}, $X(\th)\in \E(\th)\cap \F(\th)$.
Then 
$$
\xi-\la \, X(\th)\in \E(\th)\cap \F(\th)\cap \N(\th)=\{0\}.
$$
 Thus $\xi=\la\, X(\th)\in \langle X(\th)\rangle$.

 \end{proof}

 \noindent{\bf Proof of Theorem~\ref{TA}:}
 
 Let $H$ be the hamiltonian of $L$. We also denote by $\La$ the
 corresponding compact invariant subset for the hamiltonian flow.
 By lemma~\ref{I+}, the orbits in $\La$ are disconjugate.
 By \ref{growth}.\eqref{g4}, $d\pi(X(\th))\ne 0$ for all $\th\in \La$. For $\th\in\La$ write
 $$
 \cN(\th)=\{\,h\in T_\th M\;|\;\langle h,d\pi\,X(\th)\rangle =0\,\},
 $$
 Then $\cN(\th)=d\pi\,\N(\th)$. Fix $\th\in\La$. Let $h\in\cN(\th)$
 with $|h|=1$. Given $T>0$ let $(0,v_{-T})\in\V(\psi_{-T}(\th))$ and
 $(0,v_T)\in\V(\psi_T(\th))$ be such that
 $d\pi\,d\psi_T(0,v_{-T})=h=d\pi\,d\psi_{-T}(0,v_T)$. Let
 $$
 \xi_T(t):=\begin{cases}
 d\pi\,d\psi_{t+T}(0,v_{-T}) &\text{ if }\quad t\in[-T,0], \\
 d\pi\,d\psi_{t-T}(0,v_{-T}) &\text{ if }\quad t\in[0,T].
           \end{cases}
 $$
 Then $\xi_T\in\Om(\psi_{-T}(\th),2T)$.
 Also, there is $0<b<a$ such that
  \begin{equation}\label{|h|}
 \lV\xi_T\rV_{\cL^2}^2\ge b\,|h|^2\qquad \text{ for all }\quad T>1.
 \end{equation}
 Let $S_T,\,U_T:\H(\th)\to\V(\th)$ be the linear maps given by
 \begin{align*}
 E_T(\th)&=d\psi_{-T}(\V(\psi_{T}(\th)))=\graph(S_T),\\
 F_T(\th)&=d\psi_{T}(\V(\psi_{-T}(\th)))=\graph(U_T).
  \end{align*}
 From proposition~\ref{Iform} and~\eqref{|h|}, we have that
 $$
 b\,|h|^2\le dA_L(\xi,\xi)=h^*(U_T-S_T)\,h.
 $$
 Since $\lim_{T\to+\infty}S_T=S$, $\lim_{T\to+\infty}U_T=U$, where
 $\E(\th)=\graph(S)$, $\F(\th)=\graph(U)$; we have that 
 $(U-S)|_\cN\succ b\,I\succ 0$. This implies that
 $\E(\th)\cap\F(\th)\cap\N(\th)=\0$. By lemma~\ref{EFNX},
 $\E(\th)\cap\F(\th)=\langle X(\th)\rangle$ for all $\th\in\La$.
 By Theorem~\ref{TC}, $\psi_t|_\La$ is hyperbolic.
 
 \qed

  \begin{appendix}
    
  \section{Quasi Hyperbolicity}
  \label{apa}

  For completeness we prove
  
  \begin{Proposition}\label{PA1}
If $\Psi$ is a quasi-hyperbolic action over a compact base $\bB$ then:
\begin{enumerate}
\item $\Psi$ is hyperbolic over the non-wandering set $\Om(\psi|_B)$ of $\bB$. 
\item If in \eqref{Es} $\dim E^s$ is constant on all the minimal sets of $\bB$,
         then $\Psi$ is hyperbolic on all $\bE$.
\end{enumerate}
  \end{Proposition}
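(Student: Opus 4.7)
By Corollary~\ref{qhcor} it suffices in each part to verify the pointwise splitting $\bE(b)=E^s(b)+E^u(b)$ on the relevant subset of $\bB$ — the non-wandering set for (1) and all of $\bB$ for (2). Since $E^s(b)\cap E^u(b)=\{0\}$ from quasi-hyperbolicity, this reduces to the dimension identity $d(b):=\dim E^s(b)+\dim E^u(b)=\dim\bE(b)$.

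The first structural ingredient is that $E^s$ and $E^u$ have closed graph as multi-valued maps into the Grassmannian of $\bE$. Indeed, if $b_n\to b$ and $\xi_n\in E^s(b_n)$ with $\xi_n\to\xi$, then iterating Lemma~\ref{qh} gives $|\Psi_{k\tau}(\xi_n)|\le 2^{-k}|\xi_n|$ for every $k\ge 0$, and letting $n\to\infty$ using the continuity of $\Psi$ yields $\xi\in E^s(b)$; analogously for $E^u$. Consequently $\dim E^s$ and $\dim E^u$ are upper semi-continuous integer-valued functions on $\bB$, $d$ is upper semi-continuous, and the set $\cH:=\{b\in\bB:d(b)=\dim\bE(b)\}$ is a closed subset of $\bB$. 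Moreover $\cH$ is $\psi$-invariant, because $\Psi_t$ maps $E^s(b)$ onto $E^s(\psi_t(b))$ and similarly for $E^u$.

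For part (1), I would argue by contradiction. If $b\in\Omega(\psi)\setminus\cH$, pick $v\in\bE(b)\setminus(E^s(b)+E^u(b))$; then both $\sup_{t>0}|\Psi_t v|$ and $\sup_{t>0}|\Psi_{-t}v|$ are infinite. Using the non-wandering hypothesis, choose $c_n\to b$ and $\ell_n\to+\infty$ with $\psi_{\ell_n}(c_n)\to b$, lift $v$ via local trivializations to $v_n\in\bE(c_n)$ with $v_n\to v$, and examine the finite orbit segments $\{\Psi_t(v_n)\}_{t\in[0,\ell_n]}$. Imitating the renormalization from the proof of Lemma~\ref{qh}, select recentering times $s_n\in(0,\ell_n)$ near a maximum of $|\Psi_t(v_n)|$ so that $|\Psi_t(v_n)|/|\Psi_{s_n}(v_n)|$ stays bounded on an interval around $s_n$ of length tending to $+\infty$ in both directions; set $w_n:=\Psi_{s_n}(v_n)/|\Psi_{s_n}(v_n)|$ and extract a limit $w\in\bE(b_\infty)$ with $|w|=1$ by compactness of the unit sphere bundle of $\bE|_\bB$. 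The estimates pass to the limit and yield $\sup_{t\in\re}|\Psi_t(w)|<\infty$, contradicting quasi-hyperbolicity.

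For part (2), (1) already gives $\cH\supset\Omega(\psi)$, so $\cH$ contains every minimal set of $\psi$; combining the hypothesis $\dim E^s\equiv k$ on minimal sets with $d\equiv\dim\bE$ there shows $\dim E^u\equiv\dim\bE-k$ on minimal sets as well. For an arbitrary $b\in\bB$, the $\omega$- and $\alpha$-limit sets each contain a minimal set, and choosing $t_n^{\pm}\to\pm\infty$ with $\psi_{t_n^{\pm}}(b)$ landing on them, the $\psi$-invariance of $\dim E^s$ along the orbit together with the Grassmannian closed-graph property bounds $\dim E^s(b)\le k$ and $\dim E^u(b)\le\dim\bE-k$. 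For the reverse inequalities — which then force $d(b)=\dim\bE$ and thus $\cH=\bB$ — I would invoke the cited results of Sacker--Sell, Churchill--Selgrade and Ma\~n\'e: constancy of $\dim E^s$ on every minimal set, combined with quasi-hyperbolicity, pins down $E^s$ as a continuous subbundle of constant rank on each chain-recurrent component and propagates the dimensions throughout $\bB$ by invariance. The main obstacle is the renormalization step in (1), where only one-sided returns are available from the non-wandering condition yet one must produce a bilaterally bounded limit orbit; correctly aligning the recentering time $s_n$ and diagonally extracting a subsequence using compactness of $\bB$ is the delicate point of the argument.
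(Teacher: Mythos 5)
Your reduction to showing $\bE=E^s+E^u$ via Corollary~\ref{qhcor}, and your observation that $\dim E^s$, $\dim E^u$ are upper semicontinuous (via Lemma~\ref{qh} and a closed-graph argument), both match the paper. The gaps are in the two places where you actually have to produce a lower bound on $\dim E^s + \dim E^u$.

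For part (1), the single-vector renormalization you sketch does not close. Starting from $v\notin E^s(b)+E^u(b)$, lifting to $v_n\in\bE(c_n)$ along a non-wandering return $\psi_{\ell_n}(c_n)\to b$, and recentering at a near-maximum $s_n$ of $|\Psi_t(v_n)|$ on $[0,\ell_n]$, one indeed gets $s_n\to\infty$, but there is no mechanism forcing $\ell_n-s_n\to\infty$: the maximum may sit at (or near) the right endpoint, and then the rescaled orbit segment is bounded only on an interval whose forward length stays finite, so no bilaterally bounded limit emerges. You correctly flag this as "the delicate point", but it is not merely delicate; the approach as stated is not enough. The paper avoids this entirely by working at the level of subspaces rather than a single vector: it chooses complements $E_n\subset\bE(x_n)$ with $\dim E_n=\dim\bE-\dim E^s(x)$ and $\lim E_n\oplus E^s(x)=\bE(x)$, then uses Lemma~\ref{33} (the a priori estimate $|\Psi_t(v)|\le K(|v|+|\Psi_s(v)|)$, $0\le t\le s$, which you do not invoke) to get a uniform bound on $\lV\Psi_{-t}|_{\Psi_{s_n}(E_n)}\rV$ for $0\le t\le s_n$, so that $\limsup_n\Psi_{s_n}(E_n)\subset E^u(x)$ and $\dim E^u(x)\ge\dim\bE-\dim E^s(x)$. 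The dimension count is carried by the subspace, not by one orbit; that is the missing idea.

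For part (2), what you establish ($\dim E^s(b)\le k$ and $\dim E^u(b)\le\dim\bE-k$ from semicontinuity) only gives $d(b)\le\dim\bE$, which is trivial. The "reverse inequalities" are exactly the substance, and you defer them to the literature. The paper proves them as a lemma (essentially the Sacker--Sell dimension inequality): for $a\in\alpha(x)$ and $b\in\omega(x)$, $\dim E^u(b)\ge\dim\bE-\dim E^s(x)$ and $\dim E^s(a)\ge\dim\bE-\dim E^u(x)$, proved again with the subspace/complement argument and Lemma~\ref{33}. Combined with part (1), Corollary~\ref{CA5}, and the hypothesis that $\dim E^s$ is constant on minimal sets, this pins down $\dim E^s(x)=\dim E^s(b)$, $\dim E^u(x)=\dim E^u(a)$, and closes the chain. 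Without an argument for the Sacker--Sell inequality your part (2) is a citation, not a proof. In short, both halves need the complement-subspace construction plus the a priori estimate of Lemma~\ref{33}, which is the engine of the paper's argument and is absent from your proposal.
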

  
  \begin{Lemma}\label{33}
 \cite[L.~3.3, p.~928]{CI}.
  
  There exists $K>0$ such that for all $x\in \bB$ and $v\in\bE(x)$,
  $$
  \lv\Psi_t(v)\rv\le K(|v|+|\Psi_s(v)|)
  \quad \text{for all }
  0\le t\le s.
  $$
  \end{Lemma}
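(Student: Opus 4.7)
The plan is to argue by contradiction, exploiting quasi-hyperbolicity at an extremal point of the orbit. Suppose no such $K$ exists. Then there exist sequences $x_n\in\bB$, $v_n\in\bE(x_n)$ and $0\le t_n\le s_n$ with
$$
\lv\Psi_{t_n}(v_n)\rv \;>\; n\big(\lv v_n\rv+\lv\Psi_{s_n}(v_n)\rv\big).
$$
Instead of using $t_n$ directly, choose $T_n\in[0,s_n]$ where the function $t\mapsto\lv\Psi_t(v_n)\rv$ attains its maximum on $[0,s_n]$. Then $\lv\Psi_{T_n}(v_n)\rv\ge\lv\Psi_{t_n}(v_n)\rv$, so the same blow-up holds with $T_n$ in place of $t_n$. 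Rescale $v_n$ so that $\lv\Psi_{T_n}(v_n)\rv=1$. Setting $u_n:=\Psi_{T_n}(v_n)\in\bE(y_n)$ with $y_n:=\psi_{T_n}(x_n)$, we obtain
$$
\lv u_n\rv=1,\qquad \lv v_n\rv+\lv\Psi_{s_n}(v_n)\rv<\tfrac1n,\qquad \lv\Psi_\tau(u_n)\rv\le 1\ \text{ for all }\tau\in[-T_n,\,s_n-T_n].
$$
The last property, which is the crucial gain from using the maximum, is what will let us produce a uniformly bounded nonzero orbit in the limit.

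Next I would rule out the degenerate cases. Since $\Psi$ is a continuous $\re$-action and $\bB$ is compact, for every bounded interval $[-\sigma,\sigma]$ there is a constant $C(\sigma)$ with $C(\sigma)^{-1}\lv w\rv\le\lv\Psi_\tau(w)\rv\le C(\sigma)\lv w\rv$ for all $\lv\tau\rv\le\sigma$, $w\in\bE$. If $T_n$ stayed bounded, then $1=\lv\Psi_{T_n}(v_n)\rv\le C\lv v_n\rv\to 0$, impossible; if $s_n-T_n$ stayed bounded, then $\lv\Psi_{s_n}(v_n)\rv=\lv\Psi_{s_n-T_n}(u_n)\rv\ge C^{-1}\lv u_n\rv=C^{-1}$, again impossible. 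Therefore $T_n\to+\infty$ and $s_n-T_n\to+\infty$.

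Finally, compactness of $\bB$ and continuity of the bundle give a subsequence along which $y_n\to y\in\bB$ and $u_n\to u\in\bE(y)$ (in a local trivialization around $y$), with $\lv u\rv=1$. For any fixed $\tau\in\re$, eventually $\tau\in[-T_n,s_n-T_n]$ so $\lv\Psi_\tau(u_n)\rv\le 1$; by continuity of $\Psi_\tau$ we conclude $\lv\Psi_\tau(u)\rv\le 1$ for every $\tau\in\re$. This contradicts the quasi-hyperbolicity of $\Psi$, which requires $\sup_{\tau\in\re}\lv\Psi_\tau(u)\rv=+\infty$ for the nonzero vector $u$. The main conceptual step is the choice of $T_n$ at the maximum, which upgrades a bound at two endpoints into a uniform bound on the whole interval; the rest is a standard compactness/normal-families argument.
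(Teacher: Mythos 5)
Your proposal is correct and follows the same approach as the paper's proof: pick the maximizer $T_n$ of $t\mapsto|\Psi_t(v_n)|$ on $[0,s_n]$, normalize $u_n=\Psi_{T_n}(v_n)$, observe $|\Psi_\tau(u_n)|\le 1$ on $[-T_n,s_n-T_n]$, show both gaps diverge, and pass to a limit to contradict quasi-hyperbolicity. You actually spell out the argument for $T_n\to\infty$ and $s_n-T_n\to\infty$ (via uniform bounds on $\Psi_\tau$ over bounded time intervals) that the paper only asserts, so the write-up is, if anything, slightly more complete.
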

  
  \begin{proof}
  Suppose it is false.
  Then there exist $x_n\in \bB$, $0\ne v_n\in \bE(x_n)$  and $ 0\le t_n\le s_n$, such that 
  $$
  \lv\Psi_{t_n}(v_n)\rv
  \ge n \, (|v_n|+|\Psi_{s_n}(v_n)|).
  $$
  Then $t_n\to+\infty$ and $s_n-t_n\to+\infty$.
  We can assume that $|\Psi_{t_n}(v_n)|=\sup\limits_{0\le t\le s_n}|\Psi_t(v_n)|$.
  Let
  $$
  w_n:=\frac{\Psi_{t_n}(v_n)}{|\Psi_{t_n}(v_n)|}.
  $$
  For $-t_n<t<s_n-t_n$ we have that 
  $$
 | \Psi_t(w_n)|=\frac{|\Psi_{t+t_n}(v_n)|}{|\Psi_{t_n}(v_n)|}
 \le \frac1{|\Psi_{t_n}(v_n)|} \sup_{0\le t\le s_n}|\Psi_t(v_n)|=1.
  $$
  Taking a subsequence if necessary, we can assume that 
  $x_n\to x\in \bB$, $w_n\to w\in\bE(x)$.
  Then $|\Psi_t(w_n)|\le 1$ for all $t\in\re$ with $|w|=1$.
  This contradicts the quasi-hyperbolicity of $\Psi$.
  
  \end{proof}

  For $x\in \bB$, let $\a(x)$ and $\om(x)$ be the $\a$-limit of $x$ and the $\om$-limit of 
  $x$ respectively:
  \begin{align*}
  \a(x):&=\{\; \lim_n \psi_{s_n}(x)\;|\; s_n\to -\infty\;\},
  \\
  \om(x):&=\{\; \lim_n \psi_{t_n}(x)\;|\; t_n\to +\infty\;\}.
  \end{align*}

  The following Lemma appears in Sacker and Sell II \cite{SaSe}, Lemma~8, p. 485.
  \begin{Lemma}
  If $x\in \bB$, $a\in \a(x)$, $b\in \om(x)$ then
  \begin{align}
  \dim E^u(b) &\ge \dim \bE - \dim E^s (x),
  \label{dub}\\
  \dim E^s(a) &\ge \dim \bE - \dim E^u(x).
  \label{dsb}
  \end{align}

  \end{Lemma}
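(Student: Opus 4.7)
The plan is to prove the first inequality \eqref{dub} by constructing, inside $\bE(b)$, a subspace of dimension $k := \dim\bE - \dim E^s(x)$ that sits entirely inside $E^u(b)$; the second inequality \eqref{dsb} would follow by applying the very same argument to the time-reversed action $\Psi_{-t}$, which is again quasi-hyperbolic and for which Lemma~\ref{33} has an obvious analogue on intervals $[s,0]$ with $s\le 0$.

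Fix a linear complement $W$ of $E^s(x)$ in $\bE(x)$, so $\dim W = k$ and $W\cap E^s(x)=\0$, and choose a sequence $t_n\to+\infty$ with $\psi_{t_n}(x)\to b$. I would look at the $k$-dimensional images $F_n:=\Psi_{t_n}(W)\subset \bE(\psi_{t_n}(x))$ and, passing to a subsequence, let $F\subset \bE(b)$ be the Grassmannian limit. The main obstacle is to ensure that (a) $F$ is still $k$-dimensional and (b) $F\subset E^u(b)$; both reduce to a uniform lower bound on the smallest singular value of $T_n:=\Psi_{t_n}|_W$. Here is where quasi-hyperbolicity enters: if there were unit $v_n\in W$ with $|\Psi_{t_n}(v_n)|\to 0$, then Lemma~\ref{33} applied on $[0,t_n]$ would give $|\Psi_s(v_n)|\le K(1+|\Psi_{t_n}(v_n)|)$ for all $0\le s\le t_n$; extracting a subsequential limit $v_n\to v\in W$ with $|v|=1$, continuity in the fiber would yield $\sup_{s\ge 0}|\Psi_s(v)|<\infty$, so $v\in E^s(x)\cap W=\0$, contradicting $|v|=1$. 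Thus there exists $\delta>0$ with $\sigma_k(T_n)\ge\delta$ for all $n$.

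With this uniform bound, every unit $u\in F$ is the limit of vectors $u_n\in F_n$ of the form $u_n=\Psi_{t_n}(v_n)$ with $|v_n|\le 2/\delta$. Then, for each fixed $s>0$ and $n$ large enough that $t_n>s$, Lemma~\ref{33} gives
$$|\Psi_{-s}(u_n)|=|\Psi_{t_n-s}(v_n)|\le K(|v_n|+|\Psi_{t_n}(v_n)|)\le K(2/\delta+|u_n|),$$
uniformly in $n$. Passing to the limit yields $\sup_{s\ge 0}|\Psi_{-s}(u)|<\infty$, i.e.\ $u\in E^u(b)$. Hence $F\subset E^u(b)$ with $\dim F=k$, proving \eqref{dub}; the symmetric argument for the time-reversed flow gives \eqref{dsb}. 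The decisive technical point, and essentially the only nontrivial one, is the singular-value lower bound, which is exactly the step that converts the linear-algebraic hope ``$\Psi_{t_n}(W)$ stays $k$-dimensional'' into the dynamical statement ``its preimages are controlled,'' and then combines with Lemma~\ref{33} to drag the whole subspace into $E^u(b)$.
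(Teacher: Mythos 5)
Your proof is correct and follows essentially the same route as the paper: fix a complement of $E^s(x)$, push it forward by $\Psi_{t_n}$, pass to a Grassmannian limit $F\subset\bE(b)$, and combine Lemma~\ref{33} with quasi-hyperbolicity (through $W\cap E^s(x)=\0$) to conclude $F\subset E^u(b)$. The only difference is cosmetic --- you extract a uniform lower bound on the singular values of $\Psi_{t_n}|_W$, whereas the paper works per-vector and shows the stronger statement $|\Psi_{-s_n}(w_n)|\to 0$ before applying Lemma~\ref{33}; boundedness of the preimages is all either argument actually needs, and your reorganization is sound.
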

  
  \begin{proof}
  We only prove \eqref{dub}, inequality \eqref{dsb} is similar.
  Let $F\subset \bE(x)$ be a subspace such that $F\oplus E^s(x) = \bE(x)$.
  Let $s_n\ge 0$ be such that $\lim_n \psi_{s_n}(x) = b$ and $\lim_n s_n =+\infty$.
  Taking a subsequence we can assume that the limit $\F:=\lim \Psi_{s_n}(F)\subset \bE(b)$
  exists. 
  We will prove that $\F\subset E^u(b)$.
  Since $\dim\F=\dim F= \dim\bE -\dim E^s(x)$, we obtain \eqref{dub}.
  
  Let $w\in \F$ with $|w|=1$. Let $w_n\in \psi_{s_n}(F)$ with $|w_n|=1$ be such that 
  $\lim_n w_n =w$. By Lemma~\ref{33} we have that
  \begin{equation}\label{eq35}
  \big|\Psi_{t-s_n}(w_n)\big| \le K\;\big( |\Psi_{-s_n}(w_n)| + |w_n| \big)
  \qquad \text{ for all }\quad 0\le t \le s_n.
  \end{equation}
  Let
  $$
  v_n:=\frac{\Psi_{-s_n}(w_n)}{|\Psi_{-s_n}(w_n)|} \in F.
  $$
  Then
  \begin{equation}\label{eq36}
  \big|\Psi_{t}(v_n)\big|
  \le K \left(|v_n|+\frac{|w_n|}{|\Psi_{-s_n}(w_n)|}\right)
  \qquad\text{for all } \quad 0\le t\le s_n.
  \end{equation}
  Take a subsequence such that $v=\lim_n v_n\in F\setminus\{0\}$ exists, $|v|=1$.
  We claim that 
  \begin{equation}\label{lpsn}
  \lim_n |\Psi_{-s_n}(w_n)|=0,
  \end{equation}
  because if $\liminf_n|\Psi_{-s_n}(w_n)|\ge A>0$, from \eqref{eq36} we would have
  that 
  $$
  |\Psi_t(v)| \le K \big( 1+\tfrac 1A\big) \qquad \text{for all }\quad t\ge 0.
  $$
  Then $v\in E^s(x)$, which contradicts $v\in F\setminus\{0\}$.
  
  From \eqref{lpsn} and \eqref{eq35}
  $$
  |\Psi_{-r}(w_n)| \le 2 K \qquad \text{for all }\quad 0\le r\le s_n.
  $$
  This implies that $w=\lim_n w_n\in E^u(b)$.
  Therefore $\F\subset E^u(b)$.
    
  \end{proof}

\begin{Lemma}\label{lem_a4}
There are $A>0$ and $a>0$ such that 
\begin{align*}
|\Psi_t(v)|&\le A\,|v|\, \exp(-at) \qquad \text{for all } x\in B,\; v\in E^s(x) \text{ and } t\ge 0,
\\
|\Psi_{-t}(v)|&\le A\,|v|\, \exp(-at) \qquad \text{for all } x\in B,\; v\in E^u(x) \text{ and } t\ge 0.
\end{align*}
 
\end{Lemma}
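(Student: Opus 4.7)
\noindent{\bf Proof plan:}\quad

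The plan is to combine the uniform discrete-time contraction from Lemma~\ref{qh} with the growth control in Lemma~\ref{33} to upgrade to uniform exponential decay in continuous time. First I would note that $E^s$ is $\Psi$-invariant, since if $v\in E^s(x)$ then $\lv\Psi_t(\Psi_s v)\rv=\lv\Psi_{t+s}v\rv$ is bounded for $t>0$ (for $s<0$, using continuity of $\Psi$ on the compact interval $[s,0]$ to absorb the negative part); the same argument works for $E^u$. This invariance is what allows the discrete contraction of Lemma~\ref{qh} to be iterated along orbits.

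For the stable estimate, fix $\tau$ from Lemma~\ref{qh}, so that $\lV \Psi_\tau|_{E^s(y)}\rV<\tfrac12$ uniformly in $y\in\bB$. Iterating along the orbit of $x$ yields $\lv\Psi_{n\tau}(v)\rv<2^{-n}\lv v\rv$ for $v\in E^s(x)$ and $n\ge 0$. To interpolate for general $t\ge 0$, I would write $t=n\tau+r$ with $0\le r<\tau$, set $v':=\Psi_{n\tau}(v)\in E^s(\psi_{n\tau}x)$, and apply Lemma~\ref{33} at base point $\psi_{n\tau}(x)$ with the pair $(r,\tau)$:
$$\lv\Psi_t(v)\rv=\lv\Psi_r(v')\rv\le K\bigl(\lv v'\rv+\lv\Psi_\tau(v')\rv\bigr)<\tfrac{3K}{2}\cdot 2^{-n}\lv v\rv.$$
Since $n> t/\tau -1$, this gives $\lv\Psi_t(v)\rv\le 3K\,e^{-at}\lv v\rv$ with $a:=(\log 2)/\tau$, so the constants $A:=3K$ and $a$ do the job.

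For $E^u$, iteration of Lemma~\ref{qh} gives $\lv\Psi_{-n\tau}(w)\rv<2^{-n}\lv w\rv$ for $w\in E^u(x)$. Since Lemma~\ref{33} is stated only for forward time, the interpolation requires a shift: for $t=n\tau+r$ I would apply the lemma at base point $\psi_{-(n+1)\tau}(x)$ to the vector $\Psi_{-(n+1)\tau}(w)$, with $(t,s)=(\tau-r,\tau)$, so that $\Psi_{\tau-r}$ recovers $\Psi_{-t}(w)$ while $\Psi_\tau$ recovers $\Psi_{-n\tau}(w)$. This reproduces the same $\tfrac{3K}{2}\cdot 2^{-n}$ bound, with the same $A$ and $a$. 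The only real subtlety is this re-centering in the backwards direction; otherwise everything is a direct combination of the two lemmas above.
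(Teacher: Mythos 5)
Your proof is correct and follows the same iterate-then-interpolate architecture as the paper: Lemma~\ref{qh} gives the $2^{-n}$ decay at multiples of $\tau$, and a bounded factor absorbs the fractional part. The one inessential difference is that the paper bounds the interpolation factor directly by $B:=\sup_{|t|\le\tau}\lV\Psi_t\rV$ (finite by continuity of $\Psi$ and compactness of $\bB$), whereas you reach for Lemma~\ref{33}, a heavier tool than is needed for this step, and consequently have to re-center in the backward direction, a maneuver the direct $\sup$ bound sidesteps.
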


\begin{proof}
 From Lemma~\ref{qh} we have that 
if $0\le s\le \tau$ and $n\in \na$,
$$
\big\Vert\psi_{n\tau+s}\big |_{E^s(b)}\big\Vert \le B\, 2^{-n},
\qquad
\big\Vert\psi_{-n\tau-s}\big |_{E^u(b)}\big\Vert \le B\, 2^{-n},
$$
where  $B:=\sup_{|t|\le \tau} \lV \Psi_t\rV$.
These imply Lemma~\ref{lem_a4}.
\end{proof}

\begin{Corollary}\label{CA5}
The functions $\dim E^s(x)$ and $\dim E^u(x)$ on $B$ are upper semicontinuous.

Also if $a\in\a(x)\cup\om(x)$ then $\dim E^s(a) \ge \dim E^s(x)$
and  $\dim E^u(a) \ge \dim E^u(x)$.
\end{Corollary}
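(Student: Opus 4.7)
The plan is to exploit the uniform exponential decay of Lemma~\ref{lem_a4} to show that the set-valued maps $x\mapsto E^s(x)$ and $x\mapsto E^u(x)$ have closed graphs. Upper semicontinuity of their dimensions then follows from a standard orthonormal-basis compactness argument, and the $\a,\om$-limit statement is an immediate consequence of the full $\Psi_t$-invariance of $E^s$ and $E^u$.

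\emph{Closed graph of $E^s$.} Suppose $x_n\to x$ in $\bB$ and $v_n\in E^s(x_n)$ with $v_n\to v\in\bE(x)$. Lemma~\ref{lem_a4} gives
$$
|\Psi_t(v_n)|\le A\,|v_n|\,\exp(-a\,t)\qquad\text{for all } t\ge 0.
$$
Passing to the limit using the joint continuity of $(x,v)\mapsto \Psi_t(x,v)$ yields $|\Psi_t(v)|\le A\,|v|\,\exp(-a\,t)$ for every $t\ge 0$; in particular $\sup_{t>0}|\Psi_t(v)|<\infty$, so $v\in E^s(x)$. The analogous argument, using the bound for $t\ge 0$ applied to $\Psi_{-t}$, works for $E^u$.

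\emph{Upper semicontinuity of the dimensions.} Fix $x_n\to x$ in $\bB$ and set $d:=\limsup_n \dim E^s(x_n)$. After passing to a subsequence we may assume $\dim E^s(x_n)=d$ for every $n$, and choose orthonormal bases $v_1^n,\ldots,v_d^n$ of $E^s(x_n)$. Because the unit sphere bundle of $\bE$ over the compact base $\bB$ is compact, a further subsequence gives limits $v_i^n\to v_i\in\bE(x)$ and the family $v_1,\ldots,v_d$ remains orthonormal in $\bE(x)$. By the closed-graph step each $v_i\in E^s(x)$, hence $\dim E^s(x)\ge d$. The same argument works for $E^u$.

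\emph{Behaviour under $\a$ and $\om$.} Since $\Psi_t$ is an $\re$-action by linear isomorphisms, the condition $\sup_{t>0}|\Psi_t(v)|<\infty$ is preserved by every shift $v\mapsto\Psi_{t_0}(v)$: for $t_0\ge 0$ this is immediate, and for $t_0<0$ the extra compact time interval $[t_0,0]$ contributes a finite supremum by continuity of $\Psi$. Hence $\Psi_t(E^s(x))=E^s(\psi_t(x))$, so $\dim E^s$ is constant along every orbit of $\psi_t$; an identical remark applies to $E^u$. If $a=\lim_n \psi_{t_n}(x)$ with $t_n\to\pm\infty$, upper semicontinuity at $a$ combined with this constancy yields
$$
\dim E^s(x)=\dim E^s(\psi_{t_n}(x))\le\dim E^s(a)
$$
for all sufficiently large $n$, and the corresponding bound for $E^u$ follows in the same way. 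The only mildly subtle point is the full (two-sided) $\Psi_t$-invariance of $E^s$ and $E^u$; otherwise the proof is a direct compactness consequence of the uniform decay in Lemma~\ref{lem_a4} together with compactness of $\bB$.
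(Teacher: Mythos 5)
Your proof is correct and follows essentially the same route as the paper. The paper's proof is a terse three sentences: Lemma~\ref{lem_a4} gives $\limsup_n E^s(x_n)\subset E^s(x)$ (which is exactly your closed-graph step), hence $\dim E^s(x)\ge\limsup_n\dim E^s(x_n)$ (your orthonormal-basis compactness step, left implicit in the paper), and for $a\in\a(x)\cup\om(x)$ one applies this to $x_n:=\psi_{s_n}(x)$ (tacitly using the $\Psi_t$-invariance of $E^s,E^u$ that you spell out). You have merely unpacked what the paper leaves to the reader; nothing is added or changed in substance.
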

\begin{proof}\quad 

From Lemma~\ref{lem_a4}  if $\lim_n x_n=x$ then $\limsup_n E^s(x_n)\subset E^s(x)$ and
$\limsup_n E^u(x_n)\subset E^u(x)$.  Hence $\dim E^s(x)\ge \limsup_n \dim E^s(x_n)$
and  $\dim E^u(x)\ge \limsup_n \dim E^u(x_n)$.
If $a\in \a(x)\cup \om(x)$, take $|s_n|\to \infty$ such that $\lim_n \psi_{s_n}(x)=a$ and 
apply the previous argument to $x_n:=\psi_{s_n}(x)$.
\end{proof}

{\bf Proof of Proposition~\ref{PA1}:}
  
  By Corollary~\ref{qhcor} to prove the hyperbolicity of $\Psi$
  it is enough to prove that $\bE=E^s+E^u$.

    (i). \cite[p. 929]{CI}.
    Given $x\in\Om(\psi|_\bB)$ there are $x_n\to x$, $s_n\to+\infty$ with
    $\psi_{s_n}(x_n)\to x$. Let $E_n$ be a subspace of $\bE(x_n)$ such that 
    $$
    \dim E_n= \dim\bE(x)-\dim E^s(x),
    \qquad
    E^s(x)\oplus\lim_n E_n=\bE(x).
    $$
    We claim that 
    \begin{equation}\label{cCt}
    \exists C>0 \quad \forall t\in[0,s_n]\qquad
    \lV \Psi_{-t}|_{\Psi_{s_n}(E_n)}\rV\le C.
    \end{equation}
    Using~\eqref{cCt},
    $\limsup_n\Psi_{s_n}(E_n)\subset E^u (x)$.
    Then $\dim E^u(x)+\dim E^s(x)\ge \dim E_n + \dim E^s(x)=\dim \bE(x)$.
    Thus $\bE(x)=E^s(x)+E^u(x)$.
    
    Now we prove~\eqref{cCt}.
    Suppose that~\eqref{cCt} is false. Then
    $$
    \forall n\in\na \quad \exists v_n\in \Psi_{s_n}(E_n) \quad \exists t_n\in[0,s_n]\qquad
    |v_n|= 1 \qquad |\Psi_{-t_n}(v_n)|\ge n.
    $$
    By lemma~\ref{33},
    \begin{gather*}
    n\le |\Psi_{-t_n}(v_n)|\le K\,(|\Psi_{-s_n}(v_n)|+|v_n|).
    \end{gather*}
    Then 
    $|\Psi_{-s_n}(v_n)|\ge \tfrac{n-K}K$.
    Also from lemma~\ref{33},
    $$
    \forall t\in[0,s_n]\qquad
    \frac{|\Psi_t(\Psi_{-s_n}(v_n))|}{|\Psi_{-s_n}(v_n)|}
    \le K + \frac{K}{|\Psi_{-s_n}(v_n)|}.
    $$
    Let $w_n:=\Psi_{-s_n}(v_n)/|\Psi_{-s_n}(v_n)|\in E_n$.
    The estimates above give
    $$
    \forall t\in [0,s_n]\qquad
    |\Psi_t(w_n)|\le K+ \tfrac{K^2}{n-K}.
    $$
    If $w_n\to w$, then $|w|=1$, $w\in\lim_n E_n$. Thus $w\notin E^s(x)$.
    But $|\Psi_t(w)|\le K$ for all $t\ge 0$.
    This is a contradiction.

    (ii). Let $x\in B$ and $a\in\a(x)$, $b\in\om(x)$ be such that $a$ and $b$ are in minimal sets.
    By item \ref{PA1}.(i) and \eqref{dub}, we have that
    $$
    \dim \bE -\dim E^s(b)=\dim E^u(b) \ge \dim\bE-\dim E^s(x).
    $$
    Thus $\dim E^s(x)\ge \dim E^s(b)$. By Corollary~\ref{CA5}
    $\dim E^s(b)\ge \dim E^s(x)$. Therefore
    $$
    \dim E^s(b)=\dim E^s(x).
    $$
    Similarly
       $$
    \dim E^u(a)=\dim E^u(x).
    $$
    By the hypothesis on the minimal sets in item (ii), 
    $\dim E^u(a) = \dim E^u(b)$.
    By item~\ref{PA1}.(i) $\dim E^u(b) = \dim\bE-\dim E^s(b)$.
    Therefore
    $$
    \dim E^u (x) = \dim E^u (a) =\dim E^u(b) =\dim \bE-\dim E^s(b) = \dim \bE -\dim E^s (x).
    $$
    Since by the quasi-hyperbolicity $E^s(x) \cap E^u(x)=\{0\}$,
    we obtain that 
    $\bE(x)=E^s (x) + E^u (x)$ for all $x\in B$.
    Then Corollary~\ref{qhcor} implies that $\Psi|_B$ is hyperbolic.

    \qed

  \end{appendix}


\begin{thebibliography}{1}

\bibitem{Arnaud4}
Marie-Claude Arnaud, \emph{The link between the shape of the {A}ubry-{M}ather
  sets and their {L}yapunov exponents}, Ann. of Math. \textbf{174} (2011),
  no.~3, 1571--1601.

\bibitem{Churchill-Selgrade}
R.~C. Churchill, John Franke, and James Selgrade, \emph{A geometric criterion
  for hyperbolicity of flows}, Proc. Amer. Math. Soc. \textbf{62} (1976),
  no.~1, 137--143 (1977).

\bibitem{CI}
Gonzalo Contreras and Renato Iturriaga, \emph{Convex {H}amiltonians without
  conjugate points}, Ergodic Theory Dynam. Systems \textbf{19} (1999), no.~4,
  901--952.

\bibitem{FraRo}
John Franks and Clark Robinson, \emph{A quasi-{A}nosov diffeomorphism that is
  not {A}nosov}, Trans. Amer. Math. Soc. \textbf{223} (1976), 267--278.

\bibitem{JNO}
Russell Johnson, Sylvia Novo, and Rafael Obaya, \emph{An ergodic and
  topological approach to disconjugate linear {H}amiltonian systems}, Illinois
  J. Math. \textbf{45} (2001), no.~3, 803--822.

\bibitem{Ma10}
Ricardo Ma\~n{\'e}, \emph{Quasi-{A}nosov diffeomorphisms and hyperbolic
  manifolds}, Trans. Amer. Math. Soc. \textbf{229} (1977), 351--370.

\bibitem{Ma6}
\bysame, \emph{Generic properties and problems of minimizing measures of
  {L}agrangian systems}, Nonlinearity \textbf{9} (1996), no.~2, 273--310.

\bibitem{Ro4}
Clark Robinson, \emph{A quasi-{A}nosov flow that is not {A}nosov}, Indiana
  Univ. Math. J. \textbf{25} (1976), no.~8, 763--767.

\bibitem{SaSe}
Robert~J. Sacker and George~R. Sell, \emph{Existence of dichotomies and
  invariant splittings for linear differential systems. {I}, {II}, {III}}, J.
  Differential Equations \textbf{15} (1974), 429--458, {\bf 22} (1976),
  478--496; {\bf 22} (1976), 497--522.

\end{thebibliography}

\def\cprime{$'$} \def\cprime{$'$} \def\cprime{$'$} \def\cprime{$'$}
\providecommand{\bysame}{\leavevmode\hbox to3em{\hrulefill}\thinspace}
\providecommand{\MR}{\relax\ifhmode\unskip\space\fi MR }
\providecommand{\MRhref}[2]{%
  \href{http://www.ams.org/mathscinet-getitem?mr=#1}{#2}
}
\providecommand{\href}[2]{#2}

\end{document}